\newcommand\numberthis{\addtocounter{equation}{1}\tag{\theequation}}
\declaretheorem[style = plain, numberwithin = section]{theorem}
\declaretheorem[style = plain,      sibling = theorem]{corollary}
\declaretheorem[style = plain,      sibling = theorem]{lemma}
\declaretheorem[style = plain,      sibling = theorem]{proposition}
\declaretheorem[style = definition, sibling = theorem]{definition}
\declaretheorem[style = definition, sibling = theorem]{example}
\declaretheorem[style = remark,     sibling = theorem]{remark}
\numberwithin{equation}{section}
\DeclareMathOperator{\vol}{vol}
\DeclareMathOperator{\spn}{span}
\newcommand{\B}{\mathcal{B}}
\newcommand{\N}{\mathbb{N}}   % Natural numbers
\newcommand{\Z}{\mathbb{Z}}   % Integers
\newcommand{\R}{\mathbb{R}}   % Real numbers
\newcommand{\C}{\mathbb{C}}   % Complex numbers
\newcommand{\T}{\mathbb{T}}   % Circle group
\def\lhs#1#2{{_\bullet\!}\langle #1,#2\rangle}
\DeclareMathOperator{\Img}{Im}
\newcommand{\E}{\mathcal{E}}
\newcommand{\F}{\mathcal{F}}
\newcommand{\lfb}{C_1}
\newcommand{\ufb}{C_2}
\newcommand{\veco}{f}
\newcommand{\vect}{g}
\DeclareMathOperator{\vN}{L}
\newcommand{\Hi}{\mathcal{H}}
\newcommand{\Hip}{\mathcal{H}_{\pi}}
\newcommand{\Hpi}{\mathcal{H}_{\pi}}
\newcommand{\Hik}{\Hpi^{1,{\alpha}}}
\newcommand{\pker}{P_{\pi}}
\newcommand{\Hs}{\Hpi^{\infty}}
\newcommand{\SIP}{SI/\pker}
\def\module#1#2{\mathcal{E}_{#1,#2}}
\DeclareMathOperator{\id}{id}
\newcommand{\gramian}{\mathscr{G}}
\newcommand{\synthesis}{\mathscr{D}}
\newcommand{\frameop}{\mathscr{S}}
\newcommand{\analysis}{\mathscr{C}}
\title[Lattice orbits of nilpotent groups and strict comparison]{Smooth lattice orbits of nilpotent groups and strict comparison of projections}
\author{Erik Bédos}
\address{Department of Mathematics, University of Oslo, PB1053 Blindern, 0316 Oslo, Norway.}
\email{bedos@math.uio.no}
\author{Ulrik Enstad}
\address{Department of Mathematics,
Stockholm University,
SE-106 91 Stockholm, Sweden.}
\email{ulrik.enstad@math.su.se}
\author{Jordy Timo van Velthoven}
\address{Delft University of Technology,
Faculty EECMS/DIAM,
Mekelweg 4, Building 36,
2628 CD Delft, The Netherlands.}
\email{j.t.vanvelthoven@tudelft.nl}
\subjclass[2020]{22D25, 22E27, 42C30, 42C40, 46L08, 46L35}
\keywords{Decomposition rank, Frame, Lattice, Nilpotent Lie group, Nuclear dimension, Projective module, Riesz sequence, Smooth vector, Square-integrable representation, Strict comparison, Twisted group $C^*$-algebra. }
\begin{document}

\maketitle

\begin{abstract}
This paper provides sufficient density conditions for the existence of smooth vectors generating a frame or Riesz sequence in the lattice orbit of a square-integrable projective representation of a nilpotent Lie group. The conditions involve the product of lattice co-volume and formal dimension, and complement Balian--Low type theorems for the non-existence of smooth frames and Riesz sequences at the critical density.
The proof hinges on a connection between smooth lattice orbits and generators for an explicitly constructed finitely generated Hilbert $C^*$-module. 
An important ingredient in the approach is that twisted group $C^*$-algebras associated to finitely generated nilpotent groups have finite decomposition rank, hence finite nuclear dimension, which allows us to deduce that any matrix algebra over such a simple $C^*$-algebra has strict comparison of projections.
%\medskip 
\end{abstract}

\section{Introduction}

Let $G$ be a nilpotent Lie group and let $(\pi, \Hpi)$ be an irreducible, square-integrable projective representation of $G$. 
For a lattice $\Gamma \leq G$, consider the orbit of $\pi$ under a vector $\vect \in \Hpi$,
\begin{align} \label{eq:coherent}
\pi (\Gamma) \vect = (\pi(\gamma) \vect )_{\gamma \in \Gamma}. 
\end{align}
The aim of this paper is to study the existence of a vector $\vect \in \Hpi$ such that $\pi(\Gamma) \vect$ forms a frame or Riesz sequence (Riesz basis for its span) in $\Hpi$, that is, $\pi(\Gamma) g$ satisfies the \emph{frame inequalities}
\begin{align} \label{eq:frame_ineq_intro}
A \|f \|_{\Hpi}^2 \leq \sum_{\gamma \in \Gamma} |\langle f, \pi(\gamma) g \rangle |^2 \leq B \| f \|_{\Hpi}^2, \quad f \in \Hpi,
\end{align}
for constants $0< A \leq B < \infty$, or the \emph{Riesz inequalities}
\begin{align} \label{eq:riesz_ineq_intro}
A \| c \|_{\ell^2}^2 \leq \bigg\| \sum_{\gamma \in \Gamma} c_{\gamma} \pi(\gamma) g \bigg\|_{\Hpi}^2 \leq B \| c \|_{\ell^2}^2, \quad c \in \ell^2 (\Gamma).
\end{align}
A particular focus will be on the existence of frames and Riesz sequences $\pi (\Gamma) \vect$ for which the associated diagonal matrix coefficient function $C_g g : G \to \mathbb{C}$, defined by
\begin{align} \label{eq:matrix_coeff_intro}
    C_g g (x) = \langle g, \pi(x) g \rangle, \quad x \in G,
\end{align}
possesses an additional form of localization, e.g., smoothness or $L^1$-integrability. 

Frames and Riesz sequences are classical notions in various areas of complex and harmonic analysis, 
and play an important role in the applications of these areas as they provide stable and unconditionally convergent Hilbert space expansions. More modern variants of these notions have also been studied in the setting of operator theory and operator algebras, most notably in Hilbert $C^*$-modules, where they give rise to projections in associated $C^*$-algebras.

In this paper the existence of localized frames and Riesz sequences of the form \eqref{eq:coherent} will be studied via a correspondence to projections in an associated twisted group $C^*$-algebra. 
It turns out that recent results on $C^*$-algebras (in particular, group $C^*$-algebras) provide adequate tools that are capable of treating localization properties in the existence problem. 

Before formulating the main results and describing the methods used in their proof, the requisite background and context will be sketched. 

\subsection{Background and context}
A first fundamental obstruction to the existence of frames and Riesz sequences of lattice orbits $\pi(\Gamma)g$ 
is provided by the density theorem, relating the lattice co-volume $\vol(G/\Gamma)$ or its reciprocal (the so-called \textquotedblleft density\textquotedblright) and the formal dimension $d_{\pi} > 0$ of $\pi$. 
Different versions of this theorem can be found in, e.g., \cite{rieffel1981von, bekka2004square, romero2020density}.

\begin{theorem} \label{thm:density_intro}
Let $G$ be a nilpotent Lie group with a lattice $\Gamma \leq G$. Let $(\pi, \Hpi)$ be an irreducible, square-integrable projective representation of $G$ of formal dimension $d_{\pi} > 0$. 
\begin{enumerate}[(i)]
\item If there exists $\vect \in \Hpi$ such that $\pi (\Gamma) \vect$ forms a frame, 
 then $\vol(G/\Gamma) d_{\pi} \leq 1$.
\item If there exists $\vect \in \Hpi$ such that $\pi (\Gamma) \vect$ forms a Riesz sequence, then $\vol(G/\Gamma) d_{\pi} \geq 1$.
\end{enumerate}
$($The value $\vol(G/\Gamma) d_{\pi}$ is independent of the choice of Haar measure on $G$.$)$
\end{theorem}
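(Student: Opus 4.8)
The plan is to reduce both statements to the monotonicity of von Neumann (coupling) dimension under inclusions of Hilbert modules over the twisted group von Neumann algebra of $\Gamma$. Write $\sigma$ for the cocycle of the projective representation $\pi$, and let $M = \pi(\Gamma)'' \subseteq \B(\Hpi)$ be the von Neumann algebra generated by the operators $\pi(\gamma)$, $\gamma \in \Gamma$. Since $\Gamma$ is a (cocompact) lattice in a nilpotent Lie group, $M$ is a finite von Neumann algebra carrying a faithful normal tracial state, and it is $*$-isomorphic (as a tracial algebra) to the twisted group von Neumann algebra $\vN_{\sigma}(\Gamma)$ acting on $\ell^2(\Gamma)$ via the left $\sigma$-regular representation $\lambda$, whose standard module $\ell^2(\Gamma)$ has von Neumann dimension $1$.

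First I would introduce the analysis operator $\analysis_{\vect} \colon \Hpi \to \ell^2(\Gamma)$, $\analysis_{\vect} \veco = (\langle \veco, \pi(\gamma)\vect\rangle)_{\gamma \in \Gamma}$, and its adjoint synthesis operator $\synthesis_{\vect} = \analysis_{\vect}^*$, which satisfies $\synthesis_{\vect} c = \sum_{\gamma} c_{\gamma}\, \pi(\gamma)\vect$. A direct computation using the cocycle identity shows that $\analysis_{\vect}$ intertwines $\pi|_{\Gamma}$ with $\lambda$, so that $\analysis_{\vect}$ (and hence $\synthesis_{\vect}$) is equivariant for the action of $\vN_{\sigma}(\Gamma)$ and its range projection lies in the commutant $\vN_{\sigma}(\Gamma)'$. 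The frame inequalities \eqref{eq:frame_ineq_intro} say exactly that $\analysis_{\vect}$ is bounded above and below, i.e. injective with closed range, so that $\analysis_{\vect}$ realizes $\Hpi$ as a closed $\vN_{\sigma}(\Gamma)$-submodule of $\ell^2(\Gamma)$. Dually, the Riesz inequalities \eqref{eq:riesz_ineq_intro} say that $\synthesis_{\vect}$ is bounded above and below, so that it realizes $\ell^2(\Gamma)$ as a closed $\vN_{\sigma}(\Gamma)$-submodule of $\Hpi$.

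With this reformulation the theorem follows from two facts. The first is monotonicity of the coupling dimension: if one finite module embeds isometrically and equivariantly into another, its dimension does not exceed that of the ambient module. A frame thus forces $\dim_{\vN_{\sigma}(\Gamma)} \Hpi \leq \dim_{\vN_{\sigma}(\Gamma)} \ell^2(\Gamma) = 1$, while a Riesz sequence forces $1 = \dim_{\vN_{\sigma}(\Gamma)} \ell^2(\Gamma) \leq \dim_{\vN_{\sigma}(\Gamma)} \Hpi$. The second fact is Rieffel's coupling-constant formula identifying this dimension with $\vol(G/\Gamma)\, d_{\pi}$: one realizes $\Hpi$ isometrically inside $L^2(G)$ through the coefficient transform $\veco \mapsto C_{\vect}\veco$ (with $C_{\vect}\veco(x) = \langle \veco, \pi(x)\vect\rangle$ as in \eqref{eq:matrix_coeff_intro}), suitably normalized by $d_{\pi}$; this transform is available precisely by square-integrability and the orthogonality relations, which is where $d_{\pi}$ enters. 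Restricting to $\Gamma$ and computing against a Borel fundamental domain for $G/\Gamma$, where $\dim_{\vN_{\sigma}(\Gamma)} L^2(G) = \vol(G/\Gamma)$, yields the asserted value. Combining the two facts gives (i) and (ii).

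The hard part will be the coupling identity $\dim_{\vN_{\sigma}(\Gamma)} \Hpi = \vol(G/\Gamma)\, d_{\pi}$: one must verify that $M$ is finite with the correct trace (factoriality holds when the $\sigma$-twisted conjugacy classes of $\Gamma$ are infinite, which is tied to irreducibility and square-integrability of $\pi$), that the orthogonality relations persist in the projective, merely square-integrable setting, and that the dimension transforms correctly under restriction from $G$ to the lattice $\Gamma$. This is the content of the results of \cite{rieffel1981von, bekka2004square, romero2020density}, which I would invoke rather than reprove. The parenthetical independence of the Haar normalization is then immediate: rescaling Haar measure by $c > 0$ multiplies $\vol(G/\Gamma)$ by $c$ and $d_{\pi}$ by $c^{-1}$, leaving the product $\vol(G/\Gamma)\, d_{\pi}$ unchanged.
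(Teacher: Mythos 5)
The paper never proves \cref{thm:density_intro} at all: it is quoted from the literature, with the proof delegated to \cite{rieffel1981von, bekka2004square, romero2020density}, and your proposal reconstructs exactly the argument of those sources (and the machinery the paper itself reuses later, where the identity $\dim_{(\vN(\Gamma,\sigma),\tau)}\Hpi = \vol(G/\Gamma)\,d_{\pi}$ is imported from \cite[Theorem 4.3]{En21} in the proof of \cref{thm:trace}). In substance your route is the intended one: the frame, resp.\ Riesz, inequalities make the analysis, resp.\ synthesis, operator an equivariant embedding $\Hpi \hookrightarrow \ell^2(\Gamma)$, resp.\ $\ell^2(\Gamma) \hookrightarrow \Hpi$, of $\vN(\Gamma,\sigma)$-modules; monotonicity of von Neumann dimension against $\dim_{\vN(\Gamma,\sigma)}\ell^2(\Gamma)=1$ plus the coupling identity gives (i) and (ii), and your Haar-rescaling remark is correct.

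Two local statements should be repaired, though neither sinks the proof since you invoke the references for the hard step. First, $\dim_{\vN(\Gamma,\sigma)}L^2(G)=\vol(G/\Gamma)$ is false: for a fundamental domain $\Sigma$ one has $L^2(G)\cong \ell^2(\Gamma)\,\bar{\otimes}\,L^2(\Sigma)$ as $\vN(\Gamma,\sigma)$-modules, whose von Neumann dimension is infinite. The covolume enters instead through the trace of the range projection of the isometry $d_{\pi}^{1/2}C_{\vect}\colon \Hpi \to L^2(G)$ computed in the commutant of the $\Gamma$-action, which is what \cite[Theorem 4.3]{En21} and Bekka's argument actually do. Second, without Kleppner's condition (not assumed here) $\vN(\Gamma,\sigma)$ need not be a factor, and quasi-containment of $\pi|_{\Gamma}$ in $\lambda_{\Gamma}^{\sigma}$ --- which square-integrability does provide --- only yields $\pi(\Gamma)''\cong \vN(\Gamma,\sigma)z$ for some central projection $z$, not the $*$-isomorphism you assert. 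Fortunately your argument never uses that isomorphism: it only needs that $\Hpi$ is a normal $\vN(\Gamma,\sigma)$-module with a dimension relative to the canonical trace, and monotonicity of that dimension under equivariant isometries holds for any finite von Neumann algebra with a fixed faithful normal trace, factor or not.
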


Theorem \ref{thm:density_intro} provides a critical density for a lattice to admit a frame or Riesz sequence in its orbit. In particular, a lattice admitting an orthonormal basis must have the critical density $\vol(G/\Gamma) = d_{\pi}^{-1}$. Necessary conditions of this type are commonly referred to as \emph{density conditions} and can also be obtained for discrete index sets that do not necessarily form a group, see, e.g., \cite{fuehr2017density, mitkovski2020density}.

There is a converse to Theorem \ref{thm:density_intro} for irreducible representations of a nilpotent Lie group $N$ that are square-integrable modulo the center $Z = Z(N)$.
Representations of this type can be treated as projective representations of $G = N / Z$, so-called \emph{projective relative discrete series representations} (see Section \ref{sec:relativeDS}). The following result can be derived from \cite{bekka2004square, En21} by combining \cite[Theorem 1.3]{En21} and the arguments underlying \cite[Theorem 3]{bekka2004square}.

\begin{theorem} \label{thm:density_intro2}
Let $G$ be a connected, simply connected nilpotent Lie group with a lattice $\Gamma \leq G$. Let $(\pi, \Hpi)$ be a projective relative discrete series representation of $G$ of formal dimension $d_{\pi} > 0$.
\begin{enumerate}[(i)]
    \item If $\vol(G/\Gamma) d_{\pi} \leq 1 $, then there exists $\vect \in \Hpi$ such that $\pi(\Gamma) \vect$ forms a frame. 
    \item If $\vol(G/\Gamma) d_{\pi} \geq 1$, then there exists $\vect \in \Hpi$ such that $\pi(\Gamma) \vect$ forms a Riesz sequence. 
\end{enumerate}
\end{theorem}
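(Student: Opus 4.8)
The plan is to recast the existence of a frame or Riesz sequence in the orbit $\pi(\Gamma)\vect$ as a comparison of Hilbert modules over the twisted group von Neumann algebra $\vN(\Gamma,\sigma)$ of $\Gamma$, where $\sigma$ denotes the $2$-cocycle of $\pi$ restricted to $\Gamma \times \Gamma$, and then to invoke the coupling-constant (Murray--von Neumann dimension) calculus of finite von Neumann algebras. The starting point is that a projective relative discrete series representation $\pi$ of $G$ is, by construction, square-integrable, so that its restriction $\pi|_\Gamma$ is quasi-equivalent to the $\sigma$-twisted regular representation of $\Gamma$. This equips $\Hpi$ with the structure of a Hilbert module over $\vN(\Gamma,\sigma)$, a finite von Neumann algebra carrying a canonical trace $\tau$. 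The crucial quantitative input, which is the content of \cite[Theorem 1.3]{En21}, is the identification of the associated coupling constant:
\[ \dim_{\vN(\Gamma,\sigma)} \Hpi = \vol(G/\Gamma)\, d_\pi. \]

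With this in hand, I would record the dictionary between the analytic inequalities \eqref{eq:frame_ineq_intro}, \eqref{eq:riesz_ineq_intro} and module maps. The analysis operator $\analysis_\vect \colon \Hpi \to \ell^2(\Gamma)$, $\veco \mapsto (\langle \veco, \pi(\gamma)\vect\rangle)_{\gamma \in \Gamma}$, and the synthesis operator $\synthesis_\vect = \analysis_\vect^*$ intertwine $\pi|_\Gamma$ with the twisted regular representation and are therefore $\vN(\Gamma,\sigma)$-module maps. By definition, $\pi(\Gamma)\vect$ is a frame exactly when $\analysis_\vect$ is bounded and bounded below, i.e.\ an isomorphism of $\Hpi$ onto a closed submodule of $\ell^2(\Gamma)$; and $\pi(\Gamma)\vect$ is a Riesz sequence exactly when $\synthesis_\vect$ is bounded below, i.e.\ an isomorphism of $\ell^2(\Gamma)$ onto a closed submodule of $\Hpi$. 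Thus (i) amounts to embedding $\Hpi$ as a submodule of $\ell^2(\Gamma) = L^2(\vN(\Gamma,\sigma),\tau)$, and (ii) to embedding $\ell^2(\Gamma)$ as a submodule of $\Hpi$.

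These two module embeddings are precisely what the comparison theory of finite von Neumann algebras supplies once the dimensions are known. Since $\dim_{\vN(\Gamma,\sigma)} \ell^2(\Gamma) = 1$, comparison of modules yields an embedding $\Hpi \hookrightarrow \ell^2(\Gamma)$ when $\vol(G/\Gamma) d_\pi \leq 1$ and an embedding $\ell^2(\Gamma) \hookrightarrow \Hpi$ when $\vol(G/\Gamma) d_\pi \geq 1$. Converting such an abstract intertwiner into the analysis or synthesis operator of a genuine vector $\vect \in \Hpi$ — so that the frame or Riesz inequalities hold for an honest orbit — is the step carried out by the arguments underlying \cite[Theorem 3]{bekka2004square}, completing (i) and (ii) respectively.

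The main obstacle I anticipate is ensuring that the comparison of modules is governed by the single scalar $\vol(G/\Gamma) d_\pi$ rather than by a central-valued dimension. Concretely, one must verify that $\vN(\Gamma,\sigma)$ is a $\mathrm{II}_1$ factor, so that the coupling constant is a well-defined scalar and the inequalities $\leq 1$, $\geq 1$ directly control the existence of the embeddings; this is where the non-degeneracy of the cocycle coming from relative square-integrability, together with the structure of lattices in connected, simply connected nilpotent Lie groups, is essential. A secondary point is checking that the coupling constant is exactly $\vol(G/\Gamma) d_\pi$ and is independent of the normalization of Haar measure, so that it matches the critical threshold appearing in \Cref{thm:density_intro}.
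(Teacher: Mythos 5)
Your overall skeleton is indeed the route the paper takes (it derives \Cref{thm:density_intro2} by combining \cite[Theorem~1.3]{En21} with the arguments underlying \cite[Theorem~3]{bekka2004square}): extend $\pi|_\Gamma$ to make $\Hpi$ a Hilbert $\vN(\Gamma,\sigma)$-module, translate frame/Riesz into module embeddings relative to $\ell^2(\Gamma)=L^2(\vN(\Gamma,\sigma),\tau)$, and feed in the dimension value $\vol(G/\Gamma)d_\pi$. However, there is a genuine gap at exactly the step you flagged and then resolved incorrectly: you propose to \emph{verify} that $\vN(\Gamma,\sigma)$ is a $\mathrm{II}_1$ factor, claiming this follows from the non-degeneracy of the cocycle coming from relative square-integrability. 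This is false. By \cite[Theorem~2]{Kle62}, $\vN(\Gamma,\sigma)$ is a factor if and only if $(\Gamma,\sigma)$ satisfies Kleppner's condition, and \Cref{thm:density_intro2} assumes no such condition --- the paper explicitly emphasizes that Kleppner's condition is \emph{not} needed for this theorem (in contrast to Theorem~\ref{thm:main_intro}). Square-integrability of $\pi$ places no constraint on the restricted cocycle: for the Schr\"odinger representation of the Heisenberg group and $\Gamma = \Z^{2d}$ the cocycle $\sigma((x,\omega),(x',\omega'))=e^{-2\pi i x\cdot\omega'}$ restricts to the trivial cocycle on $\Gamma$, so $\vN(\Gamma,\sigma)\cong L^\infty(\T^{2d})$ is abelian; yet the conclusion of the theorem holds there ($\vol(G/\Gamma)d_\pi = 1$, and $\pi(\Z^{2d})\mathbbm{1}_{[0,1]^d}$ is an orthonormal basis). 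So the factoriality you rely on simply fails on instances the theorem covers.

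What is actually needed in the non-factor case is the center-valued comparison theory: the scalar inequality $\tau(\dim)\leq 1$ does not imply an embedding of $\Hpi$ into $\ell^2(\Gamma)$ when the center is nontrivial; embeddings are governed by the \emph{center-valued} dimension (coupling operator), and one must show that this operator equals the scalar $\vol(G/\Gamma)d_\pi\cdot 1$, not merely that its trace has that value. Establishing this constancy --- e.g.\ by decomposing $\vN(\Gamma,\sigma)$ and $\Hpi$ as direct integrals over the common center and proving the fiberwise coupling is almost everywhere constant --- is precisely the content of the ``arguments underlying \cite[Theorem~3]{bekka2004square}'' that the paper invokes alongside the scalar dimension computation of \cite[Theorem~1.3]{En21}. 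As written, your argument proves only the weaker statement in which Kleppner's condition is additionally imposed, and the missing center-valued step is the substantive part of the proof rather than a routine verification.
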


Together, Theorem \ref{thm:density_intro} and Theorem \ref{thm:density_intro2} provide a dichotomy that completely describes the reproducing properties (frame and Riesz sequence) of lattice orbits of square-integrable representations in terms of the lattice co-volume or density.
The existence claims in Theorem \ref{thm:density_intro2} rely on techniques for von Neumann algebras and are not accompanied by explicit constructions.
For more specific representations and lattices acting via two group actions, special cases of the existence claims in Theorem \ref{thm:density_intro2} can also be obtained via tiling arguments \cite{han2001lattice, dutkay2013on}, in which case the generating vector can be chosen to be an indicator function of a common fundamental domain. For historical expositions on the density theorem in time-frequency analysis, see \cite{heil2007history, folland2006abstruse}.

For vectors $g \in \Hpi$ possessing certain localization properties (i.e., a smooth or integrable vector), a second obstruction to the existence of frames and Riesz sequences of the form $\pi(\Gamma) g$ is given by the strictness of the density conditions in Theorem \ref{thm:density_intro}. For the Euclidean plane $G = \mathbb{R}^2$ and its projective Schr\"odinger representation $(\pi, L^2 (\mathbb{R}))$, the fundamental Balian--Low theorem in time-frequency analysis asserts that there exists no orthonormal basis (or Riesz basis) of the form $\pi (\Gamma) g$ for a Schwartz function $g \in \mathcal{S} (\mathbb{R})$,  \cite{daubechies1990wavelet, benedetto1998gabor}. Alternatively, for a Schwartz function, the associated density inequalities in Theorem
\ref{thm:density_intro} are strict \cite{grochenig2015deformation, feichtinger2004varying, ascensi2014dilation}. Balian--Low type theorems for (classes of) nilpotent groups have been obtained in \cite{jakobsen2020deformation, grochenig2020balian} and show that the inequalities in Theorem \ref{thm:density_intro} are strict for integrable vectors. It should be mentioned that (non-localized) orthonormal bases  in the orbit of a nilpotent Lie group could still exist by Theorem \ref{thm:density_intro2}, and even for nilpotent Lie groups not admitting a lattice, cf.~\cite{grochenig2018orthonormal, oussa2019compactly}.

A key problem in time-frequency or phase-space analysis \cite{groechenig2001foundations, folland1989harmonic} is the existence of smooth frames (resp.\ Riesz sequences) $\pi(\Gamma) g$ for a given lattice $\Gamma \leq G$ with super-critical (resp.\ sub-critical) density. While the mere existence of such frames and Riesz sequences are well-known for lattices possessing a qualitative \textquotedblleft covering density\textquotedblright     \,\cite{groechenig1991describing}, there are currently no quantitative results that match the necessary conditions provided by Theorem \ref{thm:density_intro}, except for the specific setting of the Heisenberg group. 
Indeed, for $G = \mathbb{R}^2$ and its projective %Heisenberg 
Schr\"odinger representation $(\pi, L^2 (\mathbb{R}))$ (for which $d_{\pi} = 1)$, the density theorems for sampling and interpolation in Bargmann-Fock spaces  \cite{seip1992density, seip1992density2, lyubarskij1992frames} can be recast as the Gaussian Gabor system $\pi(\Gamma) \vect$ with $\vect(t) = e^{-\pi t^2}$ forming a frame (resp.\ Riesz sequence) for $L^2 (\mathbb{R})$ if and only if $\vol(\mathbb{R}^2 / \Gamma) < 1$ (resp.\ $\vol(\mathbb{R}^2 / \Gamma) > 1$), see also  \cite{grochenig2018sampling, grochenig2013gabor, janssen1994signal}.
Although the frame and Riesz property of a multivariate Gaussian Gabor system cannot be simply described in terms of a density condition
\cite{pfander2013remarks, grochenig2011multivariate}, it is still expected \cite[Remark 2] {grochenig2011multivariate} that 
Gabor frames (resp.\ Riesz sequences) $\pi(\Gamma) g$ with arbitrary smooth window $g \in L^2 (\mathbb{R}^d)$ exist for any lattice $\Gamma \leq \mathbb{R}^{2d}$ satisfying $\vol(\mathbb{R}^{2d} / \Gamma) < 1$ (resp.\ $\vol(\mathbb{R}^{2d} / \Gamma) > 1)$, see also \cite{heil2007history, pfander2012geometric}. Only recently has there been a first contribution \cite{jakobsen2021duality} to this existence problem for Gabor frames in higher dimensions, namely for so-called non-rational lattices  $\Gamma \leq \mathbb{R}^{2d}$, by exploiting the structural results on (irrational) non-commutative tori \cite{Ri88} and its link with Gabor frames \cite{Lu09}; 
see Section \ref{sec:heisenberg} for a more detailed discussion.

\subsection{Main results}
Our main result concerns the existence of frames and Riesz sequences generated by smooth vectors, i.e., vectors $g \in \Hpi$ for which the orbit maps $x \mapsto \pi(x) g$ are smooth; in notation, $g \in \Hs$. 
The result relies on a compatibility condition between the $2$-cocycle $\sigma$ of the projective representation $\pi$ and the lattice $\Gamma$, known as \textquotedblleft Kleppner's condition\textquotedblright; see \cite{Kle62,Om15,P,Om14,BO}. A pair $(\Gamma, \sigma)$ satisfies \emph{Kleppner's condition} if, for any non-trivial $\gamma \in \Gamma$ satisfying $\sigma(\gamma, \gamma') = \sigma(\gamma', \gamma)$ for all $\gamma' \in \Gamma$ such that $\gamma' \gamma = \gamma \gamma'$, the associated conjugacy class $\{ (\gamma')^{-1} \gamma \gamma' : \gamma' \in \Gamma \}$ is infinite.

The following theorem is 
a special case of our main theorem (Theorem \ref{thm:existence}).

\begin{theorem} \label{thm:main_intro}
Let $(\pi, \Hpi)$ be a $\sigma$-projective relative discrete series representation of a connected, simply connected nilpotent Lie group $G$ of formal dimension $d_{\pi} > 0$. Suppose that $\Gamma \leq G$ is a lattice 
such that $(\Gamma, \sigma)$ satisfies Kleppner's condition.
\begin{enumerate}[(i)]
    \item If $\vol(G/\Gamma) d_{\pi} < 1 $, then there exists $\vect \in \Hs$ such that $\pi(\Gamma) \vect$ forms a frame. 
    \item If $\vol(G/\Gamma) d_{\pi} > 1$, then there exists $\vect \in \Hs$ such that $\pi(\Gamma) \vect$ forms a Riesz sequence.
\end{enumerate}
\end{theorem}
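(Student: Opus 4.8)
The plan is to reduce the existence of a smooth frame or Riesz sequence to a statement about Murray--von Neumann subequivalence of projections in a matrix algebra over the reduced twisted group $C^*$-algebra $A = C^*_r(\Gamma, \sigma)$, and then to settle that statement using strict comparison. First I would set up the module-theoretic dictionary in the spirit of Rieffel: the space of smooth vectors $\Hs$ carries the structure of a finitely generated projective Hilbert module over a dense, spectrally invariant $*$-subalgebra $\mathcal{A} \subseteq A$ of sufficiently rapidly decaying (twisted) functions on $\Gamma$, with module inner product built from the matrix coefficients $\gamma \mapsto \langle f, \pi(\gamma) g\rangle$. In this dictionary, $\pi(\Gamma) g$ being a frame corresponds to $g$ being a single module generator (equivalently, the frame operator, read off in the commuting algebra, being invertible), while $\pi(\Gamma) g$ being a Riesz sequence corresponds to $\langle g, g\rangle_{\mathcal{A}}$ being invertible in $\mathcal{A}$, i.e.\ to $g$ spanning a free rank-one submodule.

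Next I would compute the module dimension. Writing $\Hs \cong p\mathcal{A}^n$ for a projection $p \in M_n(\mathcal{A})$, the (unnormalized) trace of $p$ equals the product $\vol(G/\Gamma) d_\pi$; this is the $C^*$-level refinement of the von Neumann dimension computation underlying \cref{thm:density_intro2}. Under this identification a smooth frame exists precisely when $p$ is subequivalent to the rank-one free projection $e_{11}$, so that $\Hs$ is singly generated (a summand $q\mathcal{A}$ of the free rank-one module), while a smooth Riesz sequence exists precisely when $e_{11}$ is subequivalent to $p$, so that a free rank-one summand splits off. The strict hypotheses $\vol(G/\Gamma) d_\pi < 1$ and $\vol(G/\Gamma) d_\pi > 1$ are then exactly the strict trace inequalities $\Tr(p) < \Tr(e_{11}) = 1$ and $\Tr(e_{11}) = 1 < \Tr(p)$.

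To convert these strict trace inequalities into the required subequivalences I would invoke strict comparison of projections for $M_n(A)$. Here Kleppner's condition enters to guarantee that $A$ is simple with a unique trace, so that the single number $\vol(G/\Gamma) d_\pi$ governs comparison. Since $\Gamma$ is a finitely generated nilpotent group, $A$ has finite decomposition rank and hence finite nuclear dimension; combined with simplicity, this regularity yields strict comparison, a property stable under matrix amplification. Strict comparison then gives $p \precsim e_{11}$ when $\Tr(p) < 1$ and $e_{11} \precsim p$ when $\Tr(p) > 1$, producing in either case the generator realizing the desired reproducing property. Finally, because $\mathcal{A}$ is holomorphically closed in $A$, the comparison and the resulting generator can be arranged inside the smooth subalgebra, so that the generating vector lies in $\Hs$.

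The main obstacle is the $C^*$-algebraic regularity input, namely establishing that twisted group $C^*$-algebras of finitely generated nilpotent groups have finite nuclear dimension, from which strict comparison follows; this is the decisive ingredient flagged in the abstract. The remaining difficulties are the careful construction of the spectrally invariant smooth subalgebra $\mathcal{A}$ and the verification that $\Hs$ is finitely generated projective over it with module trace equal to $\vol(G/\Gamma) d_\pi$. These are what guarantee that the abstract subequivalence furnishes a genuinely \emph{smooth} generator rather than merely some vector in $\Hpi$, and thereby upgrade the $\leq$ and $\geq$ of \cref{thm:density_intro2} to the strict inequalities claimed here.
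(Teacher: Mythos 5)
Your proposal is correct in substance and follows the same global architecture as the paper's proof of this theorem (via \cref{thm:existence}): encode lattice orbits as a module over $C_r^*(\Gamma,\sigma)$ with inner product $\gamma \mapsto \langle f, \pi(\gamma) g\rangle$, identify single-vector frames with single generators and single-vector Riesz sequences with a free rank-one summand (\cref{prop:generating_multiwindow}, \cref{prop:generating_isometry}), compute the module trace to be $\vol(G/\Gamma)\, d_\pi$ (\cref{thm:trace}), and convert the strict trace inequality into Murray--von Neumann subequivalence using strict comparison, which follows from finite decomposition rank of $C_r^*(\Gamma,\sigma)$ together with simplicity and unique trace under Kleppner's condition (\cref{fg-nil3}, \cref{fg-nil4}, \cref{prop:trace_generators}). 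The one genuine methodological divergence is how smoothness of the generator is secured. You work throughout over a spectrally invariant smooth subalgebra $\mathcal{A} \simeq \mathcal{S}(\Gamma)$, claim $\Hs$ is finitely generated \emph{projective over $\mathcal{A}$}, and invoke holomorphic closedness to pull the comparison back into the smooth level. That route is viable (rapid-decay algebras over groups of polynomial growth are indeed spectrally invariant), but it is heavier than needed and hides a real technical step: strict comparison produces a partial isometry in $M_n(A)$, and transferring a \emph{sub}equivalence --- not just an equivalence of projections already lying in $M_n(\mathcal{A})$ --- into the smooth subalgebra requires first conjugating the subprojection into $M_n(\mathcal{A})$, which you do not address. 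The paper sidesteps all of this: it works with the completed Hilbert $C^*$-module $\E$ over the full $C^*$-algebra, applies comparison there to get \emph{some} generator in $\E$, and then uses a simple perturbation argument (the last part of \cref{prop:generating_isometry}: invertibility of the frame/Gramian operator is an open condition, so a generating set can be moved into any dense subspace) to place the generator in $\Hs$; no spectral invariance or projectivity over $\mathcal{S}(\Gamma)$ is ever needed. Two further points you flag but leave open are supplied in the paper by specific inputs you should be aware of: finite generation of the module comes from classical sampling/covering arguments producing a multiwindow smooth frame (\cref{prop:multiwindow}), not from abstract projectivity, and the trace computation $\tau(\E) = \vol(G/\Gamma)\, d_\pi$ is obtained by localizing $\E$ to the Hilbert $\vN(\Gamma,\sigma)$-module $\Hpi$ and citing the von Neumann dimension computation of \cite{En21} (\cref{prop:trace_dimension}). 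What your heavier framework would buy, if carried out, is extra information the paper does not claim --- e.g.\ that frame operators and inner products stay within the Schwartz algebra --- but for the theorem as stated the paper's density trick is both shorter and fully sufficient.
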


Under Kleppner's condition, Theorem \ref{thm:main_intro} provides a full converse to the Balian--Low type theorems 
\cite{grochenig2020balian, grochenig2015deformation, feichtinger2004varying, ascensi2014dilation} on the strictness of the density conditions (Theorem \ref{thm:density_intro}) for smooth vectors. In fact, as a direct consequence of Theorem \ref{thm:main_intro}, the smooth vectors in Theorem \ref{thm:main_intro} could even be chosen to be analytic (cf.~Corollary \ref{cor:analytic}).
A more general version of Theorem \ref{thm:main_intro}, valid for projective representations arising from genuine representations that are merely square-integrable modulo their projective kernel, is given in Theorem \ref{thm:existence}.

It is currently not known whether 
the existence claims (i) and (ii) in Theorem \ref{thm:main_intro} also hold without the assumption of Kleppner's condition. The fact that Kleppner's condition is not needed in Theorem \ref{thm:density_intro2} and in a version of Theorem \ref{thm:main_intro} for the $3$-dimensional Heisenberg group indicates that it might be superfluous for Theorem \ref{thm:main_intro} in general, too.  

For applications to time-frequency analysis, we mention that the representations $(\pi, \Hpi)$ appearing in Theorem \ref{thm:main_intro} can, by Kirillov's orbit method, be realized to act on some $L^2 (\mathbb{R}^d)$, with the action of $\pi$ in a coordinate parametrization given by
\[
\pi(x) f(t) = e^{i P(x,t)} f(Q(x,t)), \quad t \in \mathbb{R}^d, \; x \in \mathbb{R}^n,
\]
for polynomials $P$ and $Q$. In such a realization, the space of smooth vectors $\Hs$ is precisely the space $\mathcal{S} (\mathbb{R}^d)$ of Schwartz functions, and the corresponding matrix coefficients define functions in $\mathcal{S} (\mathbb{R}^n)$.
Theorem \ref{thm:main_intro} provides therefore new classes of localized frames and Riesz sequences in $L^2 (\mathbb{R}^d)$. A key feature of such localized systems is that, via techniques underlying the theory of localized frames \cite{aldroubi2008slanted, balan2006density, romero2021dual, fornasier2005intrinsic, grochenig2004localization}, the reproducing properties of frames and Riesz sequences, namely
\[
f \in L^2 (\mathbb{R}^d) \quad  \text{if and only if} \quad
f = \sum_{\gamma \in \Gamma} c_{\gamma}  \pi(\gamma) g \quad \text{for some }(c_{\gamma})_{\gamma \in \Gamma} \in \ell^2 (\Gamma),
\]
respectively
\[
c \in \ell^2 (\Gamma) \quad \text{if and only if} \quad c_{\gamma} = \langle f, \pi(\gamma) g \rangle \quad \text{for some } f \in L^2 (\mathbb{R}^d),
\]
%\todo[inline]{Ulrik: Could one add ``for some'' instead of commas before $(c_\gamma)$ and $f$ here? Otherwise it might be a little confusing for people who are not familiar with the statements.}
automatically extend to families of associated Banach spaces; in particular, so-called \emph{coorbit spaces} \cite{feichtinger1989banach}. Therefore, such localized systems provide a description and characterization of these Banach spaces and can be used for the purpose of (generalized) time-frequency analysis on $\mathbb{R}^d$; see \cite{grochenig2021new} for 
a concrete exposition associated to lower-dimensional nilpotent groups.

\subsection{Methods}
With notation as in Theorem \ref{thm:main_intro}, 
our proof method is based on the interpretation of a vector $\vect \in \Hpi$
defining a lattice orbit $\pi(\Gamma) g$ as an element of a module over an associated operator algebra. 
The relevant operator algebras are generated by the $\sigma$-twisted left regular representation $(\lambda_{\Gamma}^{\sigma}, \ell^2 (\Gamma))$ of $\Gamma$, determined by 
\[ \lambda_{\Gamma}^{\sigma}(\gamma) \delta_{\gamma'} = \sigma(\gamma,\gamma') \delta_{\gamma \gamma'} \;\;\; \text{for $\gamma,\gamma' \in \Gamma$,} \]
where $\{ \delta_{\gamma} : \gamma \in \Gamma \}$ is the canonical basis for $\ell^2(\Gamma)$. The completion of the span of the collection
\[ \lambda_{\Gamma}^{\sigma}(\Gamma) = \{ \lambda_{\Gamma}^{\sigma}(\gamma) : \gamma \in \Gamma \} \subseteq \B(\ell^2(\Gamma)) \]
in the strong operator topology gives the $\sigma$-twisted group von Neumann algebra $\vN(\Gamma,\sigma)$, while completion in the norm topology gives the (reduced) $\sigma$-twisted group $C^*$-algebra $C_r^*(\Gamma,\sigma)$. Since a lattice $\Gamma \leq G$ in a nilpotent Lie group $G$ is finitely generated and nilpotent, Kleppner's condition on $(\Gamma,\sigma)$ is  equivalent to the algebra $\vN(\Gamma,\sigma)$ (resp.\ $C_r^*(\Gamma,\sigma)$) being a factor (resp.\ simple), cf.~\cite{Kle62,P}. In addition, since $\Gamma$ is amenable, the reduced algebra $C_r^*(\Gamma,\sigma)$ is isomorphic to the \emph{full} twisted group $C^*$-algebra $C^*(\Gamma,\sigma)$. Our approach makes a fundamental use of the algebras $C_r^*(\Gamma, \sigma)$ and $C^*(\Gamma, \sigma)$ being simple. It should be mentioned that the non-twisted group $C^*$-algebra $C^* (\Gamma)$ (i.e., $\sigma$ being trivial) is simple if and only if $\Gamma$ is trivial, so that the use of cocycles is essential for our approach.

The question of the existence of a general vector $g \in \Hpi$ generating a frame $\pi(\Gamma)g$ in $\Hpi$ (see Theorem \ref{thm:density_intro2}) can be naturally approached using techniques for von Neumann algebras and their Hilbert modules, as shown by Bekka \cite{bekka2004square} (cf.~\cite{En21} for Riesz sequences).
The fundamental observation here is that there exists a frame (resp.\ Riesz sequence) of the form $\pi(\Gamma) \vect$ for some $\vect \in \Hip$ if and only if $\pi|_{\Gamma}$ is a subrepresentation of $\lambda_{\Gamma}^{\sigma}$ (resp.\ $\lambda_{\Gamma}^{\sigma}$ is a subrepresentation of $\pi|_{\Gamma}$), cf.~\cite[Corollary 3 and 4]{bekka2004square}. By the square-integrability of $\pi$, the restriction $\pi|_{\Gamma}$ can be extended to give $\Hip$ the structure of a Hilbert $\vN(\Gamma,\sigma)$-module, so that the existence of frames (resp.\ Riesz sequences) in $\Hip$ is equivalent to $\Hpi$ being a submodule of $\ell^2(\Gamma)$ (resp.\ $\ell^2(\Gamma)$ is a submodule of $\Hip$), cf.~\cite[Proposition 1]{bekka2004square} and  \cite[Theorem 5.1]{En21}. When $\vN(\Gamma,\sigma)$ is a factor, such submodule inclusions are in turn equivalent to inequalities involving the associated von Neumann dimensions, which gives rise to the inequalities $ \vol(G/\Gamma) d_{\pi} \leq 1$ (resp.\ $ \vol(G/\Gamma) d_{\pi} \geq 1$). The basis for these results is that projections in a II$_1$ factor (such as $\vN(\Gamma, \sigma))$ are completely classified by their value under the canonical tracial state by the Murray-von Neumann comparison theory.

For providing density conditions for the existence of a localized vector $\vect$ yielding a frame or Riesz sequence of the form $\pi(\Gamma) \vect$, 
the above mentioned von Neumann algebra techniques do not seem to be sufficient. In contrast, we show in the present paper that the theory of $C^*$-algebras and associated Hilbert $C^*$-modules do provide powerful techniques for approaching the localization problem. 
The following explicitly constructed Hilbert $C^*$-module plays a central role in the proof of Theorem \ref{thm:main_intro}.

\begin{theorem}\label{thm:module_intro}
Let $(\pi, \Hpi)$ be a $\sigma$-projective relative discrete series representation of a connected, simply connected, nilpotent Lie group $G$. Suppose $\Gamma \leq G$ is a lattice. Then the space $\Hip^\infty$ of smooth vectors can be completed into a finitely generated left Hilbert $C_r^*(\Gamma,\sigma)$-module $\module{\pi}{\Gamma}$, where the left action and the $C_r^*(\Gamma,\sigma)$-valued inner product $\lhs{\cdot}{\cdot}$ are determined by
\begin{align*}
    a \cdot \veco &= \sum_{\gamma \in \Gamma} a(\gamma) \pi(\gamma) \veco  \;\;\; \text{for $a \in \mathcal{S}(\Gamma)$ and $\veco \in \Hip^\infty$,} \\
    \lhs{\veco}{\vect}(\gamma) &= \langle \veco, \pi(\gamma) \vect \rangle \;\;\;\;\;\;\; \text{for $\veco, \vect \in \Hip^\infty$ and $\gamma \in \Gamma$,}
\end{align*}
where $\mathcal{S}(\Gamma) \subset \ell^1(\Gamma)$ denotes the Schwartz space on $\Gamma$, cf.\ \Cref{sec:lattice_orbits}.
\end{theorem}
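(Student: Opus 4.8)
The plan is to build the module $\module{\pi}{\Gamma}$ by equipping the smooth vectors $\Hip^\infty$ with the stated $C_r^*(\Gamma,\sigma)$-valued inner product, verifying that this is a well-defined pre-Hilbert $C^*$-module structure, and then showing the completion is finitely generated. The proof naturally splits into three phases: (1) establishing that the formulas for the left action and inner product make sense on smooth vectors and land in the appropriate algebras; (2) verifying the Hilbert $C^*$-module axioms; and (3) proving finite generation.

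First I would address well-definedness. The key preliminary fact is that for smooth vectors $\veco, \vect \in \Hip^\infty$, the matrix coefficient $\gamma \mapsto \langle \veco, \pi(\gamma)\vect \rangle$ lies in the Schwartz space $\mathcal{S}(\Gamma)$; this should follow from the decay properties of matrix coefficients of smooth vectors of square-integrable representations restricted to the lattice (this is precisely the content developed in \Cref{sec:lattice_orbits}, which I may assume). Since $\mathcal{S}(\Gamma) \subset \ell^1(\Gamma) \subset C_r^*(\Gamma,\sigma)$, the inner product $\lhs{\veco}{\vect}$ is a bona fide element of the $C^*$-algebra. Dually, for $a \in \mathcal{S}(\Gamma)$ and $\veco \in \Hip^\infty$, the sum $a \cdot \veco = \sum_{\gamma} a(\gamma)\pi(\gamma)\veco$ converges in $\Hip$ (by $\ell^1$-summability) and, more importantly, remains a smooth vector; that $\Hip^\infty$ is invariant under this action, and that the action extends continuously to all of $C_r^*(\Gamma,\sigma)$, requires checking that $\|a\cdot \veco\|$ is controlled by the $C^*$-norm $\|a\|_{C_r^*(\Gamma,\sigma)}$, which follows from viewing $\pi|_\Gamma$ as a $\sigma$-representation of $\Gamma$ and extending it to a $*$-representation of the group algebra.

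Next I would verify the module axioms. The $C^*$-linearity in the first variable, the adjoint-symmetry $\lhs{\veco}{\vect} = \lhs{\vect}{\veco}^*$, and compatibility $\lhs{a\cdot \veco}{\vect} = a\, \lhs{\veco}{\vect}$ are all routine computations using the defining relation $\lambda_\Gamma^\sigma(\gamma)\delta_{\gamma'} = \sigma(\gamma,\gamma')\delta_{\gamma\gamma'}$ and the cocycle identity for $\sigma$; these reduce to rearranging sums and tracking cocycle factors, so I would state them rather than grind through. Positivity $\lhs{\veco}{\veco} \geq 0$ in $C_r^*(\Gamma,\sigma)$ is the one axiom deserving care: the element $\lhs{\veco}{\veco}(\gamma) = \langle \veco, \pi(\gamma)\veco\rangle$ should be shown positive by recognizing it, via the analysis operator $\veco \mapsto (\langle \veco, \pi(\gamma)\veco\rangle)_\gamma$, as an element of the form $b^* b$ or by pairing against the canonical trace and appealing to the correspondence between $\pi|_\Gamma$ and subrepresentations of $\lambda_\Gamma^\sigma$ already recalled in the Methods section.

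The main obstacle I anticipate is \textbf{finite generation}, and this is where the square-integrability hypothesis must be used decisively. Completeness of the module under the norm induced by $\lhs{\cdot}{\cdot}$ must first be checked, and then I would argue that the completion $\module{\pi}{\Gamma}$ is finitely generated and projective. The natural strategy is to exhibit the module as a direct summand of a free module $C_r^*(\Gamma,\sigma)^n$: by square-integrability and the density/comparison theory, $\Hip$ embeds as a submodule of a finite direct sum of copies of $\ell^2(\Gamma)$ at the von Neumann algebra level, and one must transfer this to the $C^*$-level while keeping track of smoothness. Concretely, I would look for finitely many smooth vectors $\veco_1,\dots,\veco_n \in \Hip^\infty$ whose orbits jointly generate, equivalently a finite frame-like family so that the associated Gram matrix $(\lhs{\veco_i}{\veco_j})_{i,j}$ together with a reconstruction identity expresses every $\vect$ as a $C_r^*(\Gamma,\sigma)$-combination of the $\veco_i$; producing such vectors with the required smoothness and verifying reconstruction in the $C^*$-norm (rather than merely weakly) is the crux, and is likely where the finite nuclear dimension / strict comparison machinery advertised in the abstract enters to guarantee the relevant projection can be realized.
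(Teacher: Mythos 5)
Your phases (1) and (2) are sound and track the paper closely: admissibility of the pair $(\mathcal{S}(\Gamma),\Hs)$ is exactly \Cref{prop:admissible_nilpotent} (via the Schwartz decay of matrix coefficients in \Cref{lem:smooth_matrix} and amalgam-type sampling estimates), and the completion into a Hilbert $C_r^*(\Gamma,\sigma)$-module with the stated axioms is the general \Cref{prop:admissible_module}. One caution on positivity: \textquotedblleft pairing against the canonical trace\textquotedblright\ cannot work, since positivity of a single trace does not detect positivity of an element of a $C^*$-algebra; the paper's route (\Cref{prop:pos_def}) shows instead that the diagonal matrix coefficient $\lhs{\veco}{\veco}$ is $\sigma$-positive definite by compressing $\widehat{\lambda_\Gamma^\sigma}(a)$ to finite-dimensional subspaces of $\ell^2(\Gamma)$ — essentially your \textquotedblleft $b^*b$\textquotedblright\ alternative, which is the one to pursue.

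The genuine gap is in phase (3). You propose to obtain finite generation by embedding $\Hip$ into finitely many copies of $\ell^2(\Gamma)$ at the von Neumann level and transferring to the $C^*$-level via strict comparison and the nuclear-dimension machinery. This would fail: \Cref{thm:module_intro} assumes \emph{no} Kleppner condition, whereas strict comparison as established in the paper (\Cref{fg-nil4}) requires $(\Gamma,\sigma)$ to satisfy Kleppner's condition — without it $C_r^*(\Gamma,\sigma)$ need not be simple nor have a unique trace, so your comparison step has no foothold, and in any case an abstract comparison argument would not place the generators inside $\Hs$. The paper's actual argument is elementary and bypasses all of this: by classical coorbit sampling (\Cref{prop:multiwindow}), for a fixed $\vect \in \Hs \setminus \{0\}$ there is a compact unit neighborhood $U$ such that every sufficiently dense relatively separated family samples a frame; since $\Gamma$ is cocompact, finitely many translates $x_1 U, \ldots, x_n U$ cover a fundamental domain, so the smooth windows $\vect_j = \pi(x_j)\vect$ yield a finite multiwindow frame $(\pi(\gamma)\vect_j)_{\gamma,j}$. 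Then \Cref{prop:generating_multiwindow} converts this into an algebraic generating set for $\module{\pi}{\Gamma}$: the localization of the module frame operator is the ordinary Hilbert-space frame operator, and invertibility passes back to $\mathcal{L}_A(\E)$ by spectral invariance of unital $C^*$-subalgebras — which also resolves your worry about reconstruction holding \textquotedblleft in the $C^*$-norm rather than merely weakly.\textquotedblright\ Strict comparison enters only later, in \Cref{thm:existence}, to reduce to a \emph{single} generator, and it is precisely there (and only there) that Kleppner's condition is assumed.
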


A general method for the construction of a Hilbert $C_r^*(\Gamma,\sigma)$-module from an integrable $\sigma$-projective representation of a discrete group $\Gamma$ was outlined by Rieffel \cite{Ri85}. However, the explicit construction of such modules was only accomplished in \cite{Ri88} for the projective Heisenberg representation of a locally compact abelian group of the form $G \times \widehat{G}$. The modules constructed in \cite{Ri88} are usually called \emph{Heisenberg modules} and have found numerous applications in operator algebras and noncommutative geometry, see, e.g., \cite{EcLuPh10,Wa01,BoChHeLi18,Lu11,LeMo16,Co80}. The explicit module provided by Theorem \ref{thm:module_intro} forms a natural generalization of the Heisenberg modules to all nilpotent Lie groups and is established here via the representation theory of nilpotent Lie groups and associated coorbit space theory.
 It should be mentioned that the Heisenberg modules of Rieffel \cite{Ri88} are, in addition, also equipped with a natural right action which gives them the structure of imprimitivity bimodules. No such extra structure is present for the modules provided by Theorem \ref{thm:module_intro}. 
 
The link between the Hilbert $C^*$-module $\module{\pi}{\Gamma}$ and lattice orbits $\pi(\Gamma) \vect$ with $\vect \in \Hip^\infty$ is given by the following theorem, see \Cref{prop:generating_multiwindow} for a more general version and \Cref{subsec:spanning_independence} for definitions of the terms used below.
 
\begin{theorem}\label{thm:intro_module_frame}
With notation as in Theorem \ref{thm:module_intro}, 
let $A := C_r^*(\Gamma,\sigma)$ and let $\vect_1, \ldots, \vect_n \in \Hip^\infty$. Then the following assertions hold:
\begin{enumerate}[(i)]
    \item The set $\{ \vect_1, \ldots, \vect_n \}$ is an algebraic generating set for $\module{\pi}{\Gamma}$ if and only if $( \pi(\Gamma) \vect_j)_{1 \leq j \leq n}$ is a frame for $\Hip$.
    \item The set $\{ \vect_1, \ldots, \vect_n \}$ is $A$-linearly independent and has closed $A$-span in $\module{\pi}{\Gamma}$ if and only if $( \pi(\Gamma) \vect_j)_{ 1 \leq j \leq n}$ is a Riesz sequence in $\Hip$.
\end{enumerate}
\end{theorem}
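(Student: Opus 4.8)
The plan is to route both equivalences through the synthesis operator of the family $\{\vect_1,\dots,\vect_n\}$ and to match its module-theoretic invertibility data with the Hilbert-space frame and Gramian operators via the canonical trace. Write $A := C_r^*(\Gamma,\sigma)$ and consider the synthesis operator $\synthesis \colon A^n \to \module{\pi}{\Gamma}$, $\synthesis(a_1,\dots,a_n) = \sum_{j=1}^n a_j \cdot \vect_j$, where $A^n$ carries the standard Hilbert $A$-module structure. First I would check that $\synthesis$ is adjointable with adjoint the analysis operator $\analysis(\veco) = (\lhs{\veco}{\vect_j})_{j}$, so that $\frameop := \synthesis\analysis = \sum_j \lhs{\cdot}{\vect_j}\cdot\vect_j \in \mathcal{L}(\module{\pi}{\Gamma})$ and $\gramian := \analysis\synthesis = (\lhs{\vect_j}{\vect_i})_{i,j} \in M_n(A)$ are the module frame operator and module Gramian. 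The two module conditions then become invertibility statements by standard facts about adjointable operators between Hilbert $C^*$-modules: the set $\{\vect_j\}$ is an algebraic generating set exactly when $\synthesis$ is surjective, equivalently when $\frameop = \synthesis\synthesis^*$ is invertible in $\mathcal{L}(\module{\pi}{\Gamma})$; and it is $A$-linearly independent with closed $A$-span exactly when $\synthesis$ is bounded below, equivalently when $\gramian = \synthesis^*\synthesis$ is invertible in $M_n(A)$.

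The bridge to $\Hpi$ is the identity $\langle \veco,\vect\rangle_{\Hpi} = \lhs{\veco}{\vect}(e) = \tau(\lhs{\veco}{\vect})$, where $\tau$ is the canonical (faithful) trace on $A$. Thus $\Hpi$ is the Hilbert-space localization of $\module{\pi}{\Gamma}$ at $\tau$, and I would use the elementary but crucial observation that localizing at a faithful state yields a faithful representation: if $T \in \mathcal{L}(\module{\pi}{\Gamma})$ acts as $0$ on $\Hpi$, then $\tau(\lhs{T\veco}{T\veco}) = 0$ for all $\veco$, whence $\lhs{T\veco}{T\veco} = 0$ and $T = 0$ by faithfulness of $\tau$. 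Consequently the induced representation $\rho \colon \mathcal{L}(\module{\pi}{\Gamma}) \to \B(\Hpi)$ is an injective $*$-homomorphism, hence isometric and spectrum-preserving, and it sends $\frameop$ to the Hilbert-space frame operator $S = \sum_{j}\sum_{\gamma \in \Gamma}\langle \cdot, \pi(\gamma)\vect_j\rangle\,\pi(\gamma)\vect_j$. Applying the same localization to the module $A^n$ identifies its completion with $\ell^2(\Gamma)^n$ and the faithful representation $M_n(A) \to \B(\ell^2(\Gamma)^n)$ with the $n$-fold amplification of the (faithful) left regular representation $\lambda_{\Gamma}^{\sigma}$; under it $\gramian$ becomes the Hilbert-space Gramian $\mathscr{G}_{\Hpi}$, i.e.\ the twisted-convolution operator on $\ell^2(\Gamma)^n$ determined by the entries $\gamma \mapsto \langle \vect_j, \pi(\gamma)\vect_i\rangle$.

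It then remains to record the Hilbert-space characterizations. Since each $\vect_j \in \Hs$, the diagonal matrix coefficients restrict to $\ell^1(\Gamma)$, so $(\pi(\Gamma)\vect_j)_j$ is automatically a Bessel system and the upper frame and Riesz bounds hold for free; thus $(\pi(\Gamma)\vect_j)_j$ is a frame precisely when $S$ is invertible on $\Hpi$, and a Riesz sequence precisely when $\mathscr{G}_{\Hpi}$ is invertible on $\ell^2(\Gamma)^n$. Chaining the equivalences through the spectrum-preserving representations yields both claims: for (i), $\{\vect_j\}$ generates $\iff \frameop$ invertible $\iff S = \rho(\frameop)$ invertible $\iff (\pi(\Gamma)\vect_j)_j$ is a frame; for (ii), $\{\vect_j\}$ is $A$-linearly independent with closed $A$-span $\iff \gramian$ invertible $\iff \mathscr{G}_{\Hpi}$ invertible $\iff (\pi(\Gamma)\vect_j)_j$ is a Riesz sequence.

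The conceptual heart, and the step I expect to require the most care, is precisely the passage between invertibility in the $C^*$-algebra $\mathcal{L}(\module{\pi}{\Gamma})$ (respectively $M_n(A)$) and invertibility on the Hilbert space: this is exactly where faithfulness of the canonical trace on $C_r^*(\Gamma,\sigma)$ is indispensable, since it is what allows a purely module-theoretic property to control a quantitative Hilbert-space frame bound. The remaining labour is the bookkeeping identifying $\rho(\frameop)$ with $S$ and $\gramian$ with $\mathscr{G}_{\Hpi}$, the latter requiring a short computation with the cocycle $\sigma$ to recognize the Gram matrix of the orbit as twisted convolution by $(\lhs{\vect_j}{\vect_i})_{i,j}$.
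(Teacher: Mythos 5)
Your proposal is correct and follows essentially the same route as the paper's own proof (\Cref{prop:generating_multiwindow}): you localize the module at the canonical faithful trace to identify $\Hi_{\E}^\tau$ with $\Hpi$ and $\Hi_{A^n}^\tau$ with $\ell^2(\Gamma)^n$, match the module frame and Gramian operators with their Hilbert-space counterparts, and transfer invertibility through the injective (hence isometric and spectrally invariant) $*$-homomorphism $T \mapsto T^\tau$, exactly as in \Cref{lem:loc_map} and the discussion following it. Your module-side characterizations via surjectivity and bounded-below-ness of the synthesis operator are the same standard facts the paper invokes as \Cref{lem:frame_gen} and \Cref{prop:gramian_invertible}.
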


Theorem \ref{thm:intro_module_frame} provides a correspondence between spanning (resp.\ linear independent) sets in $\module{\pi}{\Gamma}$ and frames (resp.\ Riesz sequences) in $\Hpi$.
For the particular setting of the projective Heisenberg representation $(\pi,L^2(\R^d))$ of $\R^{2d}$, the correspondence for frames (part (i)) was first proved by Luef in \cite{Lu09}. Combined with the general fact that imprimitivity bimodules between unital $C^*$-algebras must be finitely generated, this was used in \cite{Lu09} to prove the existence of a Gabor frame $( \pi(\gamma) \vect_j)_{\gamma \in \Gamma, 1 \leq j \leq n}$ for $L^2 (\mathbb{R}^d)$ with finitely many localized windows over any given lattice $\Gamma$ in $\R^{2d}$. In the present paper we prove directly the existence of such frames in the orbit of nilpotent Lie groups by exploiting classical sampling techniques \cite{feichtinger1989banach, groechenig1991describing}.\footnote{This should be compared with the discussion  \cite[p.1942]{Lu09}, where it is asserted that the techniques \cite{feichtinger1989banach, groechenig1991describing} would not provide an explanation for the fact that only finitely many generators are needed.}  Via Theorem \ref{thm:intro_module_frame}, the existence of a multiwindow frame with finitely many windows in $\Hs$ implies that the module $\module{\pi}{\Gamma}$ constructed in Theorem \ref{thm:module_intro} is finitely generated, which is essential for our approach towards proving \Cref{thm:main_intro}. 

A finitely generated Hilbert $C^*$-module $\module{\pi}{\Gamma}$ as constructed in Theorem \ref{thm:module_intro} corresponds naturally to a projection in a matrix algebra over $C_r^*(\Gamma,\sigma)$, 
which can be explicitly constructed using module frames, see, e.g., \cite{FrLa02, rieffel2010vector}. Using this interpretation, the existence of frames and Riesz sequences $\pi(\Gamma)\vect$ generated by a single $\vect \in \Hs$ can be approached via the comparison theory for projections in (matrix algebras over) the $C^*$-algebra $C_r^*(\Gamma,\sigma)$. This approach is reminiscent of the method \cite{bekka2004square} towards Theorem \ref{thm:density_intro2} using von Neumann algebras.
However, in contrast to the setting of von Neumann algebras, the comparison theory for projections in $C^*$-algebras is remarkably subtle, see, e.g.,\ \cite{Bla,Str} and the references therein. Among others, this is caused by the fact that $C^*$-algebras, in contrast to von Neumann algebras, might not possess sufficiently many projections for a satisfactory comparison theory. This has lead, among others, to the more general notion of Cuntz subequivalence of positive elements in  (matrix algebras over) a $C^*$-algebra \cite{Cun}, and the corresponding comparison then concerns whether Cuntz subequivalence of positive elements can be described via tracial states. A $C^*$-algebra satisfying such a property is said to have \emph{strict comparison of positive elements} and it is this notion, which is stronger than strict comparison of projections, that forms a central ingredient in the present paper (see \Cref{subsec:strict_comp_pos}).
We mention that for irrational non-commutative tori, the presence of strict comparison of projections was proven in \cite{Ri88} (see also \cite{Bla}) and (implicitly) exploited in \cite{jakobsen2021duality} for the study of Gabor frames; see Section \ref{sec:heisenberg}. 

A part of the (revised) Toms--Winter conjecture predicts that for any unital separable, simple, nuclear, infinite-dimensional $C^*$-algebra, strict comparison of positive elements is equivalent to a regularity property known as \emph{finite nuclear dimension} (see e.g.\ \cite[p.\ 302]{Str}). 
The implication from finite nuclear dimension to strict comparison of positive elements is known\footnote{In fact, under various additional assumptions (which all hold in our setting), the full equivalence is known, see, e.g., \cite{Str} and our discussion after Theorem \ref{WR}.},  and follows by combining results of Rørdam \cite{Ror1} and Winter \cite{W}. Therefore, for 
establishing strict comparison of positive elements in our setting, it would suffice to prove that the twisted group $C^*$-algebra $C_r^*(\Gamma,\sigma)$ has finite nuclear dimension. We prove the even stronger property that $C_r^*(\Gamma,\sigma)$ has finite decomposition rank:

\begin{theorem}\label{thm:intro_fnd}
Let $\Gamma$ be a finitely generated, nilpotent group and let $\sigma$ be a 2-cocycle  on $\Gamma$. Then the twisted group $C^*$-algebra $C_r^*(\Gamma,\sigma)$ has finite decomposition rank, in particular finite nuclear dimension. Hence $C_r^*(\Gamma,\sigma)$ has strict comparison of positive elements, and therefore of projections, whenever $(\Gamma,\sigma)$ satisfies Kleppner's condition.
\end{theorem}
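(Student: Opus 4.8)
The plan is to establish finite decomposition rank for $C_r^*(\Gamma,\sigma)$ by reducing to the structure theory of finitely generated nilpotent groups and invoking permanence properties of decomposition rank. The key structural fact I would exploit is that a finitely generated nilpotent group $\Gamma$ is \emph{polycyclic}, and in particular has a finite-index subgroup that is torsion-free; a torsion-free finitely generated nilpotent group admits a central series $\{1\} = \Gamma_0 \trianglelefteq \Gamma_1 \trianglelefteq \cdots \trianglelefteq \Gamma_k = \Gamma$ with each quotient $\Gamma_i/\Gamma_{i-1} \cong \Z$. This suggests an inductive approach: build $C_r^*(\Gamma,\sigma)$ from $C_r^*(\Gamma_{k-1}, \sigma')$ (for a suitably restricted cocycle) via a single extension by $\Z$, realized as a twisted crossed product. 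Concretely, using the isomorphism $C_r^*(\Gamma,\sigma) \cong C_r^*(\Gamma_{k-1},\sigma|_{\Gamma_{k-1}}) \rtimes_{\alpha,\sigma} \Z$ for an appropriate action $\alpha$ and scalar cocycle data, the induction reduces the problem to controlling how decomposition rank behaves under crossed products by $\Z$.

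The key external input I would use is the theorem of Hirshberg--Wu (building on work connecting Rokhlin-type dimension and decomposition rank for crossed products by $\Z$): if $A$ has finite decomposition rank and $\alpha$ is an automorphism such that $A \rtimes_\alpha \Z$ has suitable finite Rokhlin dimension (with commuting towers), then $A \rtimes_\alpha \Z$ again has finite decomposition rank. For the base case, $C_r^*(\Z,\sigma)$ with a scalar cocycle is just a (possibly rotation) algebra on the circle or a commutative $C^*$-algebra $C(\T)$, which has decomposition rank at most one. The crucial point at each inductive step is that the relevant automorphism of a twisted group $C^*$-algebra of a nilpotent group has the required finiteness of Rokhlin dimension; here I would appeal to the fact that actions arising from the central-series structure of nilpotent groups are sufficiently tame — in particular the ambient group remains amenable and the crossed product stays nuclear and quasidiagonal throughout, so that no obstruction to finite decomposition rank is introduced. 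Finally, to pass from a torsion-free finite-index subgroup $\Gamma_0 \leq \Gamma$ back to $\Gamma$ itself, I would use that $C_r^*(\Gamma,\sigma)$ is a finite (matrix-amplified) hereditary construction over $C_r^*(\Gamma_0, \sigma|_{\Gamma_0})$ — decomposition rank is preserved under passing to matrix algebras and to such finite extensions.

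I expect the main obstacle to be controlling the \emph{Rokhlin dimension} (or whatever covering-dimension-type invariant governs the crossed-product permanence) uniformly along the induction. Finite decomposition rank is a delicate invariant — it is not stable under arbitrary crossed products by $\Z$, and one genuinely needs the automorphisms to have good dynamical behavior. The nilpotency of $\Gamma$ should be exactly what guarantees this: the central extensions are ``unipotent'' in flavor and do not produce the kind of wild, high-dimensional dynamics that would force decomposition rank to blow up. Making this precise — identifying the correct permanence theorem and verifying its hypotheses for the specific automorphisms coming from the nilpotent central series with a twisting cocycle present — is where the real work lies. The concluding sentence of the theorem, namely strict comparison of positive elements (hence of projections) under Kleppner's condition, then follows immediately: Kleppner's condition makes $C_r^*(\Gamma,\sigma)$ simple (and it is unital, separable, nuclear, and infinite-dimensional in the nontrivial cases), so the known implication from finite nuclear dimension to strict comparison via the results of Rørdam and Winter applies directly, and strict comparison of positive elements implies strict comparison of projections as a formal consequence.
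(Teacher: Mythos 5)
Your proposal identifies the right external inputs for the final step (Packer's theorem that Kleppner's condition gives simplicity and a unique trace for nilpotent $\Gamma$, and the R{\o}rdam--Winter chain from finite nuclear dimension to strict comparison), and that part matches the paper. But the core of your argument --- the induction along a central series via twisted crossed products by $\Z$ --- has a genuine gap that cannot be repaired along the lines you sketch. There is no permanence theorem asserting that finite decomposition rank passes to crossed products by $\Z$; indeed it demonstrably fails in general, since finite decomposition rank forces quasidiagonality while crossed products of AF algebras by $\Z$ can be purely infinite (e.g.\ $\mathcal{O}_n \otimes \mathcal{K}$ arises as a crossed product of a stabilized AF algebra by $\Z$). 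The Rokhlin-dimension theorems you invoke require finite Rokhlin dimension of the automorphism, and this is exactly what the nilpotent central-series automorphisms lack: already for the discrete Heisenberg group one has $C^*(\Gamma) \cong C(\T^2) \rtimes_{\phi} \Z$ with $\phi$ the shear $(z,w) \mapsto (z, zw)$, which has fixed points (all $(1,w)$), hence infinite Rokhlin dimension. So nilpotency does \emph{not} make the dynamics ``tame'' in the Rokhlin sense --- beyond the abelian case it destroys freeness of the induced action on the spectrum, and this is precisely the difficulty that makes the Eckhardt--Gillaspy--McKenney theorem a substantial result, proved by entirely different (representation-theoretic) methods rather than crossed-product permanence. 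Your passage from a torsion-free finite-index subgroup back to $\Gamma$ is also unsupported: decomposition rank is not monotone under passing to arbitrary subalgebras of matrix amplifications, and finite-group crossed-product permanence again requires Rokhlin-type hypotheses in general.

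The paper avoids all of this with a short reduction in a different direction: by a recent result of Hatui--Narayanan--Singla \cite{HNS}, every finitely generated nilpotent group $\Gamma$ admits a representation group (Schur cover) $\widetilde\Gamma$ that is again finitely generated and nilpotent; every $\sigma$-projective unitary representation of $\Gamma$ --- in particular $\lambda_\Gamma^\sigma$ --- lifts to a genuine unitary representation of $\widetilde\Gamma$ generating the same $C^*$-algebra, so $C_r^*(\Gamma,\sigma)$ is a $*$-homomorphic image of $C^*(\widetilde\Gamma)$. Finite decomposition rank of $C^*(\widetilde\Gamma)$ is then quoted directly from the untwisted theorem of \cite{EGMK}, and decomposition rank passes to quotients by \cite{KW}. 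In other words, the paper does not reprove or induct through the hard untwisted result; it removes the twist by enlarging the group, whereas your plan would have to rebuild the untwisted theorem by an inductive mechanism that is known to break. (A minor further point: since $H^2(\Z,\T)$ is trivial, your base case is simply $C(\T)$; no rotation algebras occur at that stage.)
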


Finite decomposition rank of 
group $C^*$-algebras associated to finitely generated, nilpotent groups is due to Eckhardt, Gillaspy and McKenney \cite{EGMK,EMK}. Our proof of Theorem \ref{thm:intro_fnd} relies on \cite{EGMK} and extends their result to the twisted case via the theory of representation groups; in particular, we use a recent result of Hatui, Narayanan and Singla \cite[Theorem 3.5]{HNS}, cf.\ \Cref{sec:nilpotent} for precise details.

Theorems \ref{thm:module_intro}, \ref{thm:intro_module_frame} and \ref{thm:intro_fnd} are the essential ingredients in our proof of Theorem \ref{thm:main_intro}. 

An additional result of independent interest, at least to operator algebraists, is the following theorem, which provides new examples of classifiable $C^*$-algebras in the sense of the classification program for simple separable nuclear $C^*$-algebras (see e.g. \cite[Chapter 18]{Str} and references therein).

\begin{theorem}\label{thm:classifiable}
Let $\Gamma$ be a finitely generated, nilpotent group and let $\sigma$ be a 2-cocycle on $\Gamma$ such that $(\Gamma,\sigma)$ satisfies Kleppner's condition. Then $C_r^*(\Gamma,\sigma)$ is a unital, separable, simple $C^*$-algebra with finite nuclear dimension that satisfies the UCT, hence is classifiable by the Elliott invariant. Moreover, $C_r^*(\Gamma,\sigma)$ has stable rank one.
\end{theorem}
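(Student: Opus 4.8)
The plan is to assemble \Cref{thm:classifiable} as a corollary of \Cref{thm:intro_fnd} together with standard structural facts about twisted group $C^*$-algebras of finitely generated nilpotent groups. First I would record the elementary properties that do not require the classification machinery. Since $\Gamma$ is finitely generated, it is countable, so $C_r^*(\Gamma,\sigma)$ is separable; it is unital because $\Gamma$ is discrete (the unit being $\lambda_\Gamma^\sigma(e)$). As explained in the \emph{Methods} subsection, Kleppner's condition on $(\Gamma,\sigma)$ is equivalent to $C_r^*(\Gamma,\sigma)$ being simple, and amenability of $\Gamma$ gives $C_r^*(\Gamma,\sigma) \cong C^*(\Gamma,\sigma)$, so there is no ambiguity between the reduced and full algebras. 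Finite nuclear dimension is immediate from \Cref{thm:intro_fnd}. This disposes of every adjective in the first sentence except the UCT.

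Next I would address the UCT, which is the one genuinely nontrivial hypothesis of the Elliott classification theorem that is not handed to us directly by \Cref{thm:intro_fnd}. The cleanest route is to exploit amenability: $\Gamma$ is amenable (being nilpotent, hence solvable), so $C_r^*(\Gamma,\sigma)$ is nuclear, and nuclear $C^*$-algebras arising from amenable groups satisfy the UCT. More concretely, one can use that a finitely generated nilpotent group has a central series with infinite cyclic or finite cyclic quotients, and build $C_r^*(\Gamma,\sigma)$ up by iterated twisted crossed products by $\Z$ and by finite groups starting from $\C$; since the class of separable nuclear $C^*$-algebras satisfying the UCT is closed under crossed products by $\Z$ (by the Pimsner–Voiculescy/Kasparov machinery) and contains the type I building blocks, the UCT follows. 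Alternatively, I would invoke the fact that $C_r^*(\Gamma,\sigma)$ is KK-equivalent to a commutative (hence UCT) algebra via the Baum–Connes picture for amenable groups. Either way, combining UCT with the unital, separable, simple, finite-nuclear-dimension properties lets me cite the classification theorem (as in \cite[Chapter 18]{Str}) to conclude that $C_r^*(\Gamma,\sigma)$ is classified by its Elliott invariant.

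Finally I would establish stable rank one. The most efficient argument is that strict comparison (already supplied by \Cref{thm:intro_fnd}) together with the structural regularity of the algebra forces stable rank one: a unital, simple, separable, nuclear, finite-dimensional $C^*$-algebra with finite nuclear dimension that is stably finite has stable rank one. Here stable finiteness follows because $C_r^*(\Gamma,\sigma)$ carries a faithful tracial state—namely the canonical trace $a \mapsto \langle a\,\delta_e, \delta_e\rangle$ coming from the regular representation—and a unital simple $C^*$-algebra with a faithful trace and finite nuclear dimension is quasidiagonal and stably finite. One then appeals to the theorem of Rørdam (stable finiteness plus $\mathcal{Z}$-stability, the latter being a consequence of finite nuclear dimension and simplicity via the Toms–Winter implications invoked just before \Cref{thm:intro_fnd}) giving stable rank one.

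The main obstacle I anticipate is verifying the UCT cleanly: while amenability makes it morally automatic, producing a citable justification requires either committing to the iterated-crossed-product decomposition of $C_r^*(\Gamma,\sigma)$ (which forces me to track how the cocycle $\sigma$ behaves under the central series and to ensure each stage stays within the UCT class) or invoking a general KK-theoretic result for twisted group algebras of amenable groups. The crossed-product route is the more self-contained and is the one I would carry out in detail, since the central series of a finitely generated nilpotent group is explicit and the behavior of twisted group algebras under such extensions is well documented; everything else in the statement is a direct bookkeeping consequence of \Cref{thm:intro_fnd} and the standard classification theorem.
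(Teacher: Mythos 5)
Your overall architecture is sound, and most of the statement is indeed bookkeeping: separability and unitality are immediate, simplicity (with unique trace) follows from Kleppner's condition by Packer's theorem \cite{P} exactly as the paper uses it, finite nuclear dimension is \Cref{thm:intro_fnd}, and your stable rank one argument (faithful canonical trace $\Rightarrow$ stable finiteness, finite nuclear dimension $\Rightarrow$ $\mathcal{Z}$-stability by Winter \cite{W}, then R{\o}rdam's dichotomy \cite[Theorem 6.7]{Ror2}) is precisely the chain packaged in \Cref{WR} and \Cref{fg-nil4}. The paper's proof of the classifiability claim is then a two-line appeal to \cite[Corollary D]{TWW}, as in your plan (modulo the infinite-dimensionality caveat, which the paper handles with a parenthetical ``$\Gamma$ infinite'' and you omit; note your ``finite-dimensional'' is presumably a typo for ``infinite-dimensional'').

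The genuine gap is in the UCT route you commit to carrying out in detail. A finitely generated nilpotent group need not be torsion-free, so your central-series decomposition of $C_r^*(\Gamma,\sigma)$ necessarily involves stages that are (twisted) crossed products by \emph{finite} cyclic groups, and it is \emph{not} known that the UCT class is closed under crossed products by finite groups: by work of Barlak and Szab\'o, the general UCT problem is equivalent to the UCT for crossed products of $\mathcal{O}_2$ by certain actions of $\Z/2$ and $\Z/3$. The $\Z$-stages are fine (Packer--Raeburn stabilization plus Pimsner--Voiculescu), but the finite-group stages cannot be justified by any closure property currently available, so the induction breaks exactly where torsion appears. Your fallback is also stated too broadly: ``nuclear $C^*$-algebras arising from amenable groups satisfy the UCT'' is, as a general principle, the open UCT problem; the correct citable statement is Tu's theorem that $C^*$-algebras of amenable groupoids satisfy the UCT, which does apply here since $C_r^*(\Gamma,\sigma)$ is a direct summand of $C^*$ of the amenable central extension $\T \times_\sigma \Gamma$. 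That repaired fallback would give a valid proof, and it is genuinely different from the paper's route: the paper instead passes to a finitely generated nilpotent representation group $\widetilde\Gamma$ (a Schur cover, \cite[Theorem 3.5]{HNS}, \Cref{fg-nil}), untwists $\pi$ to a genuine irreducible representation $\widetilde\pi$ of $\widetilde\Gamma$, invokes Eckhardt--Gillaspy \cite[Theorem 3.5]{EG} for the UCT of $C^*(\widetilde\pi(\widetilde\Gamma))$, and uses simplicity (Kleppner plus Packer) to identify $C_r^*(\Gamma,\sigma) \simeq C^*(\pi(\Gamma))$ (\Cref{fg-nil2}, \Cref{fg-nil3}); this also yields decomposition rank at most $1$ and quasidiagonality, which the Tu-based route does not. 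So: fix the UCT step by replacing the crossed-product induction with Tu's theorem (or with the paper's representation-group argument), and the rest of your proposal stands.
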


We note that there are remarkably few examples in the literature of pairs $(\Gamma, \sigma)$ (with $\Gamma$ countable) such that $C_r^*(\Gamma, \sigma)$ has stable rank one (cf.~the discussion in \cite[p.~293 and Appendix A]{BO}). Within the class of countable amenable groups, our result about this property in 
Theorem \ref{thm:classifiable}
%Corollary \ref{fg-nil4} 
has previously only been known for finitely generated free abelian groups (this may be deduced from \cite[Theorem 1.5]{BKR}, which deals with simple noncommutative tori), and for some examples of finitely generated nilpotent groups (see the comment after Theorem 4.7 in \cite{OsPhi2006}). 
For a countable nilpotent group and  a 2-cocycle $\sigma$ on $\Gamma$ such that $(\Gamma, \sigma)$ satisfies Kleppner's condition,  Osaka and Phillips raise in \cite[Problem 4.8]{OsPhi2006} the question whether $C^*(\Gamma, \sigma)\simeq C_r^*(\Gamma, \sigma)$ has real rank zero and stable rank one, and whether the order on projections over $C^*(\Gamma, \sigma)$ is determined by traces (this is equivalent to the one we call strict comparison of projections). 
Our Theorems \ref{thm:intro_fnd} and \ref{thm:classifiable} provide a partial answer to their question. 

\subsection{Extensions} 
The proof of Theorem \ref{thm:main_intro} makes a fundamental use of the presence of strict comparison of projections in twisted group $C^*$-algebras $C^*(\Gamma, \sigma)$ of finitely generated nilpotent groups $\Gamma$, which we show by
relying on the paper \cite{EGMK}.
The results in \cite{EGMK} are, however, also valid for virtually nilpotent groups, 
and it might be that a version of Theorem \ref{thm:main_intro} is also valid for more general (classes of) groups of polynomial growth. 
For such an extension, several parts in our approach, among others, the explicitly constructed Hilbert $C^*$-modules (Theorem \ref{thm:module_intro}) would
require different arguments, while there are also several ingredients that do currently not have analogues for virtual nilpotent groups, among others, the existence of suitable representation groups \cite{HNS}. With an eye on such possible extensions, we prove several auxiliary results in a slightly more general setting than strictly needed for our main result Theorem \ref{thm:main_intro}, provided they do not require additional arguments.

As another extension, we mention that our current approach also allows a version of Theorem \ref{thm:main_intro} for Gabor systems on general (compactly generated) locally compact abelian groups, which was left open in \cite{jakobsen2021duality}. For this extension, the module constructed in Theorem \ref{thm:module_intro} could be replaced by the Heisenberg modules 
of \cite{Ri88}.

\subsection{Outline} 
Section \ref{sec:prelim} provides preliminary results on frames, square-integrable representations and group operator algebras. Hilbert $C^*$-modules and their generating sets are discussed in Section \ref{sec:hilbertcmodule}. A general construction of Hilbert $C^*$-modules associated to projective representations of discrete groups is outlined in Section \ref{sec:modules_projective}. Section \ref{sec:strict_comparison} is devoted to strict comparison of positive elements in $C^*$-algebras. In particular, the presence of strict comparison of projections in simple twisted group $C^*$-algebras (\Cref{thm:intro_fnd}) associated to finitely generated nilpotent groups is proven in Section \ref{sec:nilpotent}, along with \Cref{thm:classifiable}. In Section \ref{sec:lattice_orbits} the results obtained in previous sections are applied 
to the setting of nilpotent Lie groups to prove \Cref{thm:main_intro} (cf.\ Theorem \ref{thm:existence}), along with \Cref{thm:module_intro} and \Cref{thm:intro_module_frame}. Lastly, we discuss some examples in \Cref{sec:examples}.

\subsection*{Notation} The notation $\mathbb{N}_0$ will be used for the natural numbers including zero $0$. The complex numbers without zero will be denoted by $\mathbb{C}^{\times} = \mathbb{C} \setminus \{0\}$.
The cardinality of a set $X$ is denoted by $|X| \in [0,\infty]$. For functions $f_1, f_2 : X \to [0,\infty)$, we write $f_1 \asymp f_2$ if there exist constants $\lfb, \ufb > 0$ such that $f_1 (x) \leq \lfb f_2 (x)$ and $f_2(x) \leq \ufb f_1 (x)$ for all $x \in X$.

\section{Preliminaries} \label{sec:prelim}

\subsection{Frames and Riesz sequences} \label{sec:frames}
Let $J$ be a countable index set. A family $(\vect_j)_{j \in J}$ of vectors in a Hilbert space $\Hi$ is called a \emph{frame} for $\Hi$ if 
\begin{align} \label{eq:frame_ineq}
     \| \veco \|_{\Hi}^2 \asymp \sum_{j \in J}|\langle \veco, \vect_j \rangle|^2  \quad  \text{for all} \quad \veco \in \Hi. 
\end{align}
A frame $(\vect_j)_{j \in J}$ is called a \emph{Parseval frame} if \eqref{eq:frame_ineq} can be chosen to be equality.
A system $(\vect_j)_{j \in J}$ is called a \emph{Bessel sequence} if the associated \emph{analysis operator} $\analysis \colon \Hi \to \ell^2(J)$ given by
\[ \analysis \veco = ( \langle \veco, \vect_j \rangle )_{j \in J} \;\;\; \text{for $\veco \in \Hi$} \]
is a bounded, linear operator. Its adjoint, the \emph{synthesis operator} $\synthesis \colon \ell^2(J) \to \Hi$, is determined by $\synthesis e_j = \vect_j$, where $e_j$ denotes the standard basis vector of $\ell^2(J)$ corresponding to an index $j \in J$. The \emph{frame operator} associated to $(\vect_j)_{j \in J}$ is given by $\frameop = \analysis^* \analysis \colon \Hi \to \Hi$. The system $(\vect_j)_{j\in J}$ is a frame for $\Hi$ if and only if $\frameop$ is invertible.

The family $(\vect_j)_{j\in J}$ is called a \emph{Riesz sequence} in $\Hi$ if 
\[  \| c \|^2_{\ell^2} \asymp \Big\| \sum_{j \in J} c_j \vect_j \Big\|^2  \;\;\; \text{for all $c = (c_j)_{j \in J} \in \ell^2(\Gamma)$.} \]
Alternatively, $(\vect_j)_{j \in J}$ is a Riesz sequence if and only if the associated \emph{Gramian operator} $\gramian := \synthesis^*\synthesis \colon \ell^2(J) \to \ell^2(J)$ is invertible, where $\synthesis$ is the synthesis operator of the sequence $(\vect_j)_j$.

For background and further results on frames and Riesz sequences, see, e.g., \cite{christensen2016introduction, young2001an}.

\subsection{Cocycles and projective representations} \label{sub:projective}

Throughout, $G$ denotes a second countable, locally compact, unimodular group with identity element $e$. We assume that a Haar measure $\mu_G$ on $G$ is fixed and let $L^p(G)$ be the associated Lebesgue space for $p \in [1,\infty]$.

By a \emph{cocycle} on $G$ we will mean a Borel measurable map $\sigma \colon G \times G \to \T$ that satisfies the identities
\begin{enumerate}
    \item $\sigma(x,y)\sigma(xy,z) = \sigma(x,yz)\sigma(y,z)$ \;\;\; \text{for all $x,y,z \in G$,}
    \item $\sigma(e,e) = 1$.
\end{enumerate}
Such maps are frequently called normalized $2$-cocycles, or multipliers, in the literature. We denote by $Z^2(G, \T)$ the set of all such cocycles.

Given a cocycle $\sigma$ on $G$, a \emph{$\sigma$-projective unitary representation} $\pi$ of $G$ on a Hilbert space $\Hip$ is a map $\pi \colon G \to \mathcal{U}(\Hip)$ (where $\mathcal{U}(\Hip)$ denotes the unitary operators on $\Hip$) such that
\[ \pi(x)\pi(y) = \sigma(x,y)\,\pi(xy) \;\;\; \text{for all $x,y \in G$.} \]
We will always assume that representations are \emph{measurable}, i.e.,\ $x \mapsto \pi(x) \veco$ is a Borel measurable function on $G$ for every $\veco \in \Hip$. 
A subspace of $\Hip$ is said to be invariant under $\pi$ if it invariant under $\pi(x)$ for every $x \in G$. We say that $\pi$ is \emph{irreducible} if $\{0\}$ and $\Hip$ are the only closed subspaces of $\Hip$ which are invariant under $\pi$. 

Given $\veco,\vect \in \Hip$, we can form the function $C_{\vect}\veco \colon G \to \C$ given by
\[ C_{\vect} \veco(x) = \langle \veco, \pi(x) \vect \rangle \;\;\; \text{for $x \in G$.} \]
Such functions on $G$ are called \emph{matrix coefficients} associated to $\pi$. If $\veco = \vect$, then $C_{\veco} \veco$ is called a \emph{diagonal} matrix coefficient. Matrix coefficients satisfy the relation
\begin{equation}
    C_{\vect} (\pi(x)\veco)(y) = \sigma(x,x^{-1}y) C_{\vect} \veco (x^{-1}y) \;\;\; \text{for $\veco,\vect \in \Hip$ and $x,y \in G$.} \label{eq:matrix_coefficient_intertwining}
\end{equation}
The $\sigma$-twisted left regular representation of $G$ is the $\sigma$-projective unitary representation of $G$ on $L^2(G)$ given by
\[ \big(\lambda_G^\sigma(x)\veco\big)(y) = \sigma(x,x^{-1}y)\,\veco(x^{-1}y) \;\;\; \text{for $x,y \in G$, $\veco \in L^2(G)$.} \]
In terms of $\lambda^\sigma_G$, \eqref{eq:matrix_coefficient_intertwining} can be stated as the intertwining relation $C_{\vect} (\pi(x)\veco) = \lambda^{\sigma}_G(x) C_{\vect} \veco$, provided that $C_{\vect}\veco \in L^2(G)$.

An irreducible, projective unitary representation $\pi$ is called \emph{square-integrable} if there exist nonzero $\veco, \vect \in \Hip$ such that $\int_G | \langle \veco, \pi(x) \vect \rangle |^2 d\mu_G (x) < \infty$. 
 In that case, there exists a unique $d_{\pi} > 0$ called the \emph{formal dimension} of $\pi$ (depending on the Haar measure on $G$) such that
\begin{align} \label{eq:ortho} \int_G \langle \veco, \pi(x) \vect \rangle \overline{ \langle \veco', \pi(x)\vect' \rangle } \; d\mu_G (x) = \frac{1}{d_{\pi}} \langle \veco, \veco' \rangle \overline{ \langle \vect, \vect' \rangle } \;\;\; \text{for all $\veco,\veco',\vect,\vect' \in \Hip$.} 
\end{align}
Square-integrability of $\pi$ implies that 
for each $\vect \in \Hip$,
the \emph{coefficient operator} $C_{\vect} \colon \Hip \to L^2(G)$, mapping each $\veco \in \Hip$ to 
$C_\vect \veco$, is a well-defined, bounded, linear operator. 
Moreover, $d_\pi^{\,1/2} C_{\vect}$ is an isometry which 
intertwines $\pi$ and $\lambda_G^\sigma$, cf.~\eqref{eq:matrix_coefficient_intertwining}, and thus realizes $\pi$ as a subrepresentation of $\lambda_G^\sigma$. 

For more details on projective and square-integrable representations, cf.~\cite{varadarajan1986geometry, Ani06, Pac2, Ra98}.

\subsection{Lattices}
Let $\Gamma \leq G$ be a discrete subgroup of a second countable unimodular locally compact group $G$. 
A left (resp.\ right) fundamental domain of $\Gamma$ in $G$ is a Borel set $\Sigma \subset G$ such that $G = \Gamma \cdot \Sigma$ and $\gamma \Sigma \cap \gamma' \Sigma = \emptyset$ (resp.\ $G= \Sigma \cdot \Gamma$ and $\Sigma \gamma \cap \Sigma \gamma' = \emptyset$) for all $\gamma, \gamma' \in \Gamma$ with $\gamma \neq \gamma'$. The discrete $\Gamma \leq G$ is called a \emph{lattice} if it admits a left or right fundamental domain of finite measure. Alternatively, a discrete subgroup $\Gamma \leq G$ is a lattice if, and only if, the quotient $G/\Gamma$ carries a finite $G$-invariant Radon measure. By Weil's integral formula, $\vol(G/\Gamma) = \mu_G (\Sigma)$ for any choice of fundamental domain $\Sigma \subset G$ for $\Gamma$. See, e.g.,~\cite{raghunathan1972discrete} for more details and properties.

\subsection{Twisted group operator algebras}\label{subsec:group_operator_algebras} 
Let $\Gamma$ be a countable discrete group, and let $\sigma$ be a cocycle on $\Gamma$. The \emph{$\sigma$-twisted convolution} of two functions $a,b \colon \Gamma \to \C$ in $\ell^1(\Gamma)$ is defined by
\[ (a \ast_{\sigma} b)(\gamma') = \sum_{\gamma \in \Gamma} \sigma(\gamma,\gamma^{-1}\gamma')a(\gamma)b(\gamma^{-1}\gamma') \;\;\; \text{for $\gamma' \in \Gamma$.} \]
We often simply write $\ast = \ast_{\sigma}$. The \emph{$\sigma$-twisted involution} of $a$ is given by
\[ a^*(\gamma) = \overline{ \sigma(\gamma,\gamma^{-1}) a(\gamma^{-1}) } \;\;\; \text{for $\gamma \in \Gamma$.} \]
The Banach space $\ell^1(\Gamma)$ becomes a unital Banach $*$-algebra with respect to $\sigma$-twisted convolution and $\sigma$-twisted involution, which we denote by $\ell^1(\Gamma,\sigma)$.

The \emph{full $\sigma$-twisted group $C^*$-algebra of $\Gamma$} is the completion $C^*(\Gamma,\sigma)$ of $\ell^1(\Gamma,\sigma)$ with respect to the universal $C^*$-norm
\[ \| a \|_{u} = \sup_\pi \| \pi(a) \| \;\;\; \text{for $a \in \ell^1(\Gamma,\sigma)$,} \]
where the supremum is taken over all nondegenerate $*$-representations of $\ell^1(\Gamma,\sigma)$ on a Hilbert space. 
We will frequently consider $\ell^1(\Gamma, \sigma)$ as embedded in $C^*(\Gamma, \sigma)$.
There is a bijective correspondence $\pi \mapsto \widehat \pi$ between $\sigma$-projective unitary representations of $\Gamma$ and nondegenerate $*$-representations of $C^*(\Gamma, \sigma)$, 
where $\widehat\pi$ is determined from $\pi $ by \[\widehat\pi(a) = \sum_{\gamma \in \Gamma} a(\gamma) \pi(\gamma) \]
for all $a\in \ell^1(\Gamma, \sigma)$. As $\pi$ is irreducible if and only if %$\pi(\Gamma)'= \C I_{\Hip}$ if and only if
$\widehat \pi$ is irreducible, one deduces from the Gelfand-Naimark theory  for $C^*$-algebras that there always exists enough $\sigma$-projective irreducible unitary representations of $\Gamma$ to separate its elements.   

Let $\lambda_\Gamma^\sigma$ be the $\sigma$-twisted left regular representation of $\Gamma$ on $\ell^2(\Gamma)$. The $C^*$-algebra generated by $\lambda_\Gamma^\sigma(\Gamma) \subseteq \B(\ell^2(\Gamma))$ is called the \emph{reduced $\sigma$-twisted group $C^*$-algebra of $\Gamma$} and is denoted by $C_r^*(\Gamma,\sigma)$. Equivalently, we have $C_r^*(\Gamma,\sigma) = \widehat{\lambda_\Gamma^\sigma} \big(C^*(\Gamma, \sigma)\big)$. Similarly, the von Neumann algebra generated by $\lambda_\Gamma^\sigma(\Gamma) \subseteq \B(\ell^2(\Gamma))$ is called the \emph{$\sigma$-twisted group von Neumann algebra of $\Gamma$} and is denoted by $\vN(\Gamma,\sigma)$.
It is equipped with a faithful, normal tracial state $\tau$ given by
\[ \tau(a) = \langle a \delta_e, \delta_e \rangle \;\;\; \text{for $a \in L(\Gamma,\sigma)$},\]
where $\delta_e \in \ell^2(\Gamma)$ denotes the characteristic function of $\{e\}$ in $\Gamma$.  
We refer to $\tau$ as the \emph{canonical tracial state}, and denote its restriction to $C_r^*(\Gamma,\sigma)$ also by $\tau$.

 The canonical map $\widehat{\lambda_\Gamma^\sigma}: C^*(\Gamma,\sigma) \to C_r^*(\Gamma,\sigma)$ is always faithful on $\ell^1(\Gamma, \sigma)$. Hence we will often consider $\ell^1(\Gamma, \sigma)$ as embedded in $C_r^*(\Gamma, \sigma)$ via this map.   If the group $\Gamma$ is amenable, then 
 $\widehat{\lambda_\Gamma^\sigma}$ is a $*$-isomorphism. In this case, e.g.,~when $\Gamma$ is nilpotent, we will frequently identify $C^*(\Gamma,\sigma)$ with $C_r^*(\Gamma,\sigma)$.
For additional information about the operator algebras associated to $(\Gamma, \sigma)$, the reader may consult, e.g., \cite{BC, Pac2, Zel} and references therein. 

An element $\gamma \in \Gamma$ is called \emph{$\sigma$-regular} if we have $\sigma(\gamma,\gamma') = \sigma(\gamma',\gamma)$ whenever $\gamma' \in \Gamma$ and $\gamma \gamma' = \gamma' \gamma$. If $\gamma$ is $\sigma$-regular, then every element in the conjugacy class of $\gamma$ is $\sigma$-regular, hence it makes sense to talk about $\sigma$-regular conjugacy classes. The pair $(\Gamma,\sigma)$ is said to satisfy \emph{Kleppner's condition} if every nontrivial, $\sigma$-regular conjugacy class is infinite. The twisted group von Neumann algebra $\vN(\Gamma,\sigma)$ is a factor (i.e., has a trivial center) if and only if $(\Gamma,\sigma)$ satisfies Kleppner's condition, cf.~\cite[Theorem 2]{Kle62}.
Kleppner's argument shows that  $C_r^*(G, \sigma)$ has a nontrivial center whenever  $(G, \sigma)$ does not satisfy Kleppner's condition. Hence this condition is necessary  
for $C_r^*(G, \sigma)$ to be simple (i.e., to have no non-trivial ideals), but it is not always sufficient, cf.~\cite{BO}. 
See also \cite{P, Om14} for other results relying on this condition.

A function $a \colon \Gamma \to \C$ is said to be \emph{$\sigma$-positive definite} if
\begin{equation}
\sum_{i,j=1}^n c_i \overline{c_j} a(\gamma_j \gamma_i^{-1}) \sigma( \gamma_j \gamma_i^{-1}, \gamma_i) \geq 0 \;\;\; \text{for all $\gamma_1, \ldots, \gamma_n \in \Gamma$, $c_1, \ldots, c_n \in \C$} ; \label{eq:pos_def}
\end{equation}
see \cite{BC, LeXi19} for a slightly different definition (where $a$ would be called $\bar\sigma$-positive definite).

The following characterization of $\sigma$-positive functions will be convenient for our purposes.   
The result is part of the folklore for trivial cocycles $\sigma \equiv 1$. 

\begin{proposition}\label{prop:pos_def}
Assume $a \in \ell^1(\Gamma,\sigma)$. Then $a$ is $\sigma$-positive definite as a function on $\Gamma$ if and only if $\widehat{\lambda_\Gamma^\sigma}(a)$ is positive as an element of $C_r^*(\Gamma,\sigma)$. 
It follows that if $a \in \ell^1(\Gamma, \sigma)$ is a diagonal matrix coefficient associated to a $\sigma$-projective unitary representation of $\Gamma$, then $\widehat{\lambda_\Gamma^\sigma}(a)$ is positive in $C_r^*(\Gamma,\sigma)$.
\end{proposition}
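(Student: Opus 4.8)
The plan is to realize the $\sigma$-positive definiteness of $a$ as exactly the positivity of the quadratic form of the operator $T := \widehat{\lambda_\Gamma^\sigma}(a)$ on finitely supported vectors of $\ell^2(\Gamma)$, and then to upgrade this to genuine $C^*$-positivity by a density argument together with spectral permanence.

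First I would record the matrix coefficients of $T$ in the canonical basis. Since $\lambda_\Gamma^\sigma(\gamma)\delta_{\gamma'} = \sigma(\gamma,\gamma')\delta_{\gamma\gamma'}$ and $T = \sum_{\gamma} a(\gamma)\lambda_\Gamma^\sigma(\gamma)$, a one-line computation gives $\langle T\delta_{\gamma_i},\delta_{\gamma_j}\rangle = a(\gamma_j\gamma_i^{-1})\sigma(\gamma_j\gamma_i^{-1},\gamma_i)$. Hence for $\xi = \sum_{i=1}^n c_i\delta_{\gamma_i}$ (with distinct $\gamma_i$) one has $\langle T\xi,\xi\rangle = \sum_{i,j} c_i\overline{c_j}\,a(\gamma_j\gamma_i^{-1})\sigma(\gamma_j\gamma_i^{-1},\gamma_i)$, which is precisely the sum appearing in the definition of $\sigma$-positive definiteness. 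As the finitely supported vectors are exactly the $\xi$ of this form, $a$ is $\sigma$-positive definite if and only if $\langle T\xi,\xi\rangle \geq 0$ for all finitely supported $\xi$.

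Next I would pass to all of $\ell^2(\Gamma)$. Because $a \in \ell^1(\Gamma,\sigma)$, the operator $T$ is bounded, so by density of the finitely supported vectors and continuity the condition $\langle T\xi,\xi\rangle \geq 0$ extends to every $\xi \in \ell^2(\Gamma)$; on a complex Hilbert space this forces $T = T^*$ and makes $T$ a positive operator. Spectral permanence (the spectrum of an element of a $C^*$-subalgebra of $\B(\ell^2(\Gamma))$ agrees with its spectrum in $\B(\ell^2(\Gamma))$) then shows that $T$ is positive as an element of $C_r^*(\Gamma,\sigma)$ if and only if it is a positive operator, which completes the equivalence.

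For the final assertion I would check that a diagonal matrix coefficient $a(\gamma) = \langle v,\pi(\gamma)v\rangle$ of a $\sigma$-projective unitary representation $\pi$ is always $\sigma$-positive definite, regardless of integrability. Rewriting $\pi(\gamma_j\gamma_i^{-1}) = \sigma(\gamma_j\gamma_i^{-1},\gamma_i)\pi(\gamma_j)\pi(\gamma_i)^*$ and using $|\sigma| \equiv 1$, the product collapses to $a(\gamma_j\gamma_i^{-1})\sigma(\gamma_j\gamma_i^{-1},\gamma_i) = \langle\pi(\gamma_j)^* v,\pi(\gamma_i)^* v\rangle$, whence the defining sum equals $\big\|\sum_i \overline{c_i}\,\pi(\gamma_i)^* v\big\|^2 \geq 0$. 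Since by hypothesis $a \in \ell^1(\Gamma,\sigma)$, the equivalence just established yields that $\widehat{\lambda_\Gamma^\sigma}(a)$ is positive in $C_r^*(\Gamma,\sigma)$. I expect the cocycle bookkeeping in these two computations to be the only place requiring genuine care; the remaining steps, namely the density argument and the appeal to spectral permanence, are entirely standard.
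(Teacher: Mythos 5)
Your proposal is correct and follows essentially the same route as the paper: the identical computation of the matrix coefficients $\langle \widehat{\lambda_\Gamma^\sigma}(a)\delta_{\gamma_i},\delta_{\gamma_j}\rangle = a(\gamma_j\gamma_i^{-1})\sigma(\gamma_j\gamma_i^{-1},\gamma_i)$, reduction of positivity to the quadratic form on finitely supported vectors (the paper phrases this via the compressions $P_F T P_F$, you via density and continuity, with the paper leaving the agreement of positivity in $C_r^*(\Gamma,\sigma)$ and in $\B(\ell^2(\Gamma))$ implicit where you invoke spectral permanence explicitly), and the same cocycle manipulation collapsing the diagonal matrix coefficient sum to $\bigl\|\sum_i \overline{c_i}\,\pi(\gamma_i)^* v\bigr\|^2 \geq 0$. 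The cocycle bookkeeping in both computations checks out, so no gaps.
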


\begin{proof}
Given a finite subset $F = \{ \gamma_1, \ldots, \gamma_n \}$ of $\Gamma$, denote by $P_F$ the orthogonal projection of $\ell^2(\Gamma)$ onto $\spn \{ \delta_\gamma : \gamma \in F \}$. Then it is well-known that an operator $T \in \B(\ell^2(\Gamma))$ is positive if and only if $P_F T P_F$ is positive for any such $F$. 
Let $\eta \in \ell^2(\Gamma)$. Write $P_F \eta = \sum_{i=1}^n c_i \delta_{\gamma_i}$ for some scalars $c_1, \ldots, c_n \in \C$ and note that
\begin{align*}
    \langle \widehat{\lambda_\Gamma^\sigma}(a) \delta_{\gamma_i}, \delta_{\gamma_j} \rangle &= \sum_{\gamma \in \Gamma} a(\gamma) \langle \lambda_{\Gamma}^{\sigma}(\gamma) \delta_{\gamma_i}, \delta_{\gamma_j} \rangle 
    = \sum_{\gamma \in \Gamma} a(\gamma) \sigma(\gamma,\gamma_i) \langle \delta_{\gamma \gamma_i}, \delta_{\gamma_j} \rangle \\
    &= a(\gamma_j \gamma_i^{-1}) \sigma( \gamma_j \gamma_i^{-1}, \gamma_i) .
\end{align*}
Hence
\begin{align*}
\langle P_F \widehat{\lambda_\Gamma^\sigma}(a) P_F \eta, \eta \rangle &= \langle \widehat{\lambda_\Gamma^\sigma}(a) P_F \eta, P_F \eta \rangle 
= \sum_{i,j=1}^n c_i \overline{c_j}\, \langle \widehat{\lambda_\Gamma^\sigma}(a) \delta_{\gamma_i}, \delta_{\gamma_j} \rangle \\
&= \sum_{i,j=1}^n c_i \overline{c_j}\, a(\gamma_j \gamma_i^{-1}) \sigma( \gamma_j \gamma_i^{-1}, \gamma_i).
\end{align*}
Thus the condition $\widehat{\lambda_\Gamma^\sigma}(a) \geq 0$ is equivalent to the condition that the above expression is non-negative for all $\gamma_1, \ldots \gamma_n \in G$ and $c_1, \ldots, c_n \in \C$, i.e., to the $\sigma$-positive definiteness of $a$, as desired.

Assume now that the function $a \in \ell^1(\Gamma, \sigma)$ may be written as $a(\gamma)= \langle f, \pi(\gamma) f\rangle $ for some  
$\sigma$-projective unitary representation $\pi$ of $\Gamma$ on $\Hip$ and some $\veco \in \Hip$. Then for $\gamma_1, \ldots, \gamma_n \in \Gamma$ and $c_1, \ldots, c_n \in \Gamma$ we have that
\begin{align*}
    \sum_{i,j=1}^n c_i \overline{c_j}& \,\langle \veco, \pi(\gamma_j \gamma_i^{-1}) \veco \rangle \sigma(\gamma_j \gamma_i^{-1}, \gamma_i) = \sum_{i,j=1}^n c_i \overline{c_j} \langle \veco,  \pi(\gamma_j) \pi(\gamma_i^{-1})\veco \rangle \sigma(\gamma_j \gamma_i^{-1}, \gamma_i) \sigma(\gamma_j, \gamma_i^{-1}) \\
    &= \sum_{i,j=1}^n c_i \overline{c_j} \langle \pi(\gamma_i)^* \veco, \pi(\gamma_i^{-1})\veco \rangle \sigma(\gamma_i^{-1},\gamma_i) 
    = \Big\langle \sum_{i=1}^n c_i \pi(\gamma_i)^*\veco, \sum_{j=1}^n c_j \pi(\gamma_j)^* \veco \Big\rangle \geq 0 .
\end{align*}
This shows that 
$a$ is $\sigma$-positive definite, hence that 
$\widehat{\lambda_\Gamma^\sigma}(a)$ is positive in $C_r^*(\Gamma,\sigma)$.
\end{proof}

\section{Hilbert C*-modules, generating sets and localization} \label{sec:hilbertcmodule}

Throughout this section, $A$ denotes a unital $C^*$-algebra with unit $1_A$.

\subsection{Hilbert C*-modules}\label{subsec:hilb_mod}
We follow the conventions in \cite{La95, RaWi98}, except that we prefer to work with \emph{left} Hilbert $C^*$-modules. Thus, by an \emph{inner product $A$-module} we mean a complex vector space $\E$ together with a left $A$-module structure and a map $\lhs{\cdot}{\cdot} \colon \E \times \E \to A$ such that the following axioms are satisfied: 
\begin{enumerate} 
    \item[(a1)] $\lhs{a \veco + b \vect}{h} = a\, \lhs{\veco}{h} + b\, \lhs{\vect}{h}$ for all $a, b \in A$ and $\veco,\vect,h \in \E$.
    \item[(a2)] $\lhs{\veco}{\vect}^* = \lhs{\vect}{\veco}$ for all $\veco,\vect \in \E$.
    \item[(a3)] $\lhs{\veco}{\veco} \geq 0$ (as a positive element of $A$) and $\lhs{\veco}{\veco} = 0$ if and only if $\veco = 0$.
\end{enumerate}
An inner product $A$-module becomes a normed space with respect to
\[ \| \veco \|_\E = \| \lhs{\veco}{\veco} \|^{1/2} \;\;\; \text{for $\veco \in \E$.} \]
If $\E$ is complete with respect to this norm, $\E$ is called a \emph{Hilbert $A$-module}. We will often consider $A$ itself as a Hilbert $A$-module with respect to the inner product $\lhs{a}{b} = ab^*, a,b \in A$.

If $A_0$ is a dense $*$-subalgebra of $A$, then a \emph{pre-inner product $A_0$-module} is a complex vector space $\E_0$ together with a left $A_0$-module structure and a map $\lhs{\cdot}{\cdot} \colon \E_0 \times \E_0 \to A_0$ such that the above three axioms are satisfied for $a \in A_0$ and $\veco,\vect \in \E_0$, where the positivity is interpreted in the completion $A$ of $A_0$. A pre-inner product $A_0$-module $\E_0$ can always be completed into a Hilbert $A$-module $\E$, see \cite[Lemma 2.16]{RaWi98}.

Given a closed $A$-submodule $\E_0$ of a Hilbert $A$-module $\E$, the \emph{orthogonal complement} of $\E$ is the set $\E_0^{\perp} = \{ \veco \in \E : \text{$\lhs{\veco}{\vect} = 0$ for all $\vect \in \E_0$} \}$. One always has $\E_0 \cap \E_0^{\perp} = \{ 0 \}$ but not necessarily $\E_0 + \E_0^{\perp} = \E$. If the latter is the case, $\E_0$ is called \emph{orthogonally complementable} in $\E$.

One may form the direct sum of finitely many Hilbert $A$-modules in the obvious way. The direct sum of $n$ copies of a Hilbert $A$-module $\E$ is denoted by $\E^n$.

\subsection{Adjointable operators}

A map $T \colon \E \to \F$ between Hilbert $A$-modules is called \emph{adjointable} if there exists a (uniquely determined) map $T^* \colon \F \to \E$ such that
\[ \lhs{T\veco}{\vect} = \lhs{\veco}{T^*\vect} \;\;\; \text{for all $\veco \in \E$ and $\vect \in \F$.} \]
An adjointable map is automatically a bounded, $A$-linear operator. 
We say that $\E$ and $\F$ are isomorphic (as Hilbert $A$-modules) if there exists an adjointable map $T:\E\to \F$ which is a unitary, i.e., satisfies that $T^*T = I_\E$ and $TT^* = I_\F$.
We denote by $\mathcal{L}_A(\E,\F)$ the set of all adjointable maps from $\E$ into $\F$, which is a Banach space with respect to the operator norm. Furthermore, we set $\mathcal{L}_A(\E) \coloneqq \mathcal{L}_A(\E,\E)$. The map $T \mapsto T^*$ is an involution on $\mathcal{L}_A(\E)$, and $\mathcal{L}_A(\E)$ becomes a $C^*$-algebra with respect to the operator norm.

If a map $T \colon \E \to \F$ is $A$-linear and isometric (as a map between the underlying Banach spaces), it need not be adjointable. However, the following conditions are equivalent:
\begin{itemize}
    \item[1)] $T$ is $A$-linear, isometric, and $\Img(T)$ is orthogonally complementable in $\E$;
    \item[2)] $T$ is adjointable with $T^*T = I_\E$;
    \item[3)] $T$ is an adjointable isometry.
\end{itemize} 
The equivalence between 1) and 2) is \cite[Proposition 3.6]{La95}. The fact that 2) implies 3) is straightforward. Finally, if 3) holds, then $\Img(T)$ is closed, and orthogonally complementable in $\F$ by \cite[Theorem 3.2]{La95}, so 1) holds.
An immediate consequence is that there exists an adjointable isometry $\E \to \F$ if and only if $\E$ is isomorphic to a closed, orthogonally complementable $A$-submodule of $\F$.

Given $\vect, h \in \E$, the \emph{rank-one operator} $\Theta_{\vect,h} \in \mathcal{L}_A(\E)$ is given by
$ \Theta_{\vect,h} \veco = \lhs{\veco}{\vect} h$ for $\veco \in \E$.

\subsection{Spanning and independence}\label{subsec:spanning_independence}

Let $\E$ be a Hilbert $A$-module. The \emph{$A$-span} of a set $S \subseteq \E$ is the set $\spn_A S$ of all finite $A$-linear combinations $\sum_{j=1}^n a_j \vect_j$ where $\vect_j \in S$, $a_j \in A$ for $1 \leq j \leq n$. A finite set $S \subseteq \E$ is called a \emph{generating set} for $\E$ if $\spn_A S = \E$, and $\E$ is called \emph{finitely generated} if it admits a finite generating set. Note that this notion is often called \emph{algebraically finitely generated} in Hilbert $C^*$-module theory to distinguish it from the weaker notion of being topologically finitely generated.

Associated to a finite set $\{ \vect_1, \ldots, \vect_n \} \subseteq \E$ are the \emph{analysis operator} $\analysis \colon \E \to A^n$ and the \emph{synthesis operator} $\synthesis \colon A^n \to \E$ given by
\begin{align*}
    \analysis \veco &= (\lhs{\veco}{\vect_j})_{j=1}^n , \\
    \synthesis (a_j)_{j=1}^n &= \sum_{j=1}^n a_j \vect_j
\end{align*}
for $\veco \in \E$ and $(a_j)_j \in A^n$. 
Both these operators are
adjointable, with $\analysis^* = \synthesis$. The operator $\frameop \colon \analysis^*\analysis \colon \E \to \E$ is called the \emph{frame operator} and the operator $\gramian = \synthesis^*\synthesis \colon A^n \to A^n$ is called the \emph{Gramian operator}. 

The following characterization of generating sets will be convenient for our purposes, cf.~\cite[Theorem 5.9]{FrLa02}.

\begin{lemma}[\cite{FrLa02}] \label{lem:frame_gen}
A finite set $\{ \vect_1, \ldots, \vect_n \} \subseteq \E$ is a generating set for $\E$ if and only if it is a \emph{frame} for $\E$, that is, there exist $\lfb, \ufb > 0$ such that
\begin{align}\label{eq:module_frame}
      \lfb \lhs{\veco}{\veco} \leq \sum_{j=1}^n \lhs{\veco}{\vect_j} \lhs{\veco}{\vect_j}^* \leq C_2 \lhs{\veco}{\veco} \;\;\; \text{for all $\veco \in \E$,} 
      \end{align}
      if and only if the associated frame operator $\frameop$ is invertible in $\mathcal{L}_A(\E)$.
\end{lemma}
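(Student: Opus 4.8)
The plan is to prove the three-way equivalence in Lemma~\ref{lem:frame_gen} by establishing a cycle of implications between (a) being a generating set, (b) satisfying the module frame inequalities~\eqref{eq:module_frame}, and (c) having invertible frame operator $\frameop$. The cleanest route is to show that (b) and (c) are equivalent by a functional-calculus argument on the $C^*$-algebra $\mathcal{L}_A(\E)$, and then close the loop by showing (c) $\Rightarrow$ (a) $\Rightarrow$ (b), since the surjectivity content of being a generating set sits most naturally between the operator-theoretic statement (c) and the inequality (b).

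I would begin with the equivalence of (b) and (c). Observe that the frame operator can be written in terms of the rank-one operators introduced earlier: expanding $\frameop = \analysis^*\analysis = \synthesis\analysis$ gives $\frameop \veco = \sum_{j=1}^n \lhs{\veco}{\vect_j}\vect_j = \sum_{j=1}^n \Theta_{\vect_j,\vect_j}\veco$, so $\frameop = \sum_{j=1}^n \Theta_{\vect_j,\vect_j}$ is a positive element of $\mathcal{L}_A(\E)$. The middle expression $\sum_{j=1}^n \lhs{\veco}{\vect_j}\lhs{\veco}{\vect_j}^*$ in~\eqref{eq:module_frame} equals $\lhs{\frameop\veco}{\veco}$, which one checks using axioms (a1)--(a2) together with $\analysis^* = \synthesis$; concretely $\lhs{\frameop \veco}{\veco} = \lhs{\analysis^*\analysis\,\veco}{\veco} = \lhs{\analysis\veco}{\analysis\veco}_{A^n} = \sum_j \lhs{\veco}{\vect_j}\lhs{\veco}{\vect_j}^*$. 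Therefore~\eqref{eq:module_frame} reads $\lfb\,\lhs{\veco}{\veco}\le \lhs{\frameop\veco}{\veco}\le\ufb\,\lhs{\veco}{\veco}$ for all $\veco$, which is precisely the statement that $\lfb\,I_\E \le \frameop \le \ufb\,I_\E$ as operators in the $C^*$-algebra $\mathcal{L}_A(\E)$. A positive element of a $C^*$-algebra is invertible if and only if it is bounded below by a positive multiple of the unit, so this sandwiching is equivalent to the invertibility of $\frameop$, giving (b) $\Leftrightarrow$ (c).

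For the remaining loop, (c) $\Rightarrow$ (a) is the standard reconstruction argument: if $\frameop$ is invertible in $\mathcal{L}_A(\E)$, then for every $\veco \in \E$ we have $\veco = \frameop^{-1}\frameop\,\veco = \sum_{j=1}^n \frameop^{-1}(\lhs{\veco}{\vect_j}\vect_j)$, and since $\frameop^{-1}$ is $A$-linear this exhibits $\veco$ as a finite $A$-linear combination $\sum_j \lhs{\veco}{\vect_j}(\frameop^{-1}\vect_j)$ of the $\vect_j$, so $\spn_A\{\vect_1,\dots,\vect_n\} = \E$. The implication (a) $\Rightarrow$ (b) is where I expect the only real subtlety. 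The upper bound in~\eqref{eq:module_frame} is automatic because $\frameop$ is a bounded adjointable operator, so $\frameop \le \|\frameop\|\,I_\E$; the issue is the lower bound. From (a), the synthesis operator $\synthesis\colon A^n \to \E$ is surjective. The natural move is to invoke that a surjective adjointable (hence bounded) module map between Hilbert $A$-modules is open, so that $\synthesis$ admits a bounded $A$-linear right inverse and $\synthesis\synthesis^* = \frameop$ is bounded below; this is essentially the open mapping theorem for Hilbert $C^*$-modules together with the polar-decomposition/complementability results of \cite{La95} already cited in the excerpt.

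The \textbf{main obstacle} is precisely this last step: passing from surjectivity of $\synthesis$ to a quantitative lower bound on $\frameop$, i.e.\ to invertibility, without the luxury of Hilbert-space orthogonal projections. In the Hilbert-space setting one simply restricts to $(\Ker\synthesis)^\perp$, but kernels of module maps need not be orthogonally complementable. The clean way around this is to use that $\frameop = \synthesis\analysis = \synthesis\synthesis^*$ has the same closed range as $\synthesis$, which is all of $\E$; a positive adjointable operator with dense (here, full) range on a Hilbert $C^*$-module is invertible by the results on complementability of closed ranges in \cite[Theorem 3.2]{La95}, and I would lean on exactly that structural theorem rather than attempt an elementary estimate. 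This is the reason the lemma is attributed to \cite{FrLa02} and why the argument genuinely requires the adjointable-operator machinery assembled in the preceding subsections.
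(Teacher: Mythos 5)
Your proposal is correct, and much of it coincides with the paper's argument: the implication from invertibility of $\frameop$ to the generating property is verbatim the paper's reconstruction $\veco = \frameop^{-1}\frameop\,\veco = \sum_{j}\lhs{\veco}{\vect_j}\,\frameop^{-1}\vect_j$, and your translation of \eqref{eq:module_frame} into the operator sandwich $\lfb\, I_\E \leq \frameop \leq \ufb\, I_\E$ is exactly the paper's step — note, though, that this translation is not bare functional calculus but rests on the fact that positivity in $\mathcal{L}_A(\E)$ is detected pointwise by $\lhs{T\veco}{\veco} \geq 0$, which is precisely the reference \cite[Lemma 4.1]{La95} the paper invokes there. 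The genuine divergence is the implication out of the generating property: the paper simply cites \cite[Theorem 5.9]{FrLa02} for \textquotedblleft generating set implies frame\textquotedblright\ and then passes to invertibility, whereas you prove generating $\Rightarrow$ invertibility of $\frameop$ directly, observing that a generating set means $\synthesis$ is surjective and that surjectivity of an adjointable operator forces $\synthesis\synthesis^* = \frameop$ to be invertible. Your route is sound: by \cite[Theorem 3.2]{La95} the closed (here, full) range of $\synthesis$ gives the domain decomposition $A^n = \Ker \synthesis \oplus \Img(\synthesis^*)$, whence $\Img(\frameop) = \Img(\synthesis) = \E$, and a positive surjective element of $\mathcal{L}_A(\E)$ is invertible. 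In fact this is the same mechanism the paper uses for the Riesz-sequence counterpart (\Cref{prop:gramian_invertible}) via \cite[Proposition 2.1]{Ar07}; applying that proposition to $\synthesis$ would give your implication in one stroke. What your version buys is a self-contained proof of the one implication the paper outsources to a citation; what the paper's version buys is brevity. Two small cautions on your phrasing: \textquotedblleft positive with dense range implies invertible\textquotedblright\ is false (an injective positive compact operator on a Hilbert space has dense range), so the parenthetical \textquotedblleft (here, full)\textquotedblright\ is doing real work and full range should be stated as the hypothesis; and openness of a surjective $\synthesis$ yields only a norm-controlled, generally nonlinear, choice of preimages — the bounded $A$-linear right inverse $\synthesis^*\frameop^{-1}$ is available only \emph{after} invertibility is established, so that clause should be reworded or dropped, as your final argument does not actually need it.
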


\begin{proof} Consider the set $\{ \vect_1, \ldots, \vect_n \} \subseteq \E$. If it is  generating for $\E$, then it is a frame for $\E$ by \cite[Theorem 5.9]{FrLa02}. 
If it is a frame for $\E$, satisfying (\ref{eq:module_frame}), then, using \cite[Lemma 4.1]{La95}, we get that the positive operator $\frameop = \sum_{j=1}^n \Theta_{\vect_j, \vect_j}$ satisfies that $C_1\, I_\E \leq \frameop \leq C_2\, I_\E$; since $C_1 >0$, it follows that $\frameop$ is invertible in $\mathcal{L}_A(\E)$.
Finally, if  $\frameop$ is invertible in $\mathcal{L}_A(\E)$, and $\veco \in \E$, then $ \veco = \frameop \frameop^{-1} \veco = 
\sum_{j=1}^n  \lhs{\frameop^{-1}\veco}{\vect_j}\, \vect_j,$
which shows that $\{ \vect_1, \ldots, \vect_n \}$ is generating for $\E$.
\end{proof}

As for Hilbert spaces, if one can choose $C_1 = C_2 = 1$ in \eqref{eq:module_frame}, the frame is called \emph{Parseval}.

We call a finite set $\{ \vect_1, \ldots, \vect_n \}$ \emph{$A$-linearly independent} if whenever $a_1, \ldots, a_n \in A$ are such that $\sum_{j=1}^n a_j \vect_j = 0$, then $a_j = 0$ for $1 \leq j \leq n$. Note that contrary to the Hilbert space case, the $A$-span of a finite set might not be topologically closed. We say that a finite set $S$ has closed $A$-span if $\spn_A S$ is topologically closed.

\begin{lemma}[\cite{Ar07}]\label{prop:gramian_invertible}
A finite set $\{ \vect_1, \ldots, \vect_n \} \subseteq \E$ is $A$-linearly independent with closed $A$-span if and only if the associated Gramian operator $\gramian$ is invertible in $\mathcal{L}_A(\E)$.
\end{lemma}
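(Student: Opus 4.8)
The plan is to reformulate both conditions in terms of the synthesis operator $\synthesis\colon A^n\to\E$, $\synthesis\big((a_j)_{j=1}^n\big)=\sum_{j=1}^n a_j\vect_j$, whose adjoint is the analysis operator $\analysis=\synthesis^*$ and whose associated Gramian is $\gramian=\synthesis^*\synthesis\in\mathcal{L}_A(A^n)$. Indeed, $\{\vect_1,\dots,\vect_n\}$ is $A$-linearly independent exactly when $\synthesis$ is injective, and it has closed $A$-span exactly when $\Img(\synthesis)=\spn_A\{\vect_1,\dots,\vect_n\}$ is closed. Thus the lemma is equivalent to the assertion that $\synthesis$ is injective with closed range if and only if $\gramian$ is invertible in $\mathcal{L}_A(A^n)$.

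For the implication $(\Leftarrow)$, assume $\gramian$ is invertible. Injectivity of $\synthesis$ is immediate, since $\synthesis x=0$ forces $\gramian x=\synthesis^*\synthesis x=0$ and hence $x=0$. For closedness of the range I would exploit that $\gramian$ is a positive invertible element of $\mathcal{L}_A(A^n)$, so $\gramian\geq\varepsilon\,I$ with $\varepsilon=\|\gramian^{-1}\|^{-1}>0$; together with the identity $\lhs{\synthesis x}{\synthesis x}=\lhs{x}{\gramian x}$ this gives $\lhs{\synthesis x}{\synthesis x}\geq\varepsilon\,\lhs{x}{x}$, whence $\|\synthesis x\|^2\geq\varepsilon\|x\|^2$. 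So $\synthesis$ is bounded below and therefore has closed range.

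The implication $(\Rightarrow)$ carries the main difficulty. The obstacle is structural: over a general $C^*$-algebra, $\synthesis$ being injective with closed range only yields a lower \emph{norm} bound, not an order bound $\gramian\geq\varepsilon I$, and a closed submodule of $\E$ need not be orthogonally complementable, so one cannot simply corestrict. The key tool is \cite[Theorem 3.2]{La95}: since $\synthesis$ is adjointable with closed range, $\Img(\synthesis)$ \emph{is} orthogonally complementable in $\E$. Writing $P\colon\E\to\Img(\synthesis)$ for the orthogonal projection (with adjoint the inclusion $\iota=P^*$) and setting $V\coloneqq P\synthesis\colon A^n\to\Img(\synthesis)$, I would check that $V$ is adjointable and bijective---injective because $\synthesis$ is, and surjective because $\Img(\synthesis)$ is precisely its codomain. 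A bijective adjointable operator between Hilbert $A$-modules is invertible with adjointable inverse (again via \cite[Theorem 3.2]{La95}, which forces $V^*$ to be bijective as well), so both $V$ and $V^*$ are invertible. Finally, using $\iota P\synthesis=\synthesis$ one computes $V^*V=\synthesis^*\iota P\synthesis=\synthesis^*\synthesis=\gramian$, exhibiting $\gramian$ as a product of invertibles and hence invertible in $\mathcal{L}_A(A^n)$. This completes the equivalence.
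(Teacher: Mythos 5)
Your proof is correct, but it takes a genuinely different route from the paper's. The paper does not argue directly: it invokes \cite[Proposition 2.1]{Ar07}, applied to the analysis operator $\analysis$ (with $\synthesis=\analysis^*$), and simply records the chain of implications proved there --- $\analysis$ surjective $\Rightarrow$ $\synthesis^*\synthesis$ invertible $\Rightarrow$ inner-product lower bound $\Rightarrow$ norm lower bound $\Rightarrow$ $\synthesis$ injective with closed range $\Rightarrow$ $\analysis$ surjective --- from which invertibility of $\gramian$ is read off as equivalent to $\synthesis$ being injective with closed range. You instead give a self-contained two-directional argument: for $(\Leftarrow)$ you upgrade invertibility of the positive operator $\gramian$ to the order bound $\gramian \geq \|\gramian^{-1}\|^{-1}\,I$, hence to the module inequality $\lhs{\synthesis x}{\synthesis x} \geq \varepsilon\, \lhs{x}{x}$ (which is precisely condition 3) in the paper's quoted chain), and then to a norm lower bound; for $(\Rightarrow)$ you correctly identify the real obstruction --- closed submodules need not be orthogonally complementable, and a norm lower bound does not directly give an order bound --- resolve it with Lance's closed range theorem \cite[Theorem 3.2]{La95}, and factor $\gramian = V^*V$ with $V$ the corestriction of $\synthesis$ to its range, a bijective adjointable operator whose inverse is adjointable (bijectivity of $V^*$ again coming from the closed range theorem). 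The mathematical content largely overlaps, since the cited argument in \cite{Ar07} also rests on the closed range theorem; what your route buys is self-containedness and an explicit factorization witnessing invertibility, while the paper's citation-based route is shorter and records intermediate equivalent conditions (surjectivity of $\analysis$ and the two lower bounds) as a byproduct. One small point in your favor: you correctly place $\gramian$ in $\mathcal{L}_A(A^n)$, whereas the lemma as stated writes $\mathcal{L}_A(\E)$, an evident typo given the definition $\gramian = \synthesis^*\synthesis \colon A^n \to A^n$.
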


\begin{proof}
Consider a finite set $\{ \vect_1, \ldots, \vect_n \} \subseteq \E$. Applying \cite[Proposition 2.1]{Ar07} to the associated analysis operator $\analysis:\E\to A^n$, and setting $\synthesis= \analysis^*$, we get that the following conditions are equivalent:
\begin{itemize}
    \item[1)] $\analysis$ is surjective;
    
    \smallskip
    \item[2)] There exists $C>0$ such that $C\, \| (a_j)_j \|_{A^n} \leq \| \synthesis(a_j)_j \|_\E$ for all  $(a_j)_j \in A^n$;
    
    \smallskip 
    \item[3)] There exists $C'>0$ such that $C'\lhs{(a_j)_j}{(a_j)_j} \leq \lhs{\synthesis (a_j)_j}{\synthesis (a_j)_j}$ for all $(a_j)_j \in A^n$.
\end{itemize}
In fact, to show these equivalences, it is shown in \cite{Ar07} that 1) $\Rightarrow \,  \synthesis^*\synthesis \text{ is invertible } \Rightarrow$ 3) $\Rightarrow$ 2) $\Rightarrow \, \synthesis \text{ is injective with closed range } \Rightarrow$ 1).  This means that the associated Gramian operator $\gramian=\synthesis^*\synthesis$ is invertible in $\mathcal{L}_A(\E)$ if and only if 
$\synthesis$ is injective with closed range, which is equivalent to $\{ \vect_1, \ldots, \vect_n \}$ being $A$-linearly independent with closed $A$-span.
\end{proof}

In \cite{AuJaLu20}, sets satisfying the properties of \Cref{prop:gramian_invertible} are called \emph{module Riesz sequences}. 

 We recall that if $M_n(A)$ denotes the $C^*$-algebra consisting of all $n\times n$ matrices over $A$ and $p \in M_n(A)$ is a projection (i.e., $p$ is self-adjoint and idempotent), then $A^np$ is a Hilbert $A$-submodule  of $A^n$  (we consider here elements of $A^n$ as row vectors).

\begin{proposition}\label{prop:generating_isometry}
Let $n \in \N$. Then the following hold:
\begin{enumerate}
    \item[(i)] There exists a generating set with $n$ elements in $\E$ if and only if there exists an adjointable isometry $\E \to A^n$, if and only if there exists a projection $p$ in $M_n(A)$ such that $\E\cong A^n p$.
    \item[(ii)] There exists an $A$-linearly independent set with $n$ elements and closed $A$-span in $\E$ if and only if there exists an adjointable isometry $A^n \to \E$.
\end{enumerate}
Furthermore, if there exists a generating set (resp.\ $A$-linearly independent set with closed $A$-span) with $n$ elements, then one can find a generating set (resp.\ $A$-linearly independent set with closed $A$-span) with $n$ elements that belong to any dense subspace $\E_0$ of $\E$.
\end{proposition}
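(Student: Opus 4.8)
The plan is to reduce both biconditionals to invertibility of the frame operator $\frameop$ (resp.\ the Gramian $\gramian$) via \Cref{lem:frame_gen} and \Cref{prop:gramian_invertible}, and then to produce the required adjointable isometries by a functional-calculus normalization, before reading off the projection description from the orthogonal-complementation characterization recorded earlier in this section.

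For part (i): first I would invoke \Cref{lem:frame_gen}, by which $\{\vect_1,\dots,\vect_n\}$ is a generating set precisely when its frame operator $\frameop=\analysis^*\analysis$ is invertible in $\mathcal{L}_A(\E)$, where $\analysis\colon\E\to A^n$ is the associated analysis operator. Given invertible (and positive) $\frameop$, the element $\frameop^{-1/2}$ exists by continuous functional calculus in the $C^*$-algebra $\mathcal{L}_A(\E)$, and $V:=\analysis\frameop^{-1/2}$ satisfies $V^*V=\frameop^{-1/2}\frameop\frameop^{-1/2}=I_\E$, so $V$ is an adjointable isometry $\E\to A^n$. Conversely, given an adjointable isometry $V\colon\E\to A^n$, I would set $\vect_j:=V^*e_j$, where $e_1,\dots,e_n$ are the standard free generators of $A^n$ (with $1_A$ in the $j$-th slot); using adjointability and the identity $\lhs{v}{e_j}=v_j$, a direct computation gives $\analysis\veco=(\lhs{V\veco}{e_j})_j=V\veco$, whence $\frameop=V^*V=I_\E$ is invertible and $\{\vect_j\}$ is generating. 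The equivalence with the existence of a projection $p\in M_n(A)$ satisfying $\E\cong A^np$ then follows from the fact, already established in this section, that an adjointable isometry $\E\to A^n$ exists iff $\E$ is isomorphic to a closed, orthogonally complementable submodule of $A^n$, combined with the identification $\mathcal{L}_A(A^n)\cong M_n(A)$ acting by right multiplication on row vectors: the isometry $V$ produces the projection $p=VV^*$ with $\Img V=A^np$, and conversely $A^np$ is orthogonally complementable in $A^n$ with complement $A^n(1_A-p)$.

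For part (ii): by \Cref{prop:gramian_invertible} the set is $A$-linearly independent with closed $A$-span exactly when the Gramian $\gramian=\synthesis^*\synthesis$ is invertible, where $\synthesis\colon A^n\to\E$ is the synthesis operator. Mirroring part (i), I would set $W:=\synthesis\gramian^{-1/2}$, an adjointable isometry $A^n\to\E$ since $W^*W=I_{A^n}$; conversely an adjointable isometry $W\colon A^n\to\E$ recovers such a set via $\vect_j:=We_j$, for which $A$-linearity yields $\synthesis=W$ and hence $\gramian=W^*W=I_{A^n}$. For the final density assertion, I would note that $\frameop=\sum_{j=1}^n\Theta_{\vect_j,\vect_j}$ and the matrix representing $\gramian$ (with entries $\lhs{\vect_j}{\vect_i}$) depend norm-continuously on the vectors $\vect_1,\dots,\vect_n$, and that invertibility is an open condition in $\mathcal{L}_A(\E)$ and in $M_n(A)$; replacing each $\vect_j$ by a sufficiently close $\vect_j'\in\E_0$, which is possible by density of $\E_0$, therefore preserves invertibility of $\frameop$ (resp.\ $\gramian$), so the perturbed set is again generating (resp.\ $A$-linearly independent with closed $A$-span).

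The main obstacle I anticipate is not analytic but bookkeeping: pinning down the left-module and row-vector conventions so that $\mathcal{L}_A(A^n)\cong M_n(A)$ is the \emph{correct} (adjoint-preserving) identification and that $\Img V=A^np$ holds on the nose. Once the conventions are fixed, the square-root normalizations and the openness-of-invertibility perturbation are routine.
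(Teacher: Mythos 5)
Your proposal is correct and takes essentially the same approach as the paper: both reduce each part to invertibility of the frame operator (via \Cref{lem:frame_gen}) or of the Gramian (via \Cref{prop:gramian_invertible}), produce the adjointable isometries by functional-calculus square-root normalization, identify the projection as $p = VV^*$ under the row-vector identification $\mathcal{L}_A(A^n)\cong M_n(A)$ (the paper writes its rows as $\analysis\analysis^* e_i$), and obtain the density statement by perturbing into $\E_0$ using norm-continuity of $\frameop$ and $\gramian$ together with openness of invertibility (the paper makes this quantitative via the estimate $\|\Theta_{\vect,\vect}-\Theta_{\vect',\vect'}\|\leq(\|\vect\|+\|\vect'\|)\|\vect-\vect'\|$). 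Your minor variants---normalizing $\synthesis$ by $\gramian^{-1/2}$ in (ii) instead of constructing the orthonormal vectors $\frameop^{-1/2}\vect_j$ inside the closed span, and proving the isometry-implies-generating direction of (i) directly via $\vect_j = V^*e_j$ and $\analysis = V$ rather than passing through the projection---are sound and change nothing essential.
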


\begin{proof}
(i) A generating set with $n$ elements is a frame by \Cref{lem:frame_gen}. Hence, the corresponding frame operator $\frameop$ is invertible. As in the Hilbert space case, by applying $\frameop^{-1/2}$ to each element of the frame, one obtains a new frame with $n$ elements which is Parseval. But then the associated analysis operator $\analysis \colon \E \to A^n$ is an adjointable isometry.
  
 Next, assume that there exists an adjointable isometry $\analysis: \E \to A^n$ for some $n\in \N$. Then one checks readily that $\Img(\analysis) = A^n p$, where  $p$ is the projection  in $M_n(A)$ whose $i$-th row vector is $\analysis\analysis^*e_i$, where $e_i= (\delta_{i,j}1_A)_{j=1}^n \in A^n$. Thus, $\E\cong A^n p$. 
 
 Finally, if there exists a projection $p$ in $M_n(A)$ such that $\E\cong A^n p$, then, as $A^n p$ has clearly a generating set with $n$ elements, this is also true for $\E$. 

(ii) Assume first that $\vect_1, \ldots, \vect_n$ are $A$-linearly independent and that $\mathcal{F} = \spn_A \{ \vect_1, \ldots, \vect_n \}$ is closed in $\E$. Then $\mathcal{F}$ is a Hilbert $A$-module, and $\{ \vect_1, \ldots, \vect_n \}$ is a generating set for $\F$. Denoting by $\frameop \in \mathcal{L}_A(\F)$ the corresponding frame operator, $\frameop$ is positive and invertible, and $(\vect_j)_{j=1}^n$ is a frame.
Since $\frameop \veco = \sum_{j=1}^n \lhs{ \veco}{\vect_j} \vect_j$ for every $\veco \in \F$, we get that
$ \vect_i = \sum_{j=1}^n \lhs{\frameop^{-1} \vect_i}{\vect_j} \vect_j  $
for every $1\leq i \leq n$.
By $A$-linear independence, this forces $\lhs{\frameop^{-1}\vect_i}{\vect_j} = \delta_{i,j}\,1_A$. Hence, the set $\{ \tilde{\vect}_j : 1 \leq j \leq n \}$, where $\tilde{\vect}_j :=\frameop^{-1/2} \vect_j$, is orthonormal in the sense that
\[ \lhs{\tilde{\vect}_i}{\tilde{\vect}_j} = \lhs{\frameop^{-1}\vect_i}{\vect_j} = \delta_{i,j}\,1_A \]
for all $1 \leq i,j \leq n$. It follows that the associated synthesis operator $\synthesis \colon A^n \to \E$ given by $\synthesis(a_j)_j = \sum_{j=1}^n a_j \tilde{\vect}_j$ is an adjointable isometry.

Conversely, suppose $\synthesis \colon A^n \to \E$ is an adjointable isometry. Set $\vect_j = \synthesis(e_j)$ for each $1 \leq j \leq n$, where $e_j$ denotes the $j$th element of the standard basis of $A^n$. Then $\{ \vect_1, \ldots, \vect_j \}$ is an orthonormal set, hence $A$-linearly independent. This finishes the proof of (ii).

Suppose now that 
$\{ \vect_1, \ldots, \vect_n \}$ is a generating set 
for $\E$, which by the argument for (i) above can be assumed to be a Parseval frame. Let $\frameop$ be the corresponding frame operator. In terms of rank-one operators, we get
$ \frameop = \sum_{j=1}^n \Theta_{\vect_j,\vect_j} = I_\E .$
Now it is an easy exercise to check that 
\[ \|\Theta_{\vect, \vect} -\Theta_{\vect', \vect'}\| \leq (\|\vect\| + \|\vect'\|) \|\vect-\vect'\| \]
for all $\vect, \vect' \in \E$.
 By density of $\E_0$ in $\E$, we can find $\vect_j' \in \E_0$ such that $\|\vect_j-\vect_j'\| < \delta$ for every $j=1, \ldots, n$, where $M:= \max\{\|\vect_1\|, \ldots , \|\vect_n\|\}$ and $\delta:= \min\{1, ((2M+1)n)^{-1}\}$. This gives that
 \[\|I_\E - \sum_{j=1}^n \Theta_{\vect_j',\vect_j'}\| \leq \sum_{j=1}^n \|\Theta_{\vect_j,\vect_j}-\Theta_{\vect_j',\vect_j'}\| \leq (2M+1) \sum_{j=1}^n \|\vect_j-\vect_j'\| < (2M+1)n \delta \leq 1,\]
 hence that $\frameop' := \sum_{j=1}^n \Theta_{\vect_j',\vect_j'}$ is invertible in $\mathcal{L}_A(\E)$.
 Since $\frameop'$ is the frame operator associated to the family $\{\vect_1', \ldots, \vect_n'\}$, it follows from Lemma \ref{lem:frame_gen} that this family, which lies in $\E_0$, 
 is a generating set for $\E$.
By considering the Gramian operator instead of the frame operator, a similar argument shows the analogous property for $A$-linearly independent sets with closed $A$-span.
\end{proof}

The first part of \Cref{prop:generating_isometry} is essentially known, see, e.g., \cite[Section 7]{rieffel2010vector} and \cite[Section 5]{FrLa02} for somewhat similar statements.

\subsection{Localization of Hilbert C*-modules}\label{subsec:loc}
We will repeatedly use the following simple observation, which relies on the elementary fact that $cac^* \leq cbc^*$ for every $c\in A$ whenever $a, b \in A$ are self-adjoint and $a\leq b$.

 \begin{lemma}\label{observation:trace_positive} 
Assume $\tau$ is a tracial state on $A$ 
and let $a,b \in A$ be positive. Then 
 \[ 0 \leq \tau(ab) \leq \| a \| \tau(b) .\]
\end{lemma}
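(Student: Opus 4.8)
The plan is to prove the two inequalities separately, in each case exploiting the positivity and traciality of $\tau$ together with the elementary conjugation inequality quoted just before the statement (namely that $cac^* \leq cbc^*$ whenever $a \leq b$ are self-adjoint).

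First I would establish the lower bound $0 \leq \tau(ab)$. Since $a$ is positive, it admits a positive (hence self-adjoint) square root $a^{1/2}$, and by traciality $\tau(ab) = \tau(a^{1/2} a^{1/2} b) = \tau(a^{1/2} b a^{1/2})$. The element $a^{1/2} b a^{1/2} = c b c^*$ with $c = a^{1/2}$ is positive because $b \geq 0$, so its value under the positive functional $\tau$ is nonnegative, giving $\tau(ab) \geq 0$.

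For the upper bound, I would start from the standard fact that a positive element satisfies $a \leq \| a \| 1_A$, since its spectrum is contained in $[0, \| a \|]$. Applying the quoted conjugation fact to the self-adjoint elements $a \leq \| a \| 1_A$ with $c = b^{1/2}$ yields $b^{1/2} a b^{1/2} \leq \| a \|\, b^{1/2} b^{1/2} = \| a \| b$. Applying $\tau$ and using that states are order-preserving (if $x \leq y$ then $\tau(y - x) \geq 0$), I obtain $\tau(b^{1/2} a b^{1/2}) \leq \| a \| \tau(b)$. Traciality once more gives $\tau(b^{1/2} a b^{1/2}) = \tau(a b^{1/2} b^{1/2}) = \tau(ab)$, which combined with the previous step establishes $0 \leq \tau(ab) \leq \| a \| \tau(b)$.

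I expect no genuine obstacle here; the only points requiring care are the repeated cyclic permutations of factors justified by traciality and the routine verification that a state is monotone with respect to the order on self-adjoint elements.
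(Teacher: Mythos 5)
Your proposal is correct and follows essentially the same route as the paper: conjugate the inequality $0 \leq a \leq \|a\|\,1_A$ by $b^{1/2}$, apply positivity of $\tau$, and use traciality to identify $\tau(b^{1/2} a b^{1/2})$ with $\tau(ab)$. The only (immaterial) difference is that you derive the lower bound by conjugating $b$ with $a^{1/2}$, whereas the paper gets both bounds simultaneously from the single sandwiched inequality $0 \leq b^{1/2} a b^{1/2} \leq \|a\|\,b$.
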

\begin{proof}
Indeed, since $a$ is positive, we have that $0 \leq a \leq \| a \| 1_A$. 
 Hence we get
 \[ 0 \leq   b^{1/2} a b^{1/2} \leq b^{1/2} \| a \| 1_A b^{1/2} = \| a \| b, \]
 which implies that
 \[ 0  \leq \tau(b^{1/2} a b^{1/2}) \leq \tau(\| a \| b) = \| a \| \tau(b) .\]
 As $\tau$ is tracial, 
$ \tau(b^{1/2}a b^{1/2}) = \tau(ab)$, and
the result follows.
\end{proof}
 
We assume from now on that $A$ has a faithful, tracial state $\tau$.
 We denote by $\Hi$ the Hilbert space obtained from the GNS construction applied to $(A,\tau)$. 
Thus $\Hi$ is the Hilbert space completion of $A$ with respect to the inner product given by $\langle a, b\rangle_\tau = \tau(a b^*)$ for $a, b \in A$. To avoid confusion, we write $\widehat a$ when we view $a\in A$ as an element of $\Hi$. 
Since $\tau$ is faithful, we can view $A$ as a $C^*$-subalgebra of $\B(\Hi)$, 
whose action on $\Hi$ is determined by $a\widehat{b} = \widehat{ab}$ for $a, b \in A$. The vector $\veco_0:=\widehat{1_A} \in \Hi$ is then cyclic and separating for $A$, and we have
$\tau(a) = \langle a \veco_0, \veco_0\rangle_\tau$ for every $a \in A$.

Let $M = A'' \subseteq \B(\Hi)$ be the von Neumann algebra on $\Hi$ generated by $A$. By \cite[Proposition V.3.19]{Tak}, 
 the functional on $M$ given by $a \mapsto \langle a \veco_0, \veco_0\rangle_\tau$ is a faithful tracial normal state on $M$,  
which we also denote by $\tau$. 
The GNS-space of $(M,\tau)$, which is usually denoted by $L^2(M, \tau)$, can then be identified with $\Hi$, and $M$ acts also on it from the right in the obvious way.

Throughout this subsection we also fix a Hilbert $A$-module $\E$, where we denote the $A$-valued inner product by $\lhs{\cdot}{\cdot}$. We define a scalar-valued inner product on $\E$ by setting
\[ \langle \veco, \vect \rangle_{\Hi_{\E}^{\tau}} = \tau(\lhs{\veco}{\vect}) \;\;\; \text{for $\veco,\vect \in \E$,} \]
and denote by $\Hi_{\E}^\tau$ the corresponding Hilbert space completion of $\E$. (This is a special case of a procedure known as \emph{localization} of Hilbert $C^*$-modules, see \cite[p.\ 7]{La95}.) Since $\tau$ is a tracial state, the left action of $A$ on $\E$ extends to a representation $\pi_\E^\tau$ of $A$ on $\Hi_{\E}^\tau$.
Indeed, using \Cref{observation:trace_positive}, we get that for all $a\in A$ and $\veco\in \E$,  
\[ \|a\veco\|_{\Hi_{\E}^\tau}^2 = \tau(\lhs{a\veco}{a\veco})
= \tau(a\lhs{\veco}{\veco}a^*)=  \tau(a^*a \,\lhs{\veco}{\veco})\leq \|a^*a\| \tau(\lhs{\veco}{\veco})=\|a\|^2 \|\veco\|_{\Hi_{\E}^\tau}^2.\]
It follows that the linear operator $\veco \mapsto a\veco$ extends to a bounded linear operator $\pi_\E^\tau(a)$ on $\Hi_{\E}^\tau$ for each $a\in A$, and one checks readily that the map $a \mapsto \pi_\E^\tau(a)$ is a $*$-homomorphism from $A$ into $\mathcal{B}(\Hi_{\E}^\tau)$.
We refer to the pair $(\Hi_{\E}^\tau,\pi_\E^\tau)$ as the \emph{localization} of $\E$ with respect to $(A,\tau)$.

We recall the notion of a Hilbert module over a von Neumann algebra, which is different from the notion of a Hilbert $C^*$-module over a $C^*$-algebra. If $N$ denotes a von Neumann algebra, a (left, normal) Hilbert $N$-module is a Hilbert space $\mathcal{K}$ together with a normal unital representation 
of $N$ on $\mathcal{K}$. 

The following proposition seems part of the folklore. 
As we could not find a suitable reference in the literature, we include a proof, for the ease of the reader. 

\begin{proposition}\label{proposition:extend_von_neumann}
The representation $\pi_\E^\tau$ of $A$ on $\Hi_{\E}^\tau$ extends uniquely 
to a normal representation of $M$ on $\Hi_{\E}^\tau$.
In other words, $\Hi_{\E}^\tau$ is a Hilbert $M$-module. 
\end{proposition}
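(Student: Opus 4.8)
The plan is to construct the extension by hand out of a family of $A$-equivariant maps from $\Hi$ into $\Hi_{\E}^\tau$, and then to deduce normality by reducing everything to vector functionals of the normal representation $M \subseteq \B(\Hi)$. For each $\vect \in \E$, I would introduce the linear map $T_\vect \colon \Hi \to \Hi_{\E}^\tau$ determined on the dense subspace $\{\widehat a : a \in A\}$ by $T_\vect \widehat a = \widehat{a\vect}$. The same trace estimate used just before the proposition, now with the roles of the two factors interchanged, gives $\|T_\vect \widehat a\|_{\Hi_{\E}^\tau}^2 = \tau(a^*a\,\lhs{\vect}{\vect}) \le \|\lhs{\vect}{\vect}\|\,\tau(a^*a) = \|\vect\|_{\E}^2\,\|\widehat a\|_\Hi^2$ by \Cref{observation:trace_positive}, so $T_\vect$ extends to a bounded operator with $\|T_\vect\| \le \|\vect\|_{\E}$. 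A short computation with axioms (a1)--(a2) and the trace identity $\langle x,y\rangle_\tau = \tau(xy^*)$ shows $T_\vect^*\,\widehat\veco = \widehat{\lhs{\veco}{\vect}}$ for $\veco \in \E$; consequently $T_{\vect'}^*T_\vect$ is right multiplication by $\lhs{\vect}{\vect'}$ on $\Hi$. Since each $T_\vect$ intertwines the left actions, $T_\vect(a\xi) = \pi_{\E}^\tau(a)T_\vect\xi$, taking adjoints and using $M' = (A'')' = A'$ shows $T_{\vect'}^*T_\vect \in M'$, consistent with it being a right multiplication.

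First I would define the candidate extension. The images $T_\vect\Hi$ contain every $\widehat\vect = T_\vect\veco_0$, so $\widehat\E := \{\widehat\vect : \vect \in \E\}$ is a dense subspace of $\Hi_{\E}^\tau$, on which I set, for $m \in M$,
\[ \pi_{\E}^\tau(m)\Big(\sum_{k=1}^n \widehat{\vect_k}\Big) := \sum_{k=1}^n T_{\vect_k}(m\veco_0). \]
The crucial point is boundedness by $\|m\|$. Writing $\vec\xi = (\veco_0,\dots,\veco_0) \in \Hi^n$ and forming the Gramian $\mathcal{C} = [\,T_{\vect_l}^*T_{\vect_k}\,]_{l,k} = \mathbf{T}^*\mathbf{T} \ge 0$ in $M_n(M')$, where $\mathbf{T}\colon \Hi^n \to \Hi_{\E}^\tau$ sends $(\xi_k)$ to $\sum_k T_{\vect_k}\xi_k$, one has $\|\sum_k T_{\vect_k}(m\veco_0)\|^2 = \langle \mathcal{C}\, m^{(n)}\vec\xi,\, m^{(n)}\vec\xi\rangle$, where $m^{(n)} := \mathrm{diag}(m,\dots,m) \in M_n(M)$. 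As $\mathcal{C} \in M_n(M')$ commutes with $m^{(n)} \in M_n(M)$ and $(m^*m)^{(n)} \le \|m\|^2\,1^{(n)}$, this equals $\langle \mathcal{C}\,(m^*m)^{(n)}\vec\xi,\vec\xi\rangle \le \|m\|^2\langle \mathcal{C}\vec\xi,\vec\xi\rangle = \|m\|^2\,\|\sum_k \widehat{\vect_k}\|_{\Hi_{\E}^\tau}^2$. This simultaneously proves well-definedness (the case $\sum_k\widehat{\vect_k} = 0$) and the bound $\|\pi_{\E}^\tau(m)\| \le \|m\|$, so $\pi_{\E}^\tau(m)$ extends to all of $\Hi_{\E}^\tau$.

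Next I would verify the algebraic properties. Checking on the dense set $\{\widehat a\}$ that $\pi_{\E}^\tau(m)T_\vect\,\widehat a = T_\vect(m\,\widehat a)$ reduces, via $T_\vect^*$ and the fact that the right action of $A$ on $\Hi$ commutes with the left action of $M$, to the identity $T_{a\vect}(m\veco_0) = T_\vect(ma\veco_0)$; extending by continuity yields $\pi_{\E}^\tau(m)T_\vect = T_\vect\, m$ for all $m \in M$ and $\vect \in \E$. From this the homomorphism property and the $*$-relation $\pi_{\E}^\tau(m)^* = \pi_{\E}^\tau(m^*)$ follow on the dense span of the vectors $T_\vect\xi$ using $T_{\vect'}^*T_\vect \in M'$, while $\pi_{\E}^\tau(1_M) = I$ is clear; and the map extends $\pi_{\E}^\tau$ since for $a \in A$ one recovers $T_\vect(a\veco_0) = \widehat{a\vect} = \pi_{\E}^\tau(a)\widehat\vect$. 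For normality I would reduce to vector functionals: for $\vect \in \E$ the map $m \mapsto \langle \pi_{\E}^\tau(m)\widehat\vect,\widehat\vect\rangle = \langle m\veco_0,\, T_\vect^*T_\vect\,\veco_0\rangle_\Hi$ is a vector functional of the normal representation $M \subseteq \B(\Hi)$, hence normal; by polarization this holds for all functionals $\omega_\zeta$ with $\zeta \in \widehat\E$, and since the normal functionals form a norm-closed subspace of $M^*$ and $\|\pi_{\E}^\tau(m)\| \le \|m\|$, a routine approximation extends normality to every vector functional on $\Hi_{\E}^\tau$, so $\pi_{\E}^\tau$ is normal. Uniqueness follows because two normal extensions agree on $A$, which is ultraweakly dense in $M$, and normal representations are ultraweakly continuous on the bounded nets furnished by the Kaplansky density theorem. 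The main obstacle is precisely the boundedness estimate for a general, possibly not even countably generated, module $\E$: everything hinges on recognizing the Gramian $[\,T_{\vect_l}^*T_{\vect_k}\,]$ as a positive element of $M_n(M')$ and exploiting its commutation with $M_n(M)$, which is what makes the naive definition of $\pi_{\E}^\tau(m)$ both well-defined and contractive.
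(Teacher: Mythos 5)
Your proof is correct, and it takes a genuinely different route from the paper's. The paper works with scalar data: for each $m \in M$ it introduces the sesquilinear form $\phi_m(\veco,\vect) = \tau(m\,\lhs{\veco}{\vect})$ on $\E$, proves boundedness via the Cauchy--Schwarz inequality for the semi-inner products $\phi_m$ (for $m \geq 0$) together with \Cref{observation:trace_positive}, obtains $\pi_\E^\tau(m)$ from the Riesz representation theorem, and then establishes normality by hand --- checking that bounded increasing nets are preserved, using the s.o.t.\ continuity of $\tau$ on $M$ --- before deducing multiplicativity separately from ultraweak continuity and the ultraweak density of $A$ in $M$. You instead work with operator data: the bounded maps $T_\vect \colon \Hi \to \Hi_\E^\tau$, whose Gramians $T_{\vect'}^*T_\vect$ are right multiplications by $\lhs{\vect}{\vect'}$ and hence lie in $M'$, let you define the extension directly by $\widehat{\vect} \mapsto T_\vect(m\veco_0)$; contractivity comes from the commutant trick, the homomorphism and adjoint properties fall out of the intertwining relation $\pi_\E^\tau(m)T_\vect = T_\vect m$ (which the paper must prove as a separate step), and normality is inherited from the standard form, since $\langle \pi_\E^\tau(m)\widehat{\vect}, \widehat{\vect'}\rangle = \langle m\veco_0, T_{\vect}^*T_{\vect'}\veco_0\rangle_\Hi$ is a vector functional of $M \subseteq \B(\Hi)$ and $M_*$ is norm-closed in $M^*$. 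This is essentially the construction of normally induced representations via a self-dual-type completion; it buys a structural, computation-free proof of normality and multiplicativity, whereas the paper's form-based argument is in exchange more elementary and self-contained (no commutant or matrix-algebra facts needed). Two cosmetic remarks: your $n \times n$ Gramian is redundant, because $\vect \mapsto \widehat{\vect}$ is linear and injective, so every element of $\widehat{\E}$ is a single $\widehat{\vect}$ and the $n=1$ case of your estimate $\langle T_\vect^*T_\vect\,(m^*m)\veco_0, \veco_0\rangle \leq \|m\|^2 \langle T_\vect^*T_\vect \veco_0,\veco_0\rangle$ already suffices; and your final step tacitly invokes the standard fact that a unital $*$-homomorphism of $M$ into $\B(\mathcal{K})$ is normal as soon as every vector state pulls back to a normal functional --- this is exactly the increasing-net implication the paper proves directly, so you should either cite it or run that two-line monotone-net argument to close the loop.
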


\begin{proof}
Given $a \in M$, we define a map $\phi_a \colon \E \times \E \to \C$ by
\[ \phi_a(\veco,\vect) = \tau(a \lhs{\veco}{\vect}) \;\;\; \text{for $a \in M$ and $\veco,\vect \in \E$} .\]
Then $\phi_a$ is linear in the first variable and conjugate-linear in the second. We also have that
\[ \phi_a(\veco,\vect) = \tau(a \lhs{\veco}{\vect}) = \tau((a^*)^* \lhs{\vect}{\veco}^*) = \tau(( \lhs{\vect}{\veco}a^*)^*) = \overline{ \tau( \lhs{\vect}{\veco}a^*)} = \overline{ \phi_{a^*}( \vect, \veco) } .\]
In particular, if $a$ is self-adjoint, then $\phi_a(\veco,\vect) = \overline{\phi_a(\vect,\veco)}$. Moreover, if $a \geq 0$, then by \Cref{observation:trace_positive} we have that
$ \phi_a(\veco,\veco) = \tau(a \lhs{\veco}{\veco}) \geq 0 .$
Thus, for fixed $a \geq 0$ we have shown that $\phi_a$ is a semi-inner product on $\E$. Consequently it satisfies the Cauchy--Schwarz inequality:
\begin{equation}
    |\phi_a(\veco,\vect)| \leq \phi_a(\veco,\veco)^{1/2} \phi_a(\vect,\vect)^{1/2} . \label{eq:cauchy-schwarz}
\end{equation}
From \Cref{observation:trace_positive} it also follows for $a \geq 0$ that
\begin{equation}
    \phi_a(\veco,\veco) = \tau(a \lhs{\veco}{\veco}) \leq \| a \| \tau(\lhs{\veco}{\veco}) = \| a \| \| \veco \|_{\Hi_{\E}^\tau}^2. \label{eq:bounded}
\end{equation}
Combining \eqref{eq:cauchy-schwarz} and \eqref{eq:bounded}, we arrive at
\[ |\phi_a(\veco,\vect)| \leq \| a \| \| \veco \|_{\Hi_{\E}^\tau} \| \vect \|_{\Hi_{\E}^\tau} \;\;\; \text{for $a \geq 0$.} \]
By writing a given $a \in M$ as a linear combination of positive elements in $M$, one easily deduces that
$\phi_a$ is a bounded, sesquilinear form on $\E$ for every $a \in M$, so it extends uniquely to a bounded, sesquilinear form $\phi_a$ on $\Hi_{\E}^\tau$. By Riesz' representation theorem there exists a unique bounded linear operator $\pi_\E^\tau(a)$ on $\Hi_{\E}^\tau$ such that
\[ \langle \pi_\E^\tau(a) \veco, \vect \rangle_{\Hi_{\E}^\tau} = \phi_a(\veco,\vect) \;\;\; \text{for all $\veco,\vect \in \Hi_{\E}^\tau$}. \]
Thus we get a map $\pi_\E^\tau \colon M \to \B(\Hi_{\E}^\tau)$. Note that for $a \in A$ and $\veco,\vect \in \E$ we have that
\[ \langle \pi_\E^\tau(a) \veco, \vect \rangle_{\Hi_{\E}^\tau} = \tau( a \lhs{\veco}{\vect}) = \tau(\lhs{a\veco}{\vect}) = \langle a\veco, \vect \rangle_{\Hi_{\E}^\tau} .\]
It follows that $\pi_\E^\tau$ extends the representation of $A$ on $\Hi_{\E}^\tau$. Since $\phi_a(\veco,\vect)$ is linear in $a$ for fixed $\veco,\vect \in \E$, it also follows that $\pi_\E^\tau$ is linear on $M$. Further, from what we showed earlier, we have that
\[ \langle \pi_\E^\tau(a)^* \veco, \vect \rangle = \overline{ \langle \pi_\E^\tau(a) \vect, \veco \rangle } = \overline{ \phi_a(\vect,\veco)} = \phi_{a^*}(\veco,\vect) = \langle \pi_\E^\tau(a^*) \veco, \vect \rangle \]
for $\veco, \vect \in \E$. This implies that $\pi_\E^\tau$ preserves adjoints.

Next, we claim that $\pi_\E^\tau$ is a positive map: Indeed, let $a \in M$ be positive. Then for every $\veco \in \E$ we get from \Cref{observation:trace_positive} that
$ \langle \pi_\E^\tau(a) \veco, \veco \rangle_{\Hi_{\E}^\tau} = \tau( a \lhs{\veco}{\veco}) \geq 0 .$
Writing a general $\veco \in \Hi_{\E}^\tau$ as a limit of a sequence $(\veco_n)_{n \in \N}$ in $\E$, we get that
\[ \langle \pi_\E^\tau(a) \veco, \veco \rangle_{\Hi_{\E}^\tau} = \lim_{n \to \infty} \langle \pi_\E^\tau(a) \veco_n, \veco_n \rangle \geq 0. \]

We will now show that $\pi_\E^\tau$ is normal. Since 
$\pi_\E^\tau$ is a positive linear map, it suffices to show that for any given bounded, increasing net $(a_i)_{i \in I}$ of positive elements in $M$ with s.o.t.\ limit $a$, $\pi_\E^\tau(a)$ is the s.o.t.\ limit of $(\pi_\E^\tau(a_i))_{i \in I}$. (By s.o.t., we mean the strong operator topology). Note that $(\pi_\E^\tau(a_i))_{i \in I}$ is a bounded, increasing net of positive operators in $\B(\Hi_{\E}^\tau)$, so it has a s.o.t.\ limit which we denote by $T$. Thus, we want to show that $\pi_\E^\tau(a) = T$.
For this, let $\veco,\vect \in \E$. Since $a$ is the s.o.t.\ limit of $(a_i)_i$, the net $(a_i \lhs{\veco}{\vect})_i$  converges in $M$ to $a \lhs{\veco}{\vect}$ in the s.o.t. Using the s.o.t.\ continuity of $\tau$, we get
\[ \lim_i \langle \pi_\E^\tau(a_i) \veco, \vect \rangle_{\Hi_{\E}^\tau} = \lim_i \tau( a_i \lhs{\veco}{\vect}) = \tau(a \lhs{\veco}{\vect}) = \langle \pi_\E^\tau(a)\veco, \vect \rangle_{\Hi_{\E}^\tau} \]
However, since $T$ is the s.o.t.\ limit of $(\pi_\E^\tau(a_i))_i$, we have $\lim_i \pi_\E^\tau(a_i) \veco = T \veco$, so
\[ \lim_i \langle \pi_\E^\tau(a_i) \veco, \vect \rangle_{\Hi_{\E}^\tau} = \langle \lim_i \pi_\E^\tau(a_i) \veco, \vect \rangle_{\Hi_{\E}^\tau} = \langle T \veco, \vect \rangle_{\Hi_{\E}^\tau} \]
as well. We have thus shown that $\langle \pi_\E^\tau(a)\veco,\vect \rangle = \langle T \veco, \vect \rangle$ for all $\veco,\vect \in \E$.

Next, let $\veco \in \E$ and $\vect \in \Hi_{\E}^\tau$. Let $\vect = \lim_n \vect_n$ where $\vect_n \in \E$ for all $n$. Then it follows easily by the above that $\langle T \veco, \vect \rangle_{\Hi_{\E}^\tau} = \lim_n \langle \pi_\E^\tau(a) \veco, \vect_n \rangle_{\Hi_{\E}^\tau} = \langle \pi_\E^\tau(a) \veco, \vect \rangle_{\Hi_{\E}^\tau}$.
Therefore, $\langle \pi_\E^\tau(a) \veco, \vect \rangle_{\Hi_{\E}^\tau} = \langle T \veco, \vect \rangle_{\Hi_{\E}^\tau}$ for arbitrary $\vect \in \Hi_{\E}^\tau$, so we can conclude that $\pi_\E^\tau(a) \veco = T \veco$ for all $\veco \in \E$. By density, it follows that $\pi_\E^\tau(a) = T$. This completes the proof of the normality of $\pi_\E^\tau$.

To see that $\pi_\E^\tau$ is multiplicative on $M$, we first observe that $\pi_\E^\tau$, being normal, is ultraweakly continuous. Moreover, $\pi_\E^\tau$ is multiplicative on $A$, and $A$ is ultraweakly dense in $M$. Since multiplication in a von Neumann algebra is separately continuous in each variable for the ultraweak topology, it is then straightforward to check that $\pi_\E^\tau(ab) = \pi_\E^\tau(a)\pi_\E^\tau(b)$, first for $a \in A$ and $b\in M$, and next for $a, b\in M$.  

Finally, since any normal representation of $M$ is ultraweakly continuous,
 $\pi_\E^\tau$ is clearly the only normal representation of $M$ on $\Hi_{\E}^\tau$ extending the given representation of $A$ on $\Hi_{\E}^\tau$. 
\end{proof}

\subsection{Localization of adjointable operators}\label{subsec:loc_op}
As in \Cref{subsec:loc}, let $\tau$ be a faithful tracial state on $A$ and let $M$ be the von Neumann algebra coming from the GNS construction applied to $(A,\tau)$. Given two Hilbert $M$-modules $\Hi$ and $\mathcal{H}'$, we denote by $\B_M(\Hi,\mathcal{H}')$
the bounded, $M$-linear operators from $\Hi$ into $\Hi'$.

We will make use of a procedure called \emph{localization} of adjointable operators. Parts of the statements in the following result can be found in \cite[p.~13]{AuEn20} and \cite[p.~58]{La95}.

\begin{lemma}\label{lem:loc_map}
Let $\E$, $\F$ and $\mathcal{K}$ be Hilbert $A$-modules. Then the following hold:
\begin{enumerate}[(i)]
    \item Every adjointable operator $T \colon \E \to \F$ extends uniquely to a bounded, $M$-linear map $T^{\tau} \colon \Hi_{\E}^{\tau} \to \Hi_{\F}^{\tau}$. 
    The map $T \mapsto T^{\tau}$ defines an injective, bounded, linear operator from 
    $ \mathcal{L}_A(\E,\F)$ into $\B_M(\Hi_{\E}^{\tau}, \Hi_{\F}^{\tau})$
    \item Let $T \in \mathcal{L}_A(\E,\F)$ and $S \in \mathcal{L}_A(\F,\mathcal{K})$. Then $(ST)^{\tau} = S^{\tau}T^{\tau}$ and $(T^{\tau})^* = (T^*)^{\tau}$.
\end{enumerate}
\end{lemma}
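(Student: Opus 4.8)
The plan is to obtain $T^\tau$ as the continuous extension of $T$ to the localized Hilbert spaces, the whole construction resting on one norm estimate. For an adjointable $T\colon\E\to\F$ the relation $T^*T\le\|T\|^2 I_\E$ in $\mathcal{L}_A(\E)$ yields the standard Hilbert $C^*$-module inequality $\lhs{T\veco}{T\veco}=\lhs{\veco}{T^*T\veco}\le\|T\|^2\lhs{\veco}{\veco}$ for every $\veco\in\E$, cf.\ \cite[Proposition 1.2]{La95}. Since both sides are positive and $\tau$, being a positive functional, is monotone on self-adjoint elements, applying $\tau$ gives $\|T\veco\|_{\Hi_\F^\tau}^2=\tau(\lhs{T\veco}{T\veco})\le\|T\|^2\tau(\lhs{\veco}{\veco})=\|T\|^2\|\veco\|_{\Hi_\E^\tau}^2$. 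Hence $\veco\mapsto T\veco$, regarded as a map from $\E$ into $\Hi_\F^\tau$, is bounded of norm at most $\|T\|$ for the localized norms; as $\E$ is dense in $\Hi_\E^\tau$, it extends uniquely to a bounded linear operator $T^\tau\colon\Hi_\E^\tau\to\Hi_\F^\tau$ with $\|T^\tau\|\le\|T\|$.

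The algebraic properties of $T\mapsto T^\tau$ then follow by checking them on the dense subspace $\E$ and invoking uniqueness of the extension. Linearity is immediate from $(\alpha S+\beta T)\veco=\alpha S\veco+\beta T\veco$, and boundedness is the estimate $\|T^\tau\|\le\|T\|$ just obtained. For injectivity, if $T^\tau=0$ then $\tau(\lhs{T\veco}{T\veco})=0$ for all $\veco\in\E$, and faithfulness of $\tau$ together with axiom (a3) forces $T\veco=0$, so $T=0$; this gives the second assertion of (i). For the formulas in (ii) one argues likewise on $\E$: for $\veco\in\E$ one has $(ST)^\tau\veco=ST\veco=S^\tau(T\veco)=S^\tau T^\tau\veco$, whence $(ST)^\tau=S^\tau T^\tau$ by density; and for $\veco\in\E$, $\vect\in\F$ the computation $\langle T^\tau\veco,\vect\rangle_{\Hi_\F^\tau}=\tau(\lhs{T\veco}{\vect})=\tau(\lhs{\veco}{T^*\vect})=\langle\veco,(T^*)^\tau\vect\rangle_{\Hi_\E^\tau}$ shows, after extension by density, that $(T^*)^\tau$ is the Hilbert-space adjoint of $T^\tau$, i.e.\ $(T^\tau)^*=(T^*)^\tau$.

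The remaining and most delicate point of (i) is the $M$-linearity of $T^\tau$, where the passage from the coefficient algebra $A$ to the enveloping von Neumann algebra $M$ must be made. Since $T$ is $A$-linear, the identity $T^\tau\pi_\E^\tau(a)=\pi_\F^\tau(a)T^\tau$ holds for $a\in A$ on the dense subspace $\E$ and hence on all of $\Hi_\E^\tau$ by continuity. To promote this to $a\in M$ I would use the normality of the representations $\pi_\E^\tau$ and $\pi_\F^\tau$ furnished by \Cref{proposition:extend_von_neumann}: for fixed $\veco\in\Hi_\E^\tau$ and $\vect\in\Hi_\F^\tau$, both $a\mapsto\langle T^\tau\pi_\E^\tau(a)\veco,\vect\rangle=\langle\pi_\E^\tau(a)\veco,(T^\tau)^*\vect\rangle$ and $a\mapsto\langle\pi_\F^\tau(a)T^\tau\veco,\vect\rangle$ are ultraweakly continuous functionals on $M$, being matrix coefficients of normal representations composed with the fixed bounded operator $T^\tau$. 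They agree on $A$, which is ultraweakly dense in $M$, and therefore agree on all of $M$; this yields $T^\tau\pi_\E^\tau(a)=\pi_\F^\tau(a)T^\tau$ for every $a\in M$, i.e.\ $T^\tau\in\B_M(\Hi_\E^\tau,\Hi_\F^\tau)$. The main obstacle is precisely this normality-based extension argument; everything else reduces to the single module inequality and routine density considerations.
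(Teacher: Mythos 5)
Your proposal is correct, and its core coincides with the paper's proof: the single estimate $\lhs{T\veco}{T\veco} \leq \|T\|^2 \lhs{\veco}{\veco}$ from \cite[Proposition 1.2]{La95}, followed by applying $\tau$ and extending by density, is exactly how the paper obtains $T^{\tau}$ with $\|T^{\tau}\| \leq \|T\|$, and your treatment of injectivity (faithfulness of $\tau$ plus axiom (a3)) and of part (ii) (checking $(ST)^{\tau} = S^{\tau}T^{\tau}$ and $(T^{\tau})^* = (T^*)^{\tau}$ on the dense subspaces) matches the paper's line for line. The one place you genuinely diverge is the $M$-linearity step. The paper takes a net $(a_i)_{i \in I}$ in $A$ converging to $a \in M$ in the strong operator topology and passes the limit through the continuous map $T^{\tau}$, writing $T^{\tau}(a\veco) = T^{\tau}(\lim_i a_i \veco) = \lim_i a_i T\veco = aT^{\tau}\veco$ for $\veco \in \E$ and then extending by density; this implicitly needs $\pi_{\E}^{\tau}(a_i)\veco \to \pi_{\E}^{\tau}(a)\veco$ in $\Hi_{\E}^{\tau}$, which rests on the normality of $\pi_{\E}^{\tau}$ from \Cref{proposition:extend_von_neumann} together with a boundedness (Kaplansky-type) consideration for the net that the paper does not spell out. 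Your argument instead compares the two ultraweakly continuous functionals $a \mapsto \langle \pi_{\E}^{\tau}(a)\veco, (T^{\tau})^*\vect\rangle$ and $a \mapsto \langle \pi_{\F}^{\tau}(a)T^{\tau}\veco, \vect\rangle$ on $M$ and uses the ultraweak density of $A$ in $M = A''$; since normal representations are precisely the ultraweakly continuous ones, this sidesteps all boundedness issues and is, if anything, slightly more airtight than the paper's net computation. Both routes invoke the same essential input, namely \Cref{proposition:extend_von_neumann}, so the difference is one of bookkeeping rather than substance; your version buys a cleaner weak-topology argument, while the paper's buys a shorter, more hands-on computation. One small dependency worth noting in your write-up: your $M$-linearity step uses $(T^{\tau})^*$, so the adjoint identity (or at least the existence of the Hilbert-space adjoint, which is automatic) must come before it, as indeed it does in your ordering.
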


\begin{proof}
(i) Let $T \colon \E \to \F$ be an adjointable map, and denote by $\| T \|$ its operator norm as a bounded linear map between the Banach spaces $\E$ and $\F$. Then by \cite[Proposition 1.2]{La95}, we have that
\[ \lhs{T\veco}{T\veco} \leq \| T \|^2 \lhs{\veco}{\veco} \;\;\; \text{for all $\veco \in \E$.} \]
Applying $\tau$ to the above inequality, it follows that
$ \| T \veco \|_{\Hi_{\F}^{\tau}}^2 \leq \| T \|^2 \| \veco \|_{\Hi_{\E}^{\tau}}^2 $ for all $\veco \in \E$. 
Hence, $T$ extends uniquely to a bounded linear operator $T^{\tau} \colon \Hi_{\E}^{\tau} \to \Hi_{\F}^{\tau}$,
%In addition, we have 
which satisfies that $\| T^{\tau} \| \leq \| T \|$, where $\| T^{\tau} \|$ denotes the operator norm of $T^{\tau}$ as a bounded linear map from $\Hi_{\E}^{\tau}$ to $\Hi_{\F}^{\tau}$. Thus $T \mapsto T^{\tau}$ defines an injective, bounded linear map $\mathcal{L}_A(\E,\F) \to \B(\Hi_{\E}^{\tau},\Hi_{\F}^{\tau})$.
For the $M$-linearity, let $a \in M$ and let $(a_i)_{i \in I}$ be a net in $A$ that converges to $a$ in the strong operator topology. Then, for any $\veco \in \E$, 
we get that
$ T^{\tau}(a\veco) = T^{\tau}(\lim_i (a_i \veco)) = \lim_i (a_i T \veco) = a T^{\tau} \veco$.
Hence, if $\veco \in \Hi_{\E}^{\tau}$, say $\veco = \lim_n \veco_n$ for
some sequence 
$(\veco_n)_{n \in \N}$ in $\E$, then
$ T^{\tau}(a \veco) = T^{\tau}( \lim_n (a \veco_n)) = \lim_n T^{\tau}(a \veco_n)  = \lim_n a T^{\tau} \veco_n = a T^{\tau} \veco$, which shows that $T^{\tau}$ is $M$-linear.

(ii) For $\veco \in \E$, 
we get that $(S^{\tau}T^{\tau})\veco = S^{\tau} T\veco = ST\veco$ since $T^{\tau}$ (resp. $S^{\tau}$) extends $T$ (resp. $S$). Thus, by uniqueness of the extension $(ST)^{\tau}$, it follows that $(ST)^{\tau} = S^{\tau} T^{\tau}$.

For $\veco \in \E$ and $\vect \in \F$, applying $\tau$ to the equality $\lhs{T\veco}{\vect} = \lhs{\veco}{T^*\vect}$ immediately gives that $\langle T\veco, \vect \rangle_{\Hi_{\F}^{\tau}} = \langle \veco, T^*\vect \rangle_{\Hi_{\E}^{\tau}}$, 
from which it readily follows that
$ \langle T^{\tau}\veco, \vect \rangle_{\Hi_{\F}^{\tau}} = \langle \veco, (T^*)^{\tau} \vect \rangle_{\Hi_{\E}^{\tau}}$ for $\veco \in \Hi_{\E}^{\tau}$ and $\vect \in \Hi_{\F}^{\tau}$. 
Hence, $(T^\tau)^* = (T^*)^{\tau}$.
\end{proof}

Setting $\E = \F = \mathcal{K}$ in \Cref{lem:loc_map}, we get
that the map $T \mapsto T^{\tau}$ from $\mathcal{L}_A(\E)$ into $\B(\Hi_{\E}^{\tau})$ is an injective $*$-homomorphism, hence an isometry, see, e.g., \cite[Theorem 3.1.5]{Mu90}. Thus,  $\mathcal{L}_A(\E)$ can be viewed as a unital $C^*$-subalgebra of $\B(\Hi_{\E}^{\tau})$, and spectral invariance is known to hold (see, e.g., \cite[Theorem 2.1.11]{Mu90}), that is,\ if $T \in \mathcal{L}_A(\E)$ is invertible as an element of $\B(\Hi_{\E}^{\tau})$, then its inverse is necessarily in $\mathcal{L}_A(\E)$.

\section{Hilbert C*-modules from projective representations of discrete groups}\label{sec:modules_projective}

This section provides a general method for the construction of Hilbert $C^*$-modules from (integrable) $\sigma$-projective representations of discrete groups. Our method follows the approach outlined in \cite{Ri85} and \cite[Section 1]{Ri88} closely, complementing it with statements on frames/Riesz sequences. 

Henceforth, $\Gamma$ denotes a countable discrete group, and $\sigma$ denotes a cocycle on $\Gamma$. In this section we set $A$ equal to $C_r^*(\Gamma,\sigma)$, the reduced twisted group $C^*$-algebra of $(\Gamma,\sigma)$, and let $\tau$ denote the canonical tracial state on $A$. We let $C_c(\Gamma, \sigma)$ denote the $*$-subalgebra of $\ell^1(\Gamma,\sigma)$ consisting of all finitely supported functions on $\Gamma$. We fix a $\sigma$-projective unitary group representation $\pi$ of $\Gamma$ on a Hilbert space $\Hip$. In this context we make the following definition:

\begin{definition}\label{def:admissible}
Let $A_0$ be a $*$-subalgebra of $\ell^1(\Gamma,\sigma)$ containing $C_c(\Gamma, \sigma)$,
and let $\Hi_0$ be a dense subspace of $\Hip$. We call the pair $(A_0, \Hi_0)$ \emph{admissible for $\pi$} if the following hold:
\begin{enumerate}
    \item For every $a \in A_0$ and $\veco \in \Hi_0$, the vector
    \begin{equation}
        a \cdot \veco := \widehat\pi(a)f=
        \sum_{\gamma \in \Gamma} a(\gamma) \pi(\gamma) \veco \label{eq:admissible_action}
    \end{equation}
    is an element of $\Hi_0$.
    \item For every $\veco,\vect \in \Hi_0$, the function $\lhs{\veco}{\vect} \colon \Gamma \to \C$ given by
    \begin{equation}
        \lhs{\veco}{\vect}(\gamma) := \langle \veco, \pi(\gamma) \vect \rangle_{\Hip} \;\;\; \text{for $\gamma \in \Gamma$} \label{eq:admissible_inner_product}
    \end{equation}
    is an element of $A_0$.
\end{enumerate}
\end{definition}

\begin{proposition}\label{prop:admissible_module}
Let $(A_0,\Hi_0)$ be an admissible pair for $\pi$. Then, with the action of $A_0$ on $\Hi_0$ given by \eqref{eq:admissible_action} and the $A_0$-valued inner product on $\Hi_0$ given by \eqref{eq:admissible_inner_product}, $\Hi_0$ becomes a left pre-inner product $A_0$-module, which can be completed to a Hilbert $A$-module $\E$.
\end{proposition}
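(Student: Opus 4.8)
The plan is to check, in order, that $\Hi_0$ is a left $A_0$-module under the action \eqref{eq:admissible_action}, that the pairing \eqref{eq:admissible_inner_product} satisfies the three axioms (a1)--(a3) of a pre-inner product $A_0$-module, and then to invoke \cite[Lemma 2.16]{RaWi98} to produce the completion $\E$. Throughout I would use that, since $a\in A_0\subseteq\ell^1(\Gamma,\sigma)$ and $\pi$ is unitary, the series in \eqref{eq:admissible_action} converges absolutely in $\Hip$ with $\|a\cdot f\|\leq\|a\|_1\|f\|$, and that $\widehat\pi$ restricts to a $*$-representation of $A_0$, so $\widehat\pi(a\ast_\sigma b)=\widehat\pi(a)\widehat\pi(b)$. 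Combined with admissibility condition (1), which keeps $a\cdot f$ inside $\Hi_0$, and with $\delta_e\in C_c(\Gamma,\sigma)\subseteq A_0$ satisfying $\delta_e\cdot f=\widehat\pi(\delta_e)f=f$, this yields at once the left $A_0$-module axioms (bilinearity and $(a\ast_\sigma b)\cdot f=a\cdot(b\cdot f)$).

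For axiom (a1), additivity in the first slot is immediate, so it remains to verify $\lhs{a\cdot f}{h}=a\ast_\sigma\lhs{f}{h}$, i.e.\ that the pairing intertwines the action on $\Hi_0$ with left twisted convolution, which is exactly the left action of $A_0$ on itself. Expanding gives $\lhs{a\cdot f}{h}(\gamma)=\sum_{\gamma'}a(\gamma')\hs{\pi(\gamma')f}{\pi(\gamma)h}$. Using $\pi(\gamma')^*=\overline{\sigma(\gamma',\gamma'^{-1})}\,\pi(\gamma'^{-1})$ and $\pi(\gamma'^{-1})\pi(\gamma)=\sigma(\gamma'^{-1},\gamma)\,\pi(\gamma'^{-1}\gamma)$, the general summand becomes $a(\gamma')\,\sigma(\gamma',\gamma'^{-1})\,\overline{\sigma(\gamma'^{-1},\gamma)}\,\lhs{f}{h}(\gamma'^{-1}\gamma)$; the cocycle identity applied to the triple $(\gamma',\gamma'^{-1},\gamma)$, together with the normalization $\sigma(e,\cdot)\equiv 1$, collapses the scalar prefactor to $\sigma(\gamma',\gamma'^{-1}\gamma)$, which is precisely the $\gamma$-th coefficient of $a\ast_\sigma\lhs{f}{h}$. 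Axiom (a2) is a similar piece of bookkeeping: from the twisted-involution formula one finds $\lhs{f}{g}^*(\gamma)=\overline{\sigma(\gamma,\gamma^{-1})}\,\hs{\pi(\gamma^{-1})g}{f}$, and rewriting $\hs{\pi(\gamma^{-1})g}{f}=\sigma(\gamma^{-1},\gamma)\hs{g}{\pi(\gamma)f}$ via $\pi(\gamma^{-1})^*=\overline{\sigma(\gamma^{-1},\gamma)}\pi(\gamma)$, the symmetry $\sigma(\gamma,\gamma^{-1})=\sigma(\gamma^{-1},\gamma)$ gives $\lhs{f}{g}^*=\lhs{g}{f}$.

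The crux is axiom (a3), where positivity of $\lhs{f}{f}$ is to be read in the completion $A=C_r^*(\Gamma,\sigma)$ and is \emph{not} a pointwise property of the function $\gamma\mapsto\hs{f}{\pi(\gamma)f}$. Here I would appeal directly to the second assertion of \Cref{prop:pos_def}: the element $\lhs{f}{f}\in A_0\subseteq\ell^1(\Gamma,\sigma)$ is the diagonal matrix coefficient of the $\sigma$-projective representation $\pi$, hence $\sigma$-positive definite, so $\widehat{\lambda_\Gamma^\sigma}(\lhs{f}{f})\geq 0$ in $A$. For definiteness, if $\lhs{f}{f}=0$ in $A$ then, since $\widehat{\lambda_\Gamma^\sigma}$ is faithful on $\ell^1(\Gamma,\sigma)$, the function $\lhs{f}{f}$ vanishes identically; evaluating at $\gamma=e$ gives $\|f\|^2=\lhs{f}{f}(e)=0$, so $f=0$, and the converse is trivial.

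With (a1)--(a3) in hand, $\Hi_0$ is a pre-inner product $A_0$-module, and \cite[Lemma 2.16]{RaWi98} completes it to a Hilbert $A$-module $\E$. I expect the main obstacle to be exactly axiom (a3): it is the only step that leaves the level of formal cocycle algebra and requires genuine operator-algebraic input, namely \Cref{prop:pos_def} (which in turn rests on characterizing positivity in $C_r^*(\Gamma,\sigma)$ through finite compressions). The verifications of (a1) and (a2) are routine but error-prone, so the secondary difficulty is simply tracking the cocycle factors; what makes everything cancel are the two elementary consequences of the cocycle identity, the normalization $\sigma(e,\cdot)=\sigma(\cdot,e)=1$ and the symmetry $\sigma(\gamma,\gamma^{-1})=\sigma(\gamma^{-1},\gamma)$.
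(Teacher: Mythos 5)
Your proof is correct and follows essentially the same route as the paper: positivity of $\lhs{\veco}{\veco}$ via the diagonal matrix coefficient characterization in \Cref{prop:pos_def}, definiteness by evaluating at $e$, and completion via \cite[Lemma 2.16]{RaWi98}. The only (harmless) deviation is in axiom (a1): you verify $\lhs{a\cdot \veco}{h} = a \ast_\sigma \lhs{\veco}{h}$ for arbitrary $a \in A_0$ by a direct termwise cocycle computation justified by absolute convergence, whereas the paper first establishes it for $a = \delta_\gamma$ via the intertwining relation \eqref{eq:matrix_coefficient_intertwining} and then extends by linearity and $\ell^1$-density --- both rest on the same cocycle identity, and you additionally spell out (a2) and the module axioms, which the paper treats as routine.
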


\begin{proof}
The assumption that $(A_0,\Hi_0)$ is an admissible pair ensures that the action of $A_0$ on $\Hi_0$ is well-defined into $\Hi_0$ and that the inner product on $\Hi_0$ takes values in $A_0$. Since $A_0$ contains $C_c(\Gamma, \sigma)$, it is dense in $\ell^1(\Gamma,\sigma)$. Moreover,  $\ell^1(\Gamma,\sigma)$ is dense in $C_r^*(\Gamma,\sigma)$, and the $\ell^1$-norm dominates the $C_r^*$-norm on $\ell^1(\Gamma,\sigma)$. Hence it follows that $A_0$ is dense in $C_r^*(\Gamma,\sigma)$.

For the $A_0$-linearity in the first argument of $\lhs{\cdot}{\cdot}$, let $\veco,\vect \in \Hi_0$. Consider first $a = \delta_{\gamma}$ for some $\gamma \in \Gamma$. Note that $\lhs{\veco}{\vect}$ is simply the matrix coefficient $C_{\vect} \veco$ associated to $\pi$ as in \Cref{sub:projective}. 
Using relation \eqref{eq:matrix_coefficient_intertwining}, we get
\begin{align*}
     \lhs{ a \cdot \veco}{ \vect}(\gamma') &= 
     \langle \pi(\gamma) f , \pi(\gamma')g\rangle_{\Hip}= [C_{\vect} (\pi(\gamma)\veco)](\gamma')
      = C_{\vect} \veco (\gamma^{-1}\gamma')\, \sigma(\gamma, \gamma^{-1}\gamma') \\
      &= \lhs{\veco}{\vect}(\gamma^{-1}\gamma')\, \sigma(\gamma, \gamma^{-1}\gamma') = \big(a *_\sigma \lhs{\veco}{\vect}\big)(\gamma')
 \end{align*}
for every $\gamma' \in \Gamma$, i.e., $\lhs{ a \cdot \veco}{ \vect} = a *_\sigma \lhs{\veco}{\vect}$.  
This formula clearly extends by linearity to every $a \in C_c(\Gamma, \sigma)$. Now assume that $a\in A_0$ and pick a sequence $(a_n)_{n=1}^\infty$ in $C_c(\Gamma, \sigma)$ such that $\|a-a_n\|_1 \to 0$ as $n \to \infty$. Using what we just have shown, we get that, for every $\gamma' \in \Gamma,$
\begin{align*}
 \big|\big(\lhs{ a \cdot \veco}{\vect}-a *_\sigma \lhs{\veco}{\vect}\big)(\gamma')\big| 
 &\leq \, \big|\lhs{ (a-a_n) \cdot \veco}{\vect}(\gamma')\big|\, +\, \big|\big((a_n-a) *_\sigma \lhs{\veco}{\vect}\big)(\gamma')\big| \\
 &\leq\, \big|\langle (a-a_n) \cdot \veco, \pi(\gamma') \veco\rangle_{\Hip}\big|\, +\, \|(a-a_n)*_\sigma \lhs{\veco}{\vect}\|_1 
 \\ &\leq \,\|(a-a_n) \cdot \veco\|_{\Hip}\, \|\pi(\gamma') \veco\|_{\Hip} \,+\, \|a-a_n\|_1 \, \|\lhs{\veco}{\vect}\|_1 \\&\leq \,\|a-a_n\|_1 \,\|\veco\|^2_{\Hip} \,+\, \|a-a_n\|_1 \, \|\lhs{\veco}{\vect}\|_1\\
 & \to 0
 \end{align*}
as $n \to \infty$. Hence, $\lhs{ a \cdot \veco}{ \vect} = a *_\sigma \lhs{\veco}{\vect}$ for every $a\in A_0$, as desired.

Moreover, for each $\veco \in \Hi_0$, the function $\lhs{\veco}{\veco} \in A_0 \subseteq \ell^1(\Gamma, \sigma)$ is a diagonal matrix coefficient on $\Gamma$ 
associated to $\pi$,
hence positive in $C_r^*(\Gamma,\sigma)$ by \Cref{prop:pos_def}.

Lastly, if $\lhs{\veco}{\veco} = 0$ for some $\veco \in \Hi_0$, then 
$\| \veco \|_{\Hip}^2 = \lhs{\veco}{\veco}(e) =0$, hence $\veco = 0$. This proves all the properties needed for $\Hi_0$ to be a pre-inner product $A_0$-module as defined in \Cref{subsec:hilb_mod}.
\end{proof}

Let $(A_0,\Hi_0)$ be an admissible pair for $\pi$, as in Proposition \ref{prop:admissible_module}.
Note that for $\veco, \vect \in \Hi_0$ we have
\[ \tau(\lhs{\veco}{\vect}) = \lhs{\veco}{\vect}(e) = \langle \veco, \pi(e) \vect \rangle_{\Hip} = \langle \veco, \vect \rangle_{\Hip} .\]
This implies that the localization space $\Hi_\E^\tau$ 
can be naturally identified with 
$\Hip$.
Thus, the representation $\pi_\E^\tau$ of $A$ on $\Hi_\E^\tau$ induces a representation $\pi_r$ of $A=C_r^*(\Gamma, \sigma)$ on $\Hip$, which satisfies that $\widehat\pi = \pi_r \circ \widehat{\lambda_\Gamma^\sigma}$. (This shows that $\pi$ is weakly contained in $\lambda_\Gamma^\sigma$ whenever there exists an admissible pair for $\pi$.) 

Similarly, for $a,b \in \ell^1(\Gamma, \sigma)$, the localized inner product on $A$ as a left module over itself is given by: 
\begin{align*} \tau( \lhs{a}{b}) &= (a*b^*)(e) = \sum_{\gamma \in \Gamma} \sigma(\gamma,\gamma^{-1} e)a(\gamma)b^*(\gamma^{-1}e) = \sum_{\gamma \in \Gamma}  \sigma(\gamma,\gamma^{-1}) a(\gamma) \overline{\sigma(\gamma^{-1},\gamma)} \overline{b(\gamma)} \\
&= \langle a, b \rangle_{\ell^2(\Gamma)}. 
\end{align*}
Consequently, the localization space $\Hi_A^\tau$ of $A$, considered as a left module over itself, can be naturally identified with $\ell^2(\Gamma)$. 
It is readily verified that the representation $\pi_A^\tau$ of $A$
on $\Hi_A^\tau$ corresponds then to the identity representation of $A$ on  $\ell^2(\Gamma)$.

\begin{proposition}\label{prop:generating_multiwindow}
Let $(A_0,\Hi_0)$ be an admissible pair for $\pi$, and $\E$ be  the associated Hilbert $A$-module. Let $\vect_1, \ldots, \vect_n \in \Hi_0$. Then the following hold:
\begin{enumerate}[(i)]
    \item The finite set $\{ \vect_1, \ldots, \vect_n \}$ is an algebraic generating set for $\E$ if and only if
    \[ (\pi(\gamma) \vect_j)_{\gamma \in \Gamma, 1 \leq j \leq n} \]
    is a frame for $\Hip$.
    \item The finite set $\{ \vect_1, \ldots, \vect_n \}$ is an $A$-linearly independent set in $\E$ with closed $A$-span if and only if
    \[ (\pi(\gamma) \vect_j)_{\gamma \in \Gamma, 1 \leq j \leq n} \]
    is a Riesz sequence in $\Hip$.
\end{enumerate}
\end{proposition}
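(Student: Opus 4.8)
The plan is to express both equivalences through the invertibility of the frame operator $\frameop$ and the Gramian operator $\gramian$ of $\{\vect_1,\ldots,\vect_n\}$ (\Cref{lem:frame_gen} and \Cref{prop:gramian_invertible}), and to transport these invertibility statements to $\Hip$ by localizing with respect to $(A,\tau)$, as developed in \Cref{subsec:loc} and \Cref{subsec:loc_op}. Since $\tau(\lhs{\veco}{\vect}) = \langle \veco,\vect\rangle_{\Hip}$ for $\veco,\vect\in\Hi_0$, the localization $\Hi_{\E}^{\tau}$ is identified with $\Hip$, as already noted above. As the localization construction is additive over finite direct sums and $\Hi_A^{\tau}\cong\ell^2(\Gamma)$, I would further identify $\Hi_{A^n}^{\tau}\cong\ell^2(\Gamma)^n\cong\ell^2(\Gamma\times\{1,\ldots,n\})$.

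The crux is to identify the localized analysis and synthesis operators with their Hilbert-space counterparts. For $\veco\in\Hi_0$ the $j$-th component of $\analysis\veco$ is the function $\gamma\mapsto\lhs{\veco}{\vect_j}(\gamma)=\langle\veco,\pi(\gamma)\vect_j\rangle_{\Hip}$, so under the identifications above $\analysis^{\tau}$ agrees on the dense subspace $\Hi_0$ with the Hilbert-space analysis operator $\veco\mapsto(\langle\veco,\pi(\gamma)\vect_j\rangle)_{\gamma,j}$ of the system $(\pi(\gamma)\vect_j)_{\gamma\in\Gamma,\,1\leq j\leq n}$. Since $\analysis$ is adjointable, hence bounded, its localization $\analysis^{\tau}$ is bounded by \Cref{lem:loc_map}, so the two operators, agreeing on a dense subspace, coincide; in particular the system is automatically Bessel. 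The same reasoning shows that $\synthesis^{\tau}$ is the Hilbert-space synthesis operator of $(\pi(\gamma)\vect_j)$. Applying \Cref{lem:loc_map}(ii), I then obtain $\frameop^{\tau}=(\analysis^{*}\analysis)^{\tau}=(\analysis^{\tau})^{*}\analysis^{\tau}$ and $\gramian^{\tau}=(\synthesis^{*}\synthesis)^{\tau}=(\synthesis^{\tau})^{*}\synthesis^{\tau}$, which are precisely the frame operator on $\Hip$ and the Gramian operator on $\ell^2(\Gamma\times\{1,\ldots,n\})$ of the system $(\pi(\gamma)\vect_j)$.

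Both assertions now follow formally. For (i), the set $\{\vect_1,\ldots,\vect_n\}$ generates $\E$ if and only if $\frameop$ is invertible in $\mathcal{L}_A(\E)$ by \Cref{lem:frame_gen}; by the spectral invariance of the embedding $\mathcal{L}_A(\E)\hookrightarrow\B(\Hi_{\E}^{\tau})$ recorded after \Cref{lem:loc_map}, this is equivalent to invertibility of $\frameop^{\tau}$ on $\Hip$, i.e.\ to $(\pi(\gamma)\vect_j)$ being a frame (\Cref{sec:frames}). For (ii), the set is $A$-linearly independent with closed $A$-span if and only if $\gramian$ is invertible in $\mathcal{L}_A(A^n)$ by \Cref{prop:gramian_invertible}; the same spectral-invariance principle, now applied to the module $A^n$, makes this equivalent to invertibility of $\gramian^{\tau}$ on $\ell^2(\Gamma\times\{1,\ldots,n\})$, i.e.\ to the Riesz sequence condition.

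The main obstacle is the bookkeeping underlying the key computation: one must carefully track how $\lhs{\veco}{\vect_j}$, regarded as a function on $\Gamma$ and then embedded in $\ell^2(\Gamma)$ through $\Hi_A^{\tau}\cong\ell^2(\Gamma)$, reproduces the scalar coefficients $\langle\veco,\pi(\gamma)\vect_j\rangle$, and one must confirm that localization respects the direct-sum structure of $A^n$. Once these identifications are pinned down, the invertibility criteria together with spectral invariance render the remainder routine.
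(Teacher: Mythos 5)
Your proposal is correct and follows essentially the same route as the paper's proof: identifying the localizations $\Hi_{\E}^{\tau}\cong\Hip$ and $\Hi_{A^n}^{\tau}\cong\ell^2(\Gamma\times\{1,\ldots,n\})$, matching the localized analysis/synthesis operators with their Hilbert-space counterparts on the dense subspace $\Hi_0$, transferring this to the frame and Gramian operators via \Cref{lem:loc_map}, and concluding with \Cref{lem:frame_gen}, \Cref{prop:gramian_invertible}, and the spectral-invariance observation recorded after \Cref{lem:loc_map}. No gaps; the bookkeeping you flag is exactly what the paper's proof carries out.
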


\begin{proof}
Denote by $\analysis \colon \E \to A^n$ the analysis operator associated to a finite set $\{ \vect_1, \ldots, \vect_n \} \subseteq \E$, so $\analysis \veco 
= (\lhs{\veco}{\vect_j})_{j=1}^n $ for $f \in \E$. It maps $\Hi_0$ into $\ell^1(\Gamma)^n \cong \ell^1(\Gamma \times \{1, \ldots, n \})$, and after this identification, it acts on $\Hi_0$ by
\[ \analysis \veco 
= (\langle \veco, \pi(\gamma) \vect_j \rangle )_{\gamma \in \Gamma, 1 \leq j \leq n} .\]
Thus, the action of $\analysis$ on $\Hi_0$ coincides with the action of the analysis operator $\overline{\analysis} \colon \Hip \to \ell^2(\Gamma \times \{ 1, \ldots, n \})$ associated to the system $(\pi(\gamma) \vect_j)_{\gamma \in \Gamma, 1 \leq j \leq n}$. By density, it follows that the
localized operator
$\analysis^\tau: \Hi_\E^\tau \to \Hi_{A^n}^\tau$ 
can be identified with
$\overline{\analysis}$. Similarly, the localization of the synthesis operator $\synthesis \colon A^n \to \E$ 
can be identified with the synthesis operator $\overline{\synthesis} \colon \ell^2(\Gamma \times \{1, \ldots, n \}) \to \Hip$ of $(\pi(\gamma) \vect_j)_{\gamma \in \Gamma, 1 \leq j \leq n}$. Consequently, by \Cref{lem:loc_map},  the same identifications hold for the frame operator $\frameop \in \mathcal{L}_A(\E)$ and the Gramian operator $\gramian \in \mathcal{L}_A(A^n)$.

(i) By the discussion below \Cref{lem:loc_map}, the frame operator $\frameop \in \mathcal{L}_A(\E)$ is invertible if and only if its localization, the frame operator $\overline{\frameop}$ associated to $(\pi(\gamma) \vect_j)_{\gamma \in \Gamma, 1 \leq j \leq n}$, is invertible. Invertibility of the former is equivalent to $\{\vect_1, \ldots, \vect_n \}$ being a generating set by \Cref{lem:frame_gen}, while invertibility of the latter is equivalent to $(\pi(\gamma)\vect_j)_{\gamma \in \Gamma, 1 \leq j \leq n}$ being a frame for $\Hip$. 

(ii) Similarly, the Gramian associated to $\{ \vect_1, \ldots, \vect_n \}$ is invertible if and only if the Gramian $\overline{\gramian}$ associated to $(\pi(\gamma)\vect_j)_{\gamma \in \Gamma, 1 \leq j \leq n}$ is invertible. The former invertibility is equivalent to $\{ \vect_1, \ldots, \vect_n \}$ being $A$-linearly independent with closed $A$-span by \Cref{prop:gramian_invertible}, while the latter invertibility is equivalent to $(\pi(\gamma)\vect_j)_{\gamma \in \Gamma, 1 \leq j \leq n}$ being a Riesz sequence for $\Hip$.
\end{proof}

\section{Strict comparison in C*-algebras} \label{sec:strict_comparison}
Throughout this section, $A$ denotes a unital $C^*$-algebra.

\subsection{Strict comparison of positive elements and of projections}\label{subsec:strict_comp_pos} We first recall a few facts about the comparison of positive elements in $C^*$-algebras, originally introduced by Cuntz \cite{Cun}. We follow R\o rdam \cite{Ror2} (see e.g.~\cite{APT, Ror1, Str} for alternative presentations). 

Let 
$A^+$ denote the cone of positive elements of $A$. For $m, n \in \N$, we let $M_{m,n}(A)$ denote the space of $m\times n$ matrices over $A$.
We let $M_\infty(A)^+$ denote the (disjoint) union $\bigcup_{n\in \N} M_n(A)^+$. For $a, b \in M_\infty(A)^+$, say $a \in M_n(A)^+$ and $b\in M_m(A)^+$, we say that $a$ is \emph{Cuntz subequivalent} to $b$, and write $a\preceq b $, if there exists a sequence $(r_k)_{k=1}^\infty$ in $M_{m,n}(A)$ such that $\|r_k^*br_k - a\| \to 0$ in $M_n(A)$ as $k\to \infty$. 

Assume now that $A$ 
has at least one tracial state. If  $\tau$ is a tracial state on $A$, we define $\tau:M_n(A)\to \C$  for each $n\in \N$ by $\tau([a_{ij}]) := \sum_{i=1}^n \tau(a_{ii})$. Moreover, we define $d_\tau:M_\infty(A)^+\to [0, \infty)$ by $d_\tau(a)= \lim_{k\to\infty} \tau(a^{1/k})$ whenever $a \in  M_n(A)^+$. Then we say that $A$ has \emph{strict comparison of  positive elements} whenever the following implication holds for $a, b\in M_\infty(A)^+ $:
\[ \text{If } d_\tau(a) < d_\tau(b) \text{ for every tracial state } \tau \text{ on} A, \text{ then } a\preceq b.\]
We note that this property implies that $A$ has \emph{strict comparison of projections}, in the following extended sense (compared to \cite[FCQ2, p.~22]{Bla} and \cite[Definition 11.3.8]{Str}):  

\smallskip \quad \quad If $n\in \N$ and $p, q $ are projections in $M_n(A)$ such that $\tau(p) < \tau(q)$ for every tracial 

\quad \quad state $\tau$ on $A$, then $p$ is \emph{Murray-von Neumann subequivalent} to $q$ in $M_n(A)$, i.e., 

\quad \quad there exists some $v \in M_n(A)$ 
such that $p= v^*v$ and $vv^* \leq q$. 

\medskip 
Indeed, if $A$ has strict comparison of positive elements and the projections $p, q \in M_n(A)$ satisfy the assumption above, then we readily get that $d_\tau(p) = \tau(p) < \tau(q) = d_\tau(q)$ for every tracial state $\tau \text{ on} A$, hence that $p\preceq q$. This means that there exists a sequence $(r_k)_{k=1}^\infty$ in $M_n(A)$ such that $r_k^*qr_k \to p$ as $k\to \infty$, and it is well-known that this implies that  $p$ is Murray-von Neumann subequivalent to $q$ in $M_n(A)$, cf.~\cite[Proposition 2.1]{Ror1} or \cite[Proposition 14.2.1]{Str}.  

\medskip
We next mention some conditions ensuring that strict comparison of positive elements hold whenever $A$ belongs to a certain class of $C^*$-algebras. 
As we will not work explicitly with any of the properties involved, we do not recall the lengthy definitions and simply refer the reader to Strung's book \cite{Str} for  undefined terminology in the following theorem and in the comments related to it. 

\begin{theorem}[\cite{W, Ror1, Ror2}] \label{WR} 
Let $A$ be a unital, separable, simple, nuclear, infinite-dimensional $C^*$-algebra with at least one tracial state, and let $\mathcal{Z}$ denote the Jiang-Su algebra \cite{JS}.  Consider the following conditions:
\begin{itemize}
\item[1)] $A$ has finite decomposition rank;
\item[2)]  $A$ has finite nuclear dimension;
\item[3)] $A$ is $\mathcal{Z}$-stable;
\item[4)] $A$ has strict comparison of positive elements.
\item[5)] $A$ has stable rank one.
\end{itemize}
Then $1)$ $\Rightarrow$ $2)$ $\Rightarrow$ $3)$ $\Rightarrow$ $4)$. We also have $3)$ $\Rightarrow$ $5)$.  
\end{theorem}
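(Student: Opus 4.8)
The plan is to establish the chain by assembling several results from the structure theory of simple nuclear $C^*$-algebras, treating each arrow separately. Under the standing hypotheses—$A$ unital, separable, simple, nuclear, infinite-dimensional, with at least one tracial state—all of the required external theorems apply verbatim, so the proof is essentially a matter of citing the literature in the correct order and checking that the hypotheses match.

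First I would dispose of 1) $\Rightarrow$ 2), which needs no real work: the nuclear dimension of any $C^*$-algebra is bounded above by its decomposition rank, since the completely positive approximations witnessing finite decomposition rank already satisfy the (weaker) defining requirement for nuclear dimension. Hence finite decomposition rank forces finite nuclear dimension. The substantive step is 2) $\Rightarrow$ 3), which is Winter's theorem \cite{W}: for a simple, separable, unital, nuclear, infinite-dimensional $C^*$-algebra, finite nuclear dimension implies $\mathcal{Z}$-stability. This is the deepest ingredient and the one I expect to carry all the analytic weight; fortunately it is available off the shelf under exactly our hypotheses.

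For 3) $\Rightarrow$ 4) I would invoke Rørdam's work \cite{Ror1, Ror2}: $\mathcal{Z}$-stability implies that the Cuntz semigroup of $A$ is almost unperforated, and almost unperforation is precisely what yields strict comparison of positive elements with respect to the dimension functions $d_\tau$ recalled in \Cref{subsec:strict_comp_pos}. Here I would note that nuclearity (hence exactness) is what allows one to work with tracial states rather than quasitraces, so that the comparison is governed by the $d_\tau$ for $\tau$ a tracial state, as demanded in the statement. For 3) $\Rightarrow$ 5) I would again appeal to Rørdam \cite{Ror1}: a simple, unital, stably finite, $\mathcal{Z}$-stable $C^*$-algebra has stable rank one. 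Stable finiteness is automatic here, since the existence of a tracial state on $A$—and hence on each matrix amplification $M_n(A)$—rules out nontrivial infinite projections.

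The only genuine obstacle is the analytic content of Winter's implication 2) $\Rightarrow$ 3); every other step reduces to a direct citation once the hypotheses are lined up. Accordingly, the main care needed is bookkeeping: verifying that our standing assumptions on $A$ coincide with the precise hypotheses demanded in \cite{W, Ror1, Ror2}, and translating ``almost unperforation of the Cuntz semigroup'' into the $d_\tau$-formulation of strict comparison of positive elements used in \Cref{subsec:strict_comp_pos}.
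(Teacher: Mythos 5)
Your proposal is correct and takes essentially the same route as the paper: 1) $\Rightarrow$ 2) by definition, 2) $\Rightarrow$ 3) via Winter \cite[Corollary 7.3]{W}, 3) $\Rightarrow$ 4) via R{\o}rdam's almost-unperforation of the Cuntz semigroup under $\mathcal{Z}$-stability together with Haagerup's quasitraces-are-traces result for exact algebras (the paper additionally invokes Blackadar--Handelman's characterization of lower semicontinuous dimension functions to pass to the $d_\tau$-formulation), and 3) $\Rightarrow$ 5) via R{\o}rdam's stable rank one theorem. The one bibliographic slip is that the stable rank one implication is \cite[Theorem 6.7]{Ror2}, not \cite{Ror1}, though your verification of the finiteness hypothesis via the existence of a tracial state is exactly the needed check.
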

The first implication is by definition, the second is due to Winter, cf.~\cite[Corollary 7.3] {W}, and the third is due to R\o rdam, cf.~\cite[Corollary 4.6]{Ror2} and \cite[Theorem 15.4.6]{Str}; to be a bit more precise, this implication can be deduced from \cite[Theorem 4.5]{Ror2} by arguing in the same way as in the proof of \cite[Theorem 5.2 (a)]{Ror1}, taking into account Blackadar and Handelman's characterization of lower semi-continuous dimension functions, cf.\ \cite{BlHa82}, and Haagerup's result that quasitraces on exact $C^*$-algebras, hence on nuclear $C^*$-algebras, are traces, cf.\ \cite{Ha14}. The implication $3)$ $\Rightarrow$ $5)$ follows from \cite[Theorem 6.7]{Ror2}.

Let $A$ be as in Theorem \ref{WR}. 
  For completeness, we add that if $A$ is also assumed to have a unique tracial state (or more generally, if the extreme boundary of the tracial state space of $A$ has a finite topological dimension), then Matui and Sato have shown in \cite{MS} that 4) implies 2), i.e., conditions 2), 3) and 4) are equivalent in this case. This means that the (revised) Toms-Winter conjecture (cf.~ \cite[p. 302]{Str}) holds in this case.
 
\subsection{Relation to finitely generated Hilbert C*-modules}\label{subsec:proj_trace}

Consider the Hilbert $A$-modules $\E = A^n p$ and $\F = A^n q$ associated to some projections $p,q \in M_n(A)$. Then $p$ is Murray--von Neumann subequivalent to $q$ if and only if $\E$ is isomorphic to an orthogonally complementable submodule of $\F$, i.e.,\ there exists an $A$-submodule $\E'$ of $\F$ such that $\E \cong \E'$ and $\E' \oplus \E'^{\perp} = \F$. This is again equivalent to the existence of an adjointable isometry $\E \to \F$.

Let $\E$ be a finitely generated Hilbert $A$-module. If $A$ has a tracial state $\tau$, then 
we define \[\tau(\E) = \tau(p) = \sum_i \tau(p_{ii})\] for any projection $p=[p_{ij}] \in M_n(A)$ such that $\E \cong A^n p$. (It is not difficult to check that $\tau(p) = \tau(q)$ if we also have $\E\cong A^k q$ for some projection $q \in M_k(A)$).

The correspondence between finitely generated Hilbert $A$-modules and projections in matrix algebras over $A$ allows us to prove the following result in the presence of strict comparison of projections (in the sense defined in the previous subsection):

\begin{proposition}\label{prop:trace_generators}
Suppose $A$ 
has at least one tracial state and strict comparison of projections. Let $\E$ be a finitely generated Hilbert $A$-module and $n \in \N$. Then the following hold:
\begin{enumerate}
    \item[(i)] If $\tau(\E) < n$ for all tracial states $\tau$ on $A$, then $\E$ admits a generating set with $n$ elements.
    \item[(ii)] If $\tau(\E) > n$ for all tracial states $\tau$ on $A$, then $\E$ admits an $A$-linearly independent set with $n$ elements such that its $A$-span is closed.
\end{enumerate}
\end{proposition}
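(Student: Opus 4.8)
The plan is to reduce both statements to the definition of strict comparison of projections from \Cref{subsec:strict_comp_pos}, using the correspondence between finitely generated Hilbert $A$-modules and projections in matrix algebras over $A$. Since $\E$ is finitely generated, \Cref{prop:generating_isometry}(i) provides some $m \in \N$ and a projection $p \in M_m(A)$ with $\E \cong A^m p$; by definition $\tau(\E) = \tau(p)$ for every tracial state $\tau$ on $A$. To compare $p$ with the identity projection $1_n \in M_n(A)$ (for which $A^n \cong A^n 1_n$ and $\tau(1_n) = n$, using that a tracial state satisfies $\tau(1_A) = 1$), I would first place both projections inside a common matrix algebra: fixing $N \geq \max\{m,n\}$ and embedding $M_m(A)$ and $M_n(A)$ as the top-left corners of $M_N(A)$, one obtains $\E \cong A^N p$ and $A^n \cong A^N q$, where $q \in M_N(A)$ is the image of $1_n$. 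Because the corner embeddings only introduce zero diagonal entries, the diagonal-sum formula $\tau([a_{ij}]) = \sum_i \tau(a_{ii})$ shows that the trace values are unchanged: $\tau(p) = \tau(\E)$ and $\tau(q) = n$ for every tracial state $\tau$ on $A$.

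For part (i), the hypothesis gives $\tau(p) = \tau(\E) < n = \tau(q)$ for all tracial states $\tau$ on $A$, so strict comparison of projections yields that $p$ is Murray--von Neumann subequivalent to $q$ in $M_N(A)$. By the discussion opening \Cref{subsec:proj_trace}, this is equivalent to the existence of an adjointable isometry $\E \cong A^N p \to A^N q \cong A^n$, which by \Cref{prop:generating_isometry}(i) is in turn equivalent to $\E$ admitting a generating set with $n$ elements. Part (ii) is entirely symmetric: now the hypothesis gives the reversed inequality $\tau(q) = n < \tau(\E) = \tau(p)$ for all tracial states, so strict comparison yields that $q$ is Murray--von Neumann subequivalent to $p$, hence an adjointable isometry $A^n \cong A^N q \to A^N p \cong \E$, which by \Cref{prop:generating_isometry}(ii) gives an $A$-linearly independent set of $n$ elements in $\E$ with closed $A$-span.

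The argument is short precisely because the heavy lifting is done by the two inputs already established: \Cref{prop:generating_isometry}, which translates the module-theoretic conclusions (generating set, respectively $A$-linearly independent set with closed span) into the existence of adjointable isometries to or from the free module $A^n$, and the equivalence recorded at the start of \Cref{subsec:proj_trace} between such isometries and Murray--von Neumann subequivalence of the associated projections. I expect the only genuine care to be needed in the bookkeeping of the common ambient matrix algebra $M_N(A)$, and in verifying that the trace values $\tau(\E)$ and $n$ are preserved under the corner embeddings, so that the numerical hypotheses feed directly into the definition of strict comparison of projections; once this is in place, both implications follow by reading the same chain of equivalences in the two opposite directions.
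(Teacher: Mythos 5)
Your proposal is correct and follows essentially the same route as the paper's proof: realize $\E \cong A^k p$, pad $p$ and the identity $I_n$ with zero blocks into a common $M_N(A)$ (the paper uses $\widetilde{p}$, $\widetilde{I_n}$ and $m = \max(n,k)$), check that the padding preserves the trace values, apply strict comparison to get Murray--von Neumann subequivalence, and translate this into an adjointable isometry $\E \to A^n$ (resp.\ $A^n \to \E$) which \Cref{prop:generating_isometry} converts into the desired generating set (resp.\ $A$-linearly independent set with closed $A$-span). The only cosmetic difference is that you phrase the bridge between subequivalence and isometries via the equivalence recorded at the start of \Cref{subsec:proj_trace}, which is exactly the fact the paper invokes implicitly.
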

\begin{proof}
(i) Assume that $\tau(\E) < n $ for all tracial states $\tau$ on $A$.  Since $\E$ is finitely generated, we can find $k \in \N$ and a projection $p \in M_k(A)$ such that $\E \cong A^k p$. We let $I_n$ denote the $n \times n$ identity matrix in $M_n(A)$, and $0_r$ denote the zero matrix in $M_r(A)$ for $r \in \N$.
If $b \in M_n(A)$ and $c\in M_r(A)$, we denote by $b\oplus c$ the matrix in $M_{n+r}(A)$ given by 
$b\oplus c := \left(\begin{matrix} b & 0 \\ 0 &c\end{matrix}\right)$.
Set \[\widetilde p 
= \begin{cases}p \oplus 0_{n-k}& \text{if } k<n,\\
 p & \text{if } n \leq k,
 \end{cases}, \quad  
 \widetilde{I_n} = \begin{cases}  I_n & \text{if } k\leq n,\\
 I_n\oplus 0_{k-n} & \text{if } n \leq k.
 \end{cases} 
 \] 
 Further, set $m =\max(n,k)$. Then $\widetilde{p}$ and $\widetilde{I_n}$ are projections in $M_m(A)$, satisfying that 
 \[\tau(\widetilde p) = \tau(p) < n = \tau(I_n) = \tau(\widetilde{I_n})\]
 for all tracial states $\tau$ on $A$. 
 Strict comparison of projections implies that $\widetilde p \preceq \widetilde{I_n}$. In terms of Hilbert $A$-modules, this means that there exists an adjointable isometry $A^m\widetilde{p}  \to  A^m \widetilde{I_n}$, so we get an adjointable isometry \[\E \cong A^k p \cong A^m\widetilde{p} \, \to \,  A^m \widetilde{I_n} \cong A^n,\] which by \Cref{prop:generating_isometry} means that $\E$ admits a generating set consisting of $n$ elements.

(ii) Arguing similarly as in (i), we now get that  there exists an adjointable isometry $A^n \to \E$, which by \Cref{prop:generating_isometry} means that $\E$ admits an $A$-linearly independent set with $n$ elements that has closed $A$-span.
\end{proof}

Assume now that $A$ has a faithful tracial state $\tau$ and $\E$ is a finitely generated Hilbert $A$-module. Then one may use the localization procedure of \Cref{subsec:loc} to express $\tau(\E)$ in terms of the dimension of the Hilbert $M$-module $\Hi_{\E}^\tau$, where $M$ is the von Neumann algebra associated to $(A, \tau)$.
Indeed, by \Cref{prop:generating_isometry} (and its proof), we can find an adjointable isometry $\analysis \colon \E \to A^n$ for some $n \in \N$, and we then have $\Img(\analysis) = A^n p$ for some projection $p \in M_n(A)$. Considering the localization of $\analysis$ (with respect to $\tau$) as in \Cref{lem:loc_map}, we obtain a bounded, $M$-linear map 
$\analysis^\tau \colon \Hi_{\E}^\tau \to \Hi_{A^n}^\tau \cong (\Hi_{A}^\tau)^n = L^2(M,\tau)^n $. In addition, since $\analysis^*\analysis = I$, we obtain by \Cref{lem:loc_map} that $(\analysis^{\tau})^*\analysis^{\tau} = (\analysis^*\analysis)^{\tau} = I$, hence $\analysis^{\tau}$ is an isometry. 
Moreover, we have $\Img(\analysis^\tau) = L^2(M, \tau)^n p$, where we consider $p$ as a projection in $M_n(M)$. 
It follows that 
$\Hi_{\E}^\tau$ is finitely generated as a Hilbert $M$-module (cf.~\cite[Proposition 8.5.3]{AnPo}) and that its \emph{dimension} (with respect to $\tau$) is given by 
\[ \dim_{(M,\tau)} \Hi_{\E}^\tau = \tau(p), 
\]
cf.~\cite[Definition 8.5.4]{AnPo}; see also \cite[Definition 1.6]{lueck1998dimension}, and \cite[Chapter 2]{JoSu97} in the case where $M$ is a $II_1$-factor.

The above discussion yields the following result.
\begin{proposition}\label{prop:trace_dimension}
Suppose $\E$ is  finitely generated Hilbert $A$-module and $\tau$ is a faithful tracial state on $A$. Then
\[ \tau(\E) = \dim_{(M,\tau)} \Hi_{\E}^\tau .\]
\end{proposition}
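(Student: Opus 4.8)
The plan is to make precise the argument sketched in the paragraph preceding the statement: transport the projection $p$ describing $\E$ through the localization functor and recognize the outcome as the projection computing the von Neumann dimension of $\Hi_{\E}^\tau$. First I would invoke \Cref{prop:generating_isometry}: since $\E$ is finitely generated, there is an adjointable isometry $\analysis \colon \E \to A^n$ for some $n \in \N$, and $\Img(\analysis) = A^n p$, where $p = \analysis\analysis^*$ is a projection in $M_n(A)$ (as in the proof of that proposition, $\analysis\analysis^*$ is precisely right multiplication by $p$, i.e.\ the orthogonal projection of $A^n$ onto $A^n p$). Thus $\E \cong A^n p$, and by the definition of $\tau$ on finitely generated modules we have $\tau(\E) = \tau(p)$.

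Next I would localize with respect to $\tau$ using \Cref{lem:loc_map}. The isometry $\analysis$ yields an $M$-linear map $\analysis^\tau \colon \Hi_{\E}^\tau \to \Hi_{A^n}^\tau$, and under the natural identification $\Hi_{A^n}^\tau \cong (\Hi_A^\tau)^n = L^2(M,\tau)^n$ this is an $M$-linear map into $L^2(M,\tau)^n$. By functoriality of localization, $(\analysis^\tau)^*\analysis^\tau = (\analysis^*\analysis)^\tau = I$, so $\analysis^\tau$ is an isometry, and likewise $\analysis^\tau(\analysis^\tau)^* = (\analysis\analysis^*)^\tau = p^\tau$. Hence $\analysis^\tau$ restricts to a unitary isomorphism of Hilbert $M$-modules $\Hi_{\E}^\tau \cong \Img(\analysis^\tau) = \Img(p^\tau)$.

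The step I expect to be the crux is the identification of the localized projection $p^\tau$ with right multiplication by $p$, now regarded as an element of $M_n(M)$, acting on $L^2(M,\tau)^n$. On the dense subspace $A^n \subseteq L^2(M,\tau)^n$ the adjointable projection $\analysis\analysis^*$ acts by $a \mapsto ap$, and right multiplication by $p \in M_n(M)$ is a bounded, $M$-linear operator on $L^2(M,\tau)^n$ (the left $M$-action and the right $p$-action commute) that agrees with this action on $A^n$; by density and the uniqueness clause in \Cref{lem:loc_map}(i), the two coincide, so $\Img(p^\tau) = L^2(M,\tau)^n p$. Care is needed here only in matching the scalar product obtained by localizing the $A$-valued inner product on $A^n$ with the standard inner product on $L^2(M,\tau)^n$, which amounts entrywise to the computation $\tau(\lhs{a}{b}) = \tau(ab^*) = \langle a,b\rangle_\tau$ recorded in \Cref{subsec:loc}.

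Combining these identifications gives an isomorphism $\Hi_{\E}^\tau \cong L^2(M,\tau)^n p$ of Hilbert $M$-modules with $p \in M_n(M)$, which exhibits $\Hi_{\E}^\tau$ as finitely generated over $M$ and, by \cite[Definition 8.5.4]{AnPo}, computes its dimension as $\dim_{(M,\tau)}\Hi_{\E}^\tau = \tau(p)$, where $\tau$ now denotes the faithful normal tracial extension to $M$ constructed in \Cref{subsec:loc}. Since this extension agrees with $\tau$ on $A$, the value $\tau(p)$ is the same whether $p$ is regarded in $M_n(A)$ or in $M_n(M)$, and we conclude $\tau(\E) = \tau(p) = \dim_{(M,\tau)}\Hi_{\E}^\tau$, as desired.
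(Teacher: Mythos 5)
Your proposal is correct and takes essentially the same route as the paper's own argument (which appears as the discussion immediately preceding \Cref{prop:trace_dimension}): use \Cref{prop:generating_isometry} to obtain an adjointable isometry $\analysis \colon \E \to A^n$ with $\Img(\analysis) = A^n p$ for a projection $p \in M_n(A)$, localize it via \Cref{lem:loc_map} to an $M$-linear isometry of $\Hi_{\E}^\tau$ onto $L^2(M,\tau)^n p$, and read off $\dim_{(M,\tau)} \Hi_{\E}^\tau = \tau(p) = \tau(\E)$. The care you take in identifying $(\analysis\analysis^*)^\tau$ with right multiplication by $p$ on $L^2(M,\tau)^n$ (via density, the uniqueness clause of \Cref{lem:loc_map}(i), and the entrywise matching of the localized and $L^2$ inner products) merely fills in a step the paper asserts without detail, so there is no substantive difference.
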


\subsection{Twisted group C*-algebras of finitely generated, nilpotent groups}\label{sec:nilpotent} 
In this section we will 
extend the results from \cite{EGMK, EMK} on the finite decomposition rank (nuclear dimension) of group $C^*$-algebras associated to
finitely generated nilpotent groups
to twisted group algebras 
of such groups.
By \Cref{WR}, this will imply the presence of strict comparison of projections, 
which will allow us to exploit Proposition \ref{prop:trace_generators} 
in the setting of Section \ref{sec:modules_projective}. 

The following result 
(cf.~\cite[Theorem 3.5]{HNS}) will be useful to us for extending the relevant results in \cite{EGMK, EMK}.

\begin{theorem}[\cite{HNS}]\label{fg-nil} Let $\Gamma$ be a finitely generated nilpotent group. Then $\Gamma$ has a representation group $\widetilde\Gamma$ which is finitely generated and nilpotent. 
\end{theorem}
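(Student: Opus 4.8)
The plan is to realize $\widetilde\Gamma$ as a Schur cover of $\Gamma$ — a stem extension realizing the full Schur multiplier — and then to read off finite generation and nilpotency directly from the shape of the construction. First I would choose a free presentation $1 \to R \to F \xrightarrow{q} \Gamma \to 1$ with $F$ free of finite rank, which is available because $\Gamma$ is finitely generated; moreover, since $\Gamma$ is finitely generated nilpotent it is polycyclic, hence finitely presented, so $R$ may be taken finitely generated as a normal subgroup of $F$. Passing to the quotient by $[F,R]$ gives a central extension
\[ 1 \longrightarrow R/[F,R] \longrightarrow F/[F,R] \longrightarrow \Gamma \longrightarrow 1, \]
in which $R/[F,R]$ is central and, because the normal generators of $R$ generate it modulo $[F,R]$ as an \emph{abelian} group, finitely generated abelian.

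Next I would isolate the multiplier inside this kernel via Hopf's formula, which identifies $(R\cap[F,F])/[F,R]$ with $H_2(\Gamma,\Z)$. The quotient $R/(R\cap[F,F]) \cong R[F,F]/[F,F]$ embeds in the finitely generated free abelian group $F/[F,F]$, hence is free of finite rank, so the exact sequence of abelian groups
\[ 0 \longrightarrow H_2(\Gamma,\Z) \longrightarrow R/[F,R] \longrightarrow R/(R\cap[F,F]) \longrightarrow 0 \]
has projective cokernel and splits. Fixing a complement $C \leq R/[F,R]$ (automatically central, hence normal in $F/[F,R]$), I set $\widetilde\Gamma := (F/[F,R])/C$. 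This yields a central extension $1 \to A \to \widetilde\Gamma \to \Gamma \to 1$ whose kernel $A \cong (R/[F,R])/C \cong H_2(\Gamma,\Z)$ is contained in $[\widetilde\Gamma,\widetilde\Gamma]$ (being the image of $R\cap[F,F]$), i.e.\ a stem extension realizing the whole multiplier. It is a representation group: any $\sigma$-projective representation of $\Gamma$ has class in $H^2(\Gamma,\C^\times) \cong \operatorname{Hom}(H_2(\Gamma,\Z),\C^\times)$ — the $\operatorname{Ext}$-term of the universal coefficient theorem vanishing since $\C^\times$ is divisible — and the corresponding cocycle pulls back to a coboundary on $\widetilde\Gamma$, so the representation lifts.

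The two structural claims then follow from $F/[F,R]$ alone. For nilpotency: if $\Gamma$ has class $c$ then $\gamma_{c+1}(F) \subseteq R$, whence $[\gamma_{c+1}(F),F] \subseteq [R,F] = [F,R]$, so $F/[F,R]$ is nilpotent of class at most $c+1$ and its quotient $\widetilde\Gamma$ is nilpotent. For finite generation: $F$ is finitely generated, hence so are $F/[F,R]$ and its quotient $\widetilde\Gamma$. Thus nilpotency and finite generation are essentially automatic once $\Gamma$ is nilpotent, while the complement $C$ is used only to pin the kernel down to the full multiplier.

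The step I expect to carry the real weight is the verification of the universal lifting property, namely that the inflation map $H^2(\Gamma,\C^\times) \to H^2(\widetilde\Gamma,\C^\times)$ vanishes — equivalently that the transgression of the extension surjects onto $H_2(\Gamma,\Z)$. This is the classical heart of Schur's theory and is what makes the construction a genuine representation group rather than an arbitrary central extension; for infinite groups it requires the cohomological matching of $H^2(\Gamma,\C^\times)$ with $\operatorname{Hom}(H_2(\Gamma,\Z),\C^\times)$ indicated above. The finite generation of $H_2(\Gamma,\Z)$, which keeps $A$ (and hence the whole cover) under control, is the other place where the nilpotent — equivalently polycyclic, finitely presented — structure of $\Gamma$ is indispensable, since representation groups of arbitrary finitely generated groups need be neither finitely generated nor nilpotent.
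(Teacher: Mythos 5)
The paper contains no internal proof of this statement: Theorem \ref{fg-nil} is imported verbatim from \cite[Theorem 3.5]{HNS}, so your proposal can only be judged on its own merits. The routine parts of your construction are correct. Passing from a finite-rank free presentation $1 \to R \to F \to \Gamma \to 1$ (with $R$ finitely generated as a normal subgroup, since finitely generated nilpotent groups are polycyclic, hence finitely presented) to the central extension by $R/[F,R]$, observing that $R/[F,R]$ is finitely generated abelian because conjugation acts trivially modulo $[F,R]$, invoking Hopf's formula, splitting off the free abelian image $R[F,F]/[F,F] \leq F/[F,F]$, and quotienting by a central complement $C$ is the classical stem-cover construction; and your two structural verifications are sound: $\gamma_{c+1}(F) \subseteq R$ gives $\gamma_{c+2}(F) \subseteq [F,R]$, so $F/[F,R]$ has class at most $c+1$, and finite generation passes to all quotients of $F$.

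The genuine gap is the step you flag and then do not carry out: that your stem extension $1 \to A \to \widetilde\Gamma \to \Gamma \to 1$, with $A$ merely \emph{abstractly} isomorphic to $H_2(\Gamma,\Z)$, satisfies the paper's definition of representation group, i.e.\ that the transgression $\operatorname{Hom}(A,\C^\times) \to H^2(\Gamma,\C^\times)$ is an isomorphism. Your sentence "the corresponding cocycle pulls back to a coboundary on $\widetilde\Gamma$" is precisely the assertion to be proved, not a consequence of the construction so far: for infinite groups, a stem cover is guaranteed only to make the \emph{homological} transgression $d \colon H_2(\Gamma,\Z) \to A$ surjective, and what is needed is that $d$ be an isomorphism (under the universal coefficient identification $H^2(\Gamma,\C^\times) \cong \operatorname{Hom}(H_2(\Gamma,\Z),\C^\times)$, valid since $\C^\times$ is divisible, the cohomological transgression becomes $\chi \mapsto \chi \circ d$, which is bijective exactly when $d$ is). Two short arguments close this, and either should be included. (i) Identify $d$ concretely: for the extension $1 \to R/[F,R] \to F/[F,R] \to \Gamma \to 1$ the five-term homology sequence realizes Hopf's formula, with transgression the inclusion $(R\cap[F,F])/[F,R] \hookrightarrow R/[F,R]$; by naturality, quotienting by $C$ composes this with the projection onto $(R/[F,R])/C$, which is an isomorphism precisely because $C$ is a complement of $(R\cap[F,F])/[F,R]$. (ii) Alternatively, use that $H_2(\Gamma,\Z)$ is a subgroup of the finitely generated abelian group $R/[F,R]$, hence finitely generated, hence Hopfian: the stem property gives a surjection $d \colon H_2(\Gamma,\Z) \twoheadrightarrow A \cong H_2(\Gamma,\Z)$, which must then be an isomorphism. (Injectivity of the cohomological transgression, which you also need for the paper's definition, follows from $A \subseteq [\widetilde\Gamma,\widetilde\Gamma]$ via the five-term sequence.) This is exactly where the finite generation of $H_2(\Gamma,\Z)$ you mention in passing actually does its mathematical work; without one of these arguments your $\widetilde\Gamma$ is established only as a stem cover, not yet as a representation group in the sense required.
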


In \cite[Definition 1.1]{HNS}, a group $\widetilde\Gamma$ is called a \emph{representation group}\footnote{There is a similar notion of representation group for certain locally compact groups which was introduced by C.C.~Moore \cite{Moore1964}, see also \cite[Section 3]{Pac2}.} 
of a discrete group $\Gamma$ if there is a central extension $1 \to B \to \widetilde\Gamma \to \Gamma \to 1$ 
such that the associated transgression map from ${\rm Hom}(B, \C^\times)$ into the second cohomology group $H^2(\Gamma, \C^\times)$
is an isomorphism.   As pointed out in \cite[Section 3]{HNS}, see~\cite{BT, S} for more information, $\Gamma$ has always a representation group $\widetilde\Gamma$, and one may alternatively say that $ \widetilde\Gamma$ is a \emph{Schur cover} of $\Gamma$ (sometimes called  a \emph{stem cover} of $\Gamma$), meaning that 
there is a central extension \[1 \to N \to \widetilde\Gamma \to \Gamma \to 1\] such that $N$ is contained in the commutator subgroup 
of  $ \widetilde\Gamma$ and is isomorphic to the second homology group $H_2(\Gamma, \Z)$.
We let $\phi: \widetilde \Gamma \to \Gamma$ denote the homomorphism appearing in the sequence above. 

The relevance of representation groups for our purposes lies in the fact that any projective representation of $\Gamma$ corresponds to a genuine representation of  $\widetilde\Gamma$. Indeed, let  $\sigma \in Z^2(\Gamma, \T)$ and let $\pi$ be a $\sigma$-projective \emph{unitary} representation $\Gamma$ on a Hilbert space $\Hip$. Identifying the circle group $\T$ with the center $\T\cdot I_{\Hip}$ of the group of unitary operators $\mathcal{U}(\Hip)$, and letting $q: \mathcal{U}(\Hip)\to \mathcal{U}(\Hip)/\T$ denote the quotient map, we get a homomorphism $\rho_\pi: \Gamma \to \mathcal{U}(\Hip)/\T$ given by $\rho_\pi = q\circ \pi$. Since ${\rm Ext}(\Gamma_{\rm ab}, \T)= 0$ (because $\T$ is divisible, cf.~\cite[Chapter III, Proposition 2.6]{HS}),
 we may then invoke \cite[Proposition V.5.5]{S}, and deduce that there exists a homomorphism $\widetilde \pi: \widetilde\Gamma \to \mathcal{U}(\Hip)$, i.e., a unitary representation $\widetilde \pi$ of $\widetilde\Gamma$ on $\Hip$, satisfying that
$q\circ \widetilde \pi = \rho_\pi\circ \phi $. 

From the relation between $\pi$ and $\widetilde{\pi}$, it follows readily that for every $\gamma \in \widetilde \Gamma$, there exists (a unique) $\mu_{\gamma} \in \T$ such that  
\[\widetilde \pi(\gamma) = \mu_{\gamma}\, \pi(\phi(\gamma))\,.\] 
Note that $\pi$ is irreducible if and only if $\widetilde \pi$ is irreducible: Indeed, as $\phi$ is surjective, we obviously have $\pi(\Gamma)' = \widetilde \pi(\widetilde\Gamma)'$, so this is a consequence of Schur's lemma. It is also immediate that $\pi(\Gamma)$ and $\widetilde \pi(\widetilde\Gamma)$ generate the same $C^*$-algebra of operators on $\Hip$.

If $\pi$ is a $\sigma$-projective unitary representation  of $\Gamma$ on a Hilbert space $\Hip$, we will denote by $C^*(\pi(\Gamma))$ the 
$C^*$-subalgebra of $\mathcal{B}(\Hip)$ generated by $\pi(\Gamma)$. In other words, $C^*(\pi(\Gamma))= \widehat\pi(C^*(\Gamma, \sigma))$. 
Since $C^*(\Gamma, \sigma)$ is nuclear whenever $\Gamma$ is amenable \cite[Corollary 3.9]{PR}, and as any 
$*$-homomorphic image of a nuclear $C^*$-algebra is also nuclear (see for example \cite[Corollary IV.1.13]{Bla2006}),
we get in particular that $ C^*(\pi(\Gamma))$ is nuclear whenever $\Gamma$ is nilpotent. 
Moreover,
using results due to Eckhardt, Gillaspy and McKenney \cite{EGMK} and Eckhardt and Gillaspy \cite{EG} in the case of ordinary unitary representations, we obtain the following:  

\begin{theorem} \label{fg-nil2}
Let $\Gamma$ be a finitely generated nilpotent group and $\sigma \in Z^2(\Gamma, \T)$. Let $\pi$ be a $\sigma$-projective unitary representation  of $\Gamma$ on a Hilbert space $\Hip$.
Then 
 \begin{itemize}
 \item[(i)] $C^*(\pi(\Gamma))$ and $C^*(\Gamma, \sigma)\simeq C_r^*(\Gamma, \sigma)$ are
 nuclear, quasidiagonal and have finite decomposition rank\,$;$ 
 \item[(ii)] If $\pi$ is irreducible, then $C^*(\pi(\Gamma))$ is also simple, it satisfies the universal coefficient theorem $($UCT$)$, and its decomposition rank is less or equal to $1$.
 \end{itemize}
\end{theorem}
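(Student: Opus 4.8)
The plan is to reduce everything to the \emph{untwisted} group $C^*$-algebra of the representation group $\widetilde\Gamma$ and then invoke the deep results of \cite{EGMK} and \cite{EG}. First I would record that $\Gamma$, being nilpotent, is amenable, so that $C^*(\Gamma,\sigma)\simeq C_r^*(\Gamma,\sigma)$ and, as already noted in the discussion preceding the statement, every $C^*(\pi(\Gamma))=\widehat\pi(C^*(\Gamma,\sigma))$ is nuclear (being a $*$-homomorphic image of the nuclear algebra $C^*(\Gamma,\sigma)$, cf.\ \cite{PR} and \cite{Bla2006}). The crucial structural input, supplied by the paragraphs before the theorem, is that the representation $\widetilde\pi$ of the representation group $\widetilde\Gamma$ satisfies $C^*(\pi(\Gamma))=C^*(\widetilde\pi(\widetilde\Gamma))=\widehat{\widetilde\pi}(C^*(\widetilde\Gamma))$; thus $C^*(\pi(\Gamma))$ is a quotient of the ordinary group $C^*$-algebra $C^*(\widetilde\Gamma)$, and by \Cref{fg-nil} the group $\widetilde\Gamma$ is again finitely generated and nilpotent.

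For part (i) I would then argue as follows. By \cite{EGMK}, $C^*(\widetilde\Gamma)$ has finite decomposition rank. Since the decomposition rank does not increase when passing to quotients, the quotient $C^*(\pi(\Gamma))$ has finite decomposition rank as well; and since finite decomposition rank entails quasidiagonality for separable $C^*$-algebras, $C^*(\pi(\Gamma))$ is quasidiagonal. Specializing to the regular representation $\pi=\lambda_\Gamma^\sigma$, for which $C^*(\lambda_\Gamma^\sigma(\Gamma))=C_r^*(\Gamma,\sigma)\simeq C^*(\Gamma,\sigma)$, gives the same three conclusions for $C^*(\Gamma,\sigma)\simeq C_r^*(\Gamma,\sigma)$, completing (i).

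For part (ii), if $\pi$ is irreducible then, as observed before the statement, $\widetilde\pi$ is irreducible too. Now I would feed the irreducible representation $\widetilde\pi$ of the finitely generated nilpotent group $\widetilde\Gamma$ into the results of Eckhardt and Gillaspy \cite{EG}, which yield that $C^*(\widetilde\pi(\widetilde\Gamma))$ is simple, satisfies the UCT, and has decomposition rank at most $1$. Since $C^*(\pi(\Gamma))=C^*(\widetilde\pi(\widetilde\Gamma))$, the conclusions of (ii) follow verbatim.

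The genuinely new content here is the reduction itself rather than any single hard estimate: the heavy lifting—finite decomposition rank, simplicity and classifiability in the untwisted case, and the existence of a finitely generated nilpotent representation group—is outsourced to \cite{EGMK}, \cite{EG} and \cite{HNS}. The main points requiring care are therefore, first, setting up the representation-group correspondence so that $C^*(\pi(\Gamma))$ coincides \emph{on the nose} with a quotient of $C^*(\widetilde\Gamma)$, which is precisely what the relations $\widetilde\pi(\gamma)=\mu_\gamma\,\pi(\phi(\gamma))$ provide, and second, invoking the permanence of decomposition rank (and of nuclearity and quasidiagonality) under quotients. I expect this second point, specifically the quotient-permanence of decomposition rank together with the passage from finite decomposition rank to quasidiagonality, to be the most delicate bookkeeping step.
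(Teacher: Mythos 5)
Your proposal is correct and follows essentially the same route as the paper: pass to the finitely generated nilpotent representation group $\widetilde\Gamma$ (Theorem \ref{fg-nil}), use the identity $C^*(\pi(\Gamma)) = C^*(\widetilde\pi(\widetilde\Gamma))$, invoke \cite[Theorem 5.1]{EGMK} together with the quotient-permanence of finite decomposition rank (the paper cites \cite[(3.3)]{KW}) and the implication from finite decomposition rank to quasidiagonality for (i), and obtain the statement for $C^*(\Gamma,\sigma)\simeq C_r^*(\Gamma,\sigma)$ by specializing to $\pi = \lambda_\Gamma^\sigma$, exactly as the paper does. The only (harmless) divergence is in the bookkeeping for (ii): the paper derives simplicity from Poguntke's theorem that primitive ideals of $C^*(\widetilde\Gamma)$ are maximal \cite{Pog} and gets decomposition rank $\leq 1$ from \cite[Theorem 6.2]{EGMK}, citing \cite[Theorem 3.5]{EG} only for the UCT, whereas you attribute all three conclusions to \cite{EG}.
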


\begin{proof} (i) The assertion about nuclearity  follows from our comment above. Next, according to Theorem \ref{fg-nil}, $\Gamma$ has a representation group $\widetilde\Gamma$ which is finitely generated and nilpotent. As explained previously, there exists a unitary representation $\widetilde \pi$ of $\widetilde\Gamma$ on $\Hip$ such that $C^*(\pi(\Gamma)) = C^*(\widetilde \pi(\widetilde \Gamma))$.  
Now,
\cite[Theorem 5.1]{EGMK} gives that $C^*(\widetilde\Gamma)$ has finite decomposition rank. Using \cite[(3.3)]{KW}, we deduce that 
any $*$-homomorphic image of $C^*(\widetilde\Gamma)$, in particular 
$C^*(\widetilde \pi(\widetilde\Gamma))$, 
has finite decomposition rank. 
Since $C_r^*(\Gamma, \sigma) = C^*(\lambda_\Gamma^\sigma(\Gamma)) = C^*(\widetilde{\lambda_\Gamma^\sigma}(\widetilde\Gamma))$, this implies that $C^*(\Gamma, \sigma)\simeq C_r^*(\Gamma, \sigma)$ has finite decomposition rank. Since any separable $C^*$-algebra with finite decomposition rank is quasidiagonal (cf.~\cite[Theorem 17.4.3]{Str}), we have shown (i).

(ii) Assume now that $\pi$ is irreducible. Then $\widetilde \pi$ is irreducible too. As any primitive ideal of $C^*(\widetilde\Gamma)$ is maximal \cite{Pog}, we get that $C^*(\widetilde \pi(\widetilde\Gamma))$ is simple. Further, \cite[Theorem 3.5]{EG} gives that $C^*(\widetilde \pi(\widetilde \Gamma))$ satisfies the UCT. Finally, \cite[Theorem 6.2]{EGMK} gives that the decomposition rank of $C^*(\widetilde \pi(\widetilde\Gamma))$ is less or equal to $1$. 
\end{proof}

\begin{remark}
By definition, the decomposition rank of a separable $C^*$-algebra bounds its nuclear dimension. Thus, in Theorem \ref{fg-nil2}
we could replace \emph{finite decomposition rank} with \emph{finite nuclear dimension}.
To show that this weaker property holds, we could then have invoked \cite[Theorem 4.4]{EMK} instead of quoting \cite[Theorem 5.1]{EGMK}. 
\end{remark}

\begin{theorem} \label{fg-nil3}
Let $\Gamma$ be a 
finitely generated nilpotent group and $\sigma \in Z^2(\Gamma, \mathbb{T})$.  
Assume that $(\Gamma, \sigma)$ satisfies Kleppner's condition.

Then $C^*(\Gamma, \sigma)\simeq C_r^*(\Gamma, \sigma)$ is a $($unital, separable, nuclear$)$ simple, quasidiagonal $C^*$-algebra with a unique tracial state, which satisfies the UCT and has decomposition rank less or equal to $1$.   
\end{theorem}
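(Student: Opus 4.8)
The plan is to obtain \Cref{fg-nil3} by assembling the structural results of \Cref{fg-nil2} with the simplicity criterion supplied by Kleppner's condition, and then to treat uniqueness of the trace as a separate, harder point. The elementary properties come for free: $\Gamma$ is finitely generated, hence countable, so $\ell^1(\Gamma,\sigma)$ and therefore $C^*(\Gamma,\sigma)$ are separable, and $\delta_e$ is a unit. Since $\Gamma$ is nilpotent it is amenable, so $\widehat{\lambda_\Gamma^\sigma}\colon C^*(\Gamma,\sigma)\to C_r^*(\Gamma,\sigma)$ is an isomorphism and both algebras are nuclear, as recorded in \Cref{subsec:group_operator_algebras}. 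Nuclearity, quasidiagonality and finiteness of the decomposition rank are then exactly \Cref{fg-nil2}(i).

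To upgrade the decomposition rank to the bound $\le 1$ and to secure the UCT, I would use that $C_r^*(\Gamma,\sigma)$ is simple: for a finitely generated nilpotent group Kleppner's condition is equivalent to simplicity (cf.~\cite{Kle62,P}), as already noted in the excerpt. Fix an irreducible $\sigma$-projective unitary representation $\pi$ of $\Gamma$, which exists by the Gelfand--Naimark theory recalled in \Cref{subsec:group_operator_algebras}. As $C^*(\Gamma,\sigma)$ is simple, the surjection $\widehat{\pi}\colon C^*(\Gamma,\sigma)\to C^*(\pi(\Gamma))$ has zero kernel, hence is an isomorphism. By \Cref{fg-nil2}(ii) the target satisfies the UCT and has decomposition rank $\le 1$, and both properties are isomorphism invariants, so they pass to $C_r^*(\Gamma,\sigma)$.

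The genuinely delicate assertion is uniqueness of the tracial state. Writing $u_\gamma=\lambda_\Gamma^\sigma(\gamma)$, a tracial state $\tau'$ is determined by $t(\gamma):=\tau'(u_\gamma)$, and the canonical trace corresponds to $t=\delta_e$; since $t(e)=1$, it suffices to prove $t(\gamma)=0$ for all $\gamma\ne e$. When $\gamma$ is not $\sigma$-regular one may pick $\gamma'$ commuting with $\gamma$ with $\sigma(\gamma,\gamma')\ne\sigma(\gamma',\gamma)$; then $u_{\gamma'}u_\gamma u_{\gamma'}^{-1}=\frac{\sigma(\gamma',\gamma)}{\sigma(\gamma,\gamma')}\,u_\gamma$, and the trace identity forces $t(\gamma)=0$. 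By Kleppner's condition every nontrivial element with a finite conjugacy class fails to be $\sigma$-regular, so this phase argument already annihilates $t$ on the whole finite-conjugacy-class part of $\Gamma$, in particular on $Z(\Gamma)\setminus\{e\}$.

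What remains, and what I expect to be the main obstacle, is to show $t(\gamma)=0$ for the $\sigma$-regular elements, which by Kleppner's condition all have infinite conjugacy classes; here the phase argument yields nothing. Because $\Gamma$ is amenable, the Powers-type averaging in the $C^*$-norm used for $C^*$-simple groups is unavailable, so the nilpotent structure must enter. The approach I would take is to reduce to extreme traces: each is faithful on the simple algebra $C_r^*(\Gamma,\sigma)$ and generates, by nuclearity, an injective finite factor, namely the hyperfinite $\mathrm{II}_1$ factor in the infinite-dimensional case (the canonical trace generates $\vN(\Gamma,\sigma)$, which is a factor precisely because of Kleppner's condition); the finite-dimensional case $\Gamma$ finite is trivial, since then $C_r^*(\Gamma,\sigma)\cong M_n(\mathbb{C})$. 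One would then identify the associated character $t$ through the representation group $\widetilde\Gamma$ of \Cref{fg-nil}, under which $\tau'$ becomes a trace of $C^*(\widetilde\Gamma)$ with a prescribed central character; invoking the classification of the trace simplex of finitely generated nilpotent group $C^*$-algebras, Kleppner's condition should leave the canonical trace as the only compatible extreme point. Making this final identification precise is the crux of the proof.
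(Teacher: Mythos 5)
Your first two paragraphs reproduce the paper's argument exactly: unitality, separability and nuclearity are elementary, quasidiagonality and finite decomposition rank come from \Cref{fg-nil2}(i), and the upgrade to decomposition rank $\leq 1$ plus the UCT is obtained, just as in the paper, by picking an irreducible $\sigma$-projective representation $\pi$ and using simplicity of $C^*(\Gamma,\sigma)$ to conclude $\widehat\pi$ is an isomorphism onto $C^*(\pi(\Gamma))$, to which \Cref{fg-nil2}(ii) applies. That part is correct and is essentially the paper's proof.

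The genuine gap is the uniqueness of the tracial state, which you correctly identify as the hard point but do not actually prove. Your phase argument disposes of the non-$\sigma$-regular elements, but under Kleppner's condition the remaining content is precisely the vanishing of $t$ on $\sigma$-regular elements with infinite conjugacy classes, and there your sketch does not close: the $\ell^2$-summability argument that kills infinite conjugacy classes for the \emph{canonical} trace vector is unavailable for an arbitrary tracial state $\tau'$, since $\gamma \mapsto \tau'(u_\gamma)$ need not lie in $\ell^2(\Gamma)$; and the proposed reduction via extreme traces, hyperfiniteness, and a ``classification of the trace simplex'' of $C^*(\widetilde\Gamma)$ for finitely generated nilpotent groups invokes a result that is not available off the shelf in the literature cited here. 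Carrying that program out would amount to re-proving, through the character theory of nilpotent groups, exactly the theorem of Packer that the paper cites: for a countable nilpotent group $\Gamma$ with $(\Gamma,\sigma)$ satisfying Kleppner's condition, $C^*(\Gamma,\sigma) \simeq C_r^*(\Gamma,\sigma)$ is simple \emph{with a unique tracial state}, cf.~\cite{P}. You cite \cite{P} only for the simplicity equivalence; the same reference also settles trace uniqueness, which is how the paper's proof handles it in one line. So the proposal is repaired by a citation, but as written the crux you flag remains an unproven step, not a proof.
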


\begin{proof} 
The fact that $C^*(\Gamma, \sigma)\simeq C_r^*(\Gamma, \sigma)$ is simple with a unique tracial state whenever $\Gamma$ is nilpotent and $(\Gamma, \sigma)$ satisfies Kleppner's condition is due to Packer, cf.~\cite{P}. Let now $\pi$ be any $\sigma$-projective irreducible unitary representation of $\Gamma$. Since $C^*(\Gamma, \sigma)$ is simple, we have $C^*(\Gamma, \sigma)\simeq C^*(\pi(\Gamma))$, which we know is quasidiagonal and has decomposition rank less or equal to $1$ from 
Theorem \ref{fg-nil2}.  
\end{proof}
 
Combining Theorem \ref{WR} and Theorem \ref{fg-nil3}, we finally get:

\begin{corollary}\label{fg-nil4}
Let $\Gamma$ be a finitely generated nilpotent group, $\sigma \in Z^2(\Gamma, \mathbb{T})$, and assume  
 $(\Gamma, \sigma)$ satisfies Kleppner's condition. Then $C^*(\Gamma, \sigma)\simeq C_r^*(\Gamma, \sigma)$ has 
strict comparison of positive elements and  stable rank one. 
In particular, $C_r^*(\Gamma, \sigma)$
has strict comparison of projections (in our extended sense).
\end{corollary}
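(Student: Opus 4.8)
The plan is to read off the hypotheses of Theorem \ref{WR} from Theorem \ref{fg-nil3} and then run through its chain of implications. First I would set $A = C_r^*(\Gamma,\sigma)$, which is isomorphic to $C^*(\Gamma,\sigma)$ because $\Gamma$, being nilpotent, is amenable. Theorem \ref{fg-nil3} tells me that $A$ is unital, separable, simple, nuclear, carries a unique tracial state, and has decomposition rank at most $1$; in particular it has finite decomposition rank (condition $1)$ of Theorem \ref{WR}) and at least one tracial state. The two target properties, strict comparison of positive elements and stable rank one, are precisely conditions $4)$ and $5)$ there, reachable from $1)$ via $1)\Rightarrow 2)\Rightarrow 3)\Rightarrow 4)$ and $3)\Rightarrow 5)$.

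The one hypothesis of Theorem \ref{WR} not handed to me by Theorem \ref{fg-nil3} is infinite-dimensionality, and this is where I would insert a case distinction. If $\Gamma$ is infinite, then $\{\lambda_\Gamma^\sigma(\gamma) : \gamma \in \Gamma\}$ is an infinite linearly independent subset of $A$, so $A$ is infinite-dimensional and Theorem \ref{WR} applies directly, yielding strict comparison of positive elements and stable rank one. If $\Gamma$ is finite, then $A$ is finite-dimensional and simple, hence $A \cong M_n(\C)$ for some $n \in \N$; here both conclusions are classical, since a full matrix algebra has stable rank one and its Cuntz comparison of positive elements is governed by the rank, equivalently by the (unique) trace. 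Either way the first assertion of the corollary follows.

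For the final clause I would invoke the elementary implication already recorded in \Cref{subsec:strict_comp_pos}: strict comparison of positive elements forces strict comparison of projections in the extended sense, because for a projection $p$ one has $d_\tau(p) = \tau(p)$, so the hypothesis $\tau(p) < \tau(q)$ for all traces gives $p \preceq q$, which for projections means Murray--von Neumann subequivalence by \cite[Proposition 2.1]{Ror1}. In the finite-dimensional branch this is anyway immediate from comparison of ranks.

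I expect the main obstacle to be exactly the infinite-dimensionality requirement of Theorem \ref{WR}. Kleppner's condition is perfectly compatible with $\Gamma$ finite (for instance a finite abelian group such as $(\Z/n)^2$ equipped with a non-degenerate symplectic cocycle, for which $C^*(\Gamma,\sigma) \cong M_n(\C)$), in which case $A$ is a matrix algebra and Theorem \ref{WR} is simply not available. Recognising that this degenerate case must be split off, and verifying that the two conclusions hold there by hand, is the only genuinely non-mechanical point in combining the two theorems.
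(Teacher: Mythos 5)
Your proposal is correct and follows the same route as the paper: the paper's entire proof of \Cref{fg-nil4} is the one-line remark that Theorem \ref{WR} and Theorem \ref{fg-nil3} combine to give the statement, which is exactly your chain $1)\Rightarrow 2)\Rightarrow 3)\Rightarrow 4)$ together with $3)\Rightarrow 5)$, followed by the implication from strict comparison of positive elements to strict comparison of projections already recorded in \Cref{subsec:strict_comp_pos}. The one point where you go beyond the paper is the infinite-dimensionality hypothesis of Theorem \ref{WR}: the paper's proof of \Cref{fg-nil4} does not address it (it surfaces only later, as the parenthetical ``and $\Gamma$ is infinite'' in the proof of \Cref{thm:classifiable}), whereas you correctly observe that Kleppner's condition is compatible with $\Gamma$ finite --- e.g.\ $(\Z/n\Z)^2$ with a nondegenerate cocycle, for which $C^*(\Gamma,\sigma)\cong M_n(\C)$ --- and you dispose of that branch by hand: a full matrix algebra has stable rank one, and Cuntz subequivalence of positive elements in $M_n(\C)$ (and in matrix amplifications of it) is governed by rank, hence by the unique trace, so strict comparison holds trivially there. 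Your case distinction is thus a genuine refinement that makes the argument airtight where the paper's terse proof leaves the degenerate case implicit; the conclusion of \Cref{fg-nil4} remains true in that case for exactly the reason you give.
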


Note that a combination of \Cref{fg-nil3} and \Cref{fg-nil4} directly provides \Cref{thm:intro_fnd}. Lastly, we mention how to obtain \Cref{thm:classifiable} from these results.

\begin{proof}[Proof of \Cref{thm:classifiable}]
As shown by Tikuisis, White and Winter, cf.~\cite[Corollary D]{TWW}, the class $\mathcal{C}$ of all separable, unital, simple, infinite-dimensional $C^*$-algebras with finite nuclear dimension and which satisfy the UCT is classified by the Elliott invariant. Now Theorem \ref{fg-nil3} gives that $C_r^*(\Gamma, \sigma)$ belongs to $\mathcal{C}$  whenever $\Gamma$ and $\sigma$ satisfy the assumptions of this theorem (and $\Gamma$ is infinite).
\end{proof}

\section{Lattice orbits of nilpotent Lie groups}\label{sec:lattice_orbits}
Let $(\pi, \Hpi)$ be a projective representation of a nilpotent Lie group $G$ on $\Hip$. For a lattice $\Gamma \leq G$ and a vector $\vect \in \Hip$, we consider the system of vectors
\begin{align} \label{eq:lattice_orbit}
 \pi(\Gamma) \vect = (\pi(\gamma) \vect )_{ \gamma \in \Gamma }. 
\end{align}
A system \eqref{eq:lattice_orbit} will be treated as a $\Gamma$-indexed family, possibly with repetitions.

In this section the results obtained in the previous sections are applied to the restriction $\pi|_{\Gamma}$ of  $\pi$ to $\Gamma$ and an explicitly constructed associated Hilbert $C^*$-module.

\subsection{Relative discrete series  and projective representations} \label{sec:relativeDS}
Let $N$ be a connected, simply connected nilpotent Lie group 
and let $(\pi, \Hip)$ be an irreducible unitary representation of $N$. Denote by
$
P_{\pi} = \big\{ x \in N : \pi (x) \in \mathbb{C} \cdot I_{\Hip} \big\}
$
the projective kernel of $\pi$. Then $P_{\pi} \leq N$ forms a connected, simply connected normal subgroup. Assume that $(\pi, \Hip)$ is square-integrable modulo $P_{\pi}$, i.e.,
there exist $\veco, \vect \in \Hpi \setminus \{0\}$ such that
\begin{align} \label{eq:SIP}
\int_{N/\pker} | \langle \veco, \pi(x) \vect \rangle |^2 \; d\mu_{N/\pker} (x \pker) < \infty. 
\end{align}
Since $\pi (x) = \chi (x) I_{\Hpi}$ for a character $\chi \in \widehat{\pker}$, the integrand $N \ni x \mapsto | \langle \veco, \pi(x) \vect \rangle | \in [0,\infty)$ 
in \eqref{eq:SIP} defines a function on $N / \pker$. 

The orthogonality relations for $(\pi, \Hpi)$ yields that there exists a unique $d_{\pi} > 0$ such that
\begin{align} \label{eq:ortho_P}
\int_{N/\pker} | \langle \veco, \pi(x) \vect \rangle |^2 \; d\mu_{N/\pker} (x \pker) = d_{\pi}^{-1} \| \veco \|_{\Hpi}^2 \| \vect \|_{\Hpi}^2, \quad \veco, \vect \in \Hpi;
\end{align}
see, e.g.,~\cite{moore1974square, pedersen1994matrix, corwin1990representations}. An irreducible representation $\pi$
that is square-integrable modulo $\pker$ is called a \emph{relative discrete series representations}; this will be denoted by $\pi \in \SIP$. 

In particular, if $\pi$ is irreducible and square-integrable modulo the center $Z$ of $N$, then $\pi \in \SIP$, and $\pker = Z$ by \cite[Theorem 3.2.3]{corwin1990representations} and \cite[Corollary 4.5.4]{corwin1990representations}. 

A representation $\pi \in \SIP$ can be treated as a 
square-integrable projective representation of $N/\pker$: 
Given a smooth cross-section $s : N/\pker \to N$ for the quotient map $p : N \to N/\pker$, the mapping
\begin{align} \label{eq:projective}
\pi' : N/\pker \to \mathcal{U}(\Hpi), \quad x\pker \mapsto \pi(s(xP_{\pi}))
\end{align}
defines an irreducible projective unitary representation of $N/\pker$. The assumption on $\pi$ yields that $(\pi', \Hpi)$ is square-integrable on $N/\pker$ in the strict sense, i.e., $\langle \vect_1, \pi'(\cdot) \vect_2 \rangle \in L^2 (N/\pker)$ for all $\vect_1, \vect_2 \in \Hpi$. The constant $d_{\pi} > 0$ in \eqref{eq:ortho} coincides with the formal dimension $d_{\pi'}$ of $\pi'$ normalized according to Haar measure on $N/\pker$ (see Section \ref{sub:projective}). 
A different choice of cross-section yields equivalent projective unitary representations (cf.~\cite{Ani06} for details).

A square-integrable projective representation $\pi'$ obtained via a cross-section as in \eqref{eq:projective} will be referred to as a \emph{
projective relative discrete series representation} of the connected, simply connected nilpotent Lie group $G = N/\pker$. 
For simplicity, it will often also be written $\pi = \pi'$.

\subsection*{Notation} Throughout, unless stated otherwise, any nilpotent Lie group $G$ is assumed to be connected and simply connected. The Lie algebra of $G$ is denoted by $\mathfrak{g}$ and its dimension by $d$. The associated exponential map is denoted by $\exp_G : \mathfrak{g} \to G$ and forms a global diffeomorphism. The Schwartz space on $G$ consists of all $F : G \to \mathbb{C}$ such that $F \circ \exp_G \in \mathcal{S}(\mathfrak{g})$.

\subsection{Smooth vectors and matrix coefficients} 
Let $(\pi, \Hpi)$ be an irreducible unitary representation of a nilpotent Lie group $N$.
The space of smooth vectors $\Hs$ consists of all vectors $g \in \Hpi$ such that the orbit maps $N \ni x \mapsto \pi(x) g \in \Hpi$ are smooth. The Lie algebra $\mathfrak{n}$ acts on $\Hs$ via the derived representation 
\[
d\pi(X) g = \frac{d}{dt} \bigg|_{t = 0} \pi(\exp_N (tX)) g, \quad X \in \mathfrak{n}, \; g \in \Hs.
\]
For a basis $\{X_1, ..., X_d\}$ for $\mathfrak{n}$, a family of semi-norms in $\Hs$ is defined by
\[
\| g \|_{\beta} := \| d\pi(X^{\beta}) g \|_{\Hpi} = \| d\pi(X_1^{\beta_1}) \cdots d\pi(X_d^{\beta_d}) g \|_{\Hpi}, \quad \beta \in \mathbb{N}_0^d.
\]
The space $\Hs$ is $\pi$-invariant and is norm dense in $\Hpi$, see, e.g., \cite[Appendix A.1]{corwin1990representations}.

For smooth vectors, the associated matrix coefficients of a square-integrable representation define Schwartz functions, see, e.g., \cite{howe1977on, pedersen1994matrix, corwin1996lp, corwin1990representations} for different versions. For our purposes, the following version is most convenient, cf.~\cite[Theorem 2.6]{pedersen1994matrix} and \cite[Remark 2.2.8]{pedersen1994matrix}. 

\begin{lemma}[\cite{pedersen1994matrix}] \label{lem:smooth_matrix}
Let $\pi \in \SIP$.
If $\veco, \vect \in \Hs$, then $C_{\vect} \veco = \langle \veco, \pi' (\cdot) \vect \rangle \in \mathcal{S} (N /\pker)$. 
\end{lemma}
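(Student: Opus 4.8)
The plan is to work entirely in exponential coordinates and to reduce the Schwartz condition to uniform bounds on polynomially weighted invariant derivatives of the matrix coefficient. Recall that $C_{\vect}\veco \in \mathcal{S}(N/\pker)$ means $C_{\vect}\veco \circ \exp_G \in \mathcal{S}(\mathfrak{g})$ for $G = N/\pker$. For nilpotent Lie groups it is standard that the partial derivatives in exponential coordinates and the invariant differential operators coming from $\mathfrak{U}(\mathfrak{n})$ differ only by polynomial coefficients; hence $F \in \mathcal{S}(G)$ if and only if $\sup_{x \in G}|Q(x)\,(uF)(x)| < \infty$ for every polynomial $Q$ on $G$ and every $u \in \mathfrak{U}(\mathfrak{n})$ acting as a (say, left-)invariant differential operator. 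So it suffices to establish such sup-bounds for $F = C_{\vect}\veco$.

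First I would dispose of smoothness and of the operators $u$. Since $\veco, \vect \in \Hs$, the orbit maps $x \mapsto \pi(x)\vect$ are smooth, so $C_{\vect}\veco$ is a smooth function on $N$; as $\pi$ restricts to a character on $\pker$, it descends (through the cross-section defining $\pi'$) to a smooth function on $G$. Moreover, differentiating under the inner product, every invariant derivative $u\,C_{\vect}\veco$ is again a matrix coefficient $C_{\vect'}\veco'$, where $\veco', \vect'$ are obtained from $\veco, \vect$ by applying elements of $\mathfrak{U}(\mathfrak{n})$ through $d\pi$; since $\Hs$ is invariant under the derived representation, $\veco', \vect' \in \Hs$. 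Thus the problem reduces to proving that for every polynomial $Q$ and all $\veco, \vect \in \Hs$ one has $\sup_{x}|Q(x)\,C_{\vect}\veco(x)| < \infty$.

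The heart of the argument is to convert multiplication by a coordinate polynomial on $G$ into an operation on the vectors. Here I would invoke the Kirillov realization of $\pi$ on some $L^2(\R^k)$ with $2k = \dim(N/\pker)$, in which $\Hs$ is exactly $\mathcal{S}(\R^k)$ and $\pi(x)$ acts by $\pi(x)h(s) = e^{iP(x,s)}h(Q(x,s))$ with $P, Q$ polynomial, as recorded in the introduction. Writing $C_{\vect}\veco(x)$ as the resulting oscillatory integral, one converts each coordinate factor of $x$ either into a multiplication operator (using that $Q(x,\cdot)$ is a polynomial diffeomorphism of $\R^k$ for each $x$) or, after integration by parts against the polynomial phase $P(x,s)$, into a differential operator applied to $\veco$ or $\vect$. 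Iterating, multiplication by $Q(x)$ becomes a finite sum $\sum_\ell C_{\vect_\ell}\veco_\ell$ with $\veco_\ell, \vect_\ell \in \mathcal{S}(\R^k) = \Hs$. Cauchy--Schwarz then yields $|Q(x)\,C_{\vect}\veco(x)| \le \sum_\ell \|\veco_\ell\|_{\Hpi}\|\vect_\ell\|_{\Hpi}$, a bound independent of $x$, which is exactly what we need.

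The main obstacle is this last step in the general (non-Heisenberg) case: for an arbitrary representation square-integrable modulo $\pker$ the polynomials $P$ and $Q$ can be complicated, and the clean identity ``multiplication by a coordinate $=$ finite sum of matrix coefficients of smooth vectors'' must be extracted from the non-degeneracy guaranteed by square-integrability (flatness of the generic coadjoint orbit), rather than from the explicit Heisenberg formulas. One way to organise this rigorously is to use a flat polarizing subalgebra and the induced-representation picture, so that the ``momentum'' directions $d\pi(X)$ together with the central character recover polynomial multiplication in the orbit coordinates; this is precisely the content of the cited results of Pedersen, on which we rely.
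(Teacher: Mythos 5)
Your reduction steps are fine as far as they go, but the proposal has a genuine gap at the decisive point, and it also glosses the part that is the actual content of the paper's proof. The analytic core of your argument --- converting multiplication by a coordinate polynomial $Q(x)$ into operations on the vectors via the Kirillov realization $\pi(x)h(s) = e^{iP(x,s)}h(Q(x,s))$ and integration by parts against the phase --- is never carried out; you explicitly defer it to ``the cited results of Pedersen.'' But that step \emph{is} the theorem: the non-degeneracy needed to make the integration by parts produce only Schwartz-vector data is exactly what square-integrability modulo $\pker$ (flatness of the coadjoint orbit) provides, and extracting it is the substance of \cite[Theorem 2.6]{pedersen1994matrix}. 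Once you allow yourself to invoke Pedersen at that strength, the oscillatory-integral detour is redundant: his Theorem 2.6 already states that for $\veco,\vect\in\Hs$ the map $\mathfrak{n}_e \ni X \mapsto \langle \veco, \pi(\exp_N X)\vect\rangle$ is Schwartz on a linear complement $\mathfrak{n}_e$ of $\mathfrak{p} = \operatorname{Lie}(\pker)$, which is the whole estimate you are trying to produce. This is precisely how the paper proceeds: it cites that theorem and then does only the transfer to the quotient --- it fixes the cross-section $s(\exp_N(X)\pker) = \exp_N(X)$ for $X \in \mathfrak{n}_e$, checks that $\langle \veco, \pi'(\exp_{N/\pker}(X+\mathfrak{p}))\vect\rangle = \langle \veco, \pi(\exp_N X)\vect\rangle$ under $\mathfrak{n}/\mathfrak{p} \cong \mathfrak{n}_e$, and concludes $C_{\vect}\veco \in \mathcal{S}(N/\pker)$.

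That transfer is the step you dismiss with ``it descends \ldots\ to a smooth function on $G$,'' and as stated this is not quite right: the matrix coefficient of $\pi$ on $N$ does \emph{not} descend to $G = N/\pker$ (it is $\pker$-invariant only up to the central character); one gets a function on $G$ only after composing with a specific cross-section, and the identification of $\mathcal{S}(N/\pker)$ with $\mathcal{S}(\mathfrak{n}_e)$ uses that this cross-section is the exponential of a linear complement. A related inaccuracy: on $G$ the representation $\pi'$ is only projective, so your claim that every invariant derivative $u\,C_{\vect}\veco$ ``is again a matrix coefficient $C_{\vect'}\veco'$'' fails literally --- differentiating through the cross-section produces extra terms from the derivative of the cocycle, i.e.\ polynomial-in-$x$ multiples of matrix coefficients (already visible for the Heisenberg cocycle $e^{-2\pi i x\cdot\omega'}$). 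These extra polynomial factors are harmless for the Schwartz estimate, but the clean identity only holds upstairs on $N$ for the genuine representation, differentiated along $\mathfrak{n}_e$-directions --- which is the setting in which Pedersen's theorem is formulated and the reason the paper works there rather than directly with $\pi'$.
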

\begin{proof} The result follows from the general theorem \cite[Theorem 2.6]{pedersen1994matrix} in the following manner.

Let $\mathfrak{p} \subseteq \mathfrak{n}$ denote the Lie algebra of $\pker$. As in \cite[Remark 2.2.8]{pedersen1994matrix}, let $\mathfrak{n}_e \subset \mathfrak{n}$ be a subspace of even dimension, so that the orthogonal decomposition $\mathfrak{n} = \mathfrak{n}_e \oplus \mathfrak{p}$  gives rise to the diffeomorphism $\phi : \mathfrak{n}_e \to N/ P_{\pi}, \; X \mapsto \exp_N(X) \pker$ (cf.~\cite[Section 1.2]{corwin1990representations}), with inverse  $\phi^{-1} : N/\pker \to \mathfrak{n}_e$ given by $ \exp_N(X) \pker \mapsto X$. Hence, a (smooth) cross-section $s : N/\pker \to N$ for $p : N \to N/\pker$ is given by $s(\exp_N (X) \pker) = \exp_N(X)$, i.e., $p \circ s = \id_{N/\pker}$. For this cross-section, denote by $\pi'$ the projective representation of $N/\pker$ as defined in \eqref{eq:projective}.  

If $f, g \in \Hs$, then \cite[Theorem 2.6]{pedersen1994matrix} yields that 
$\mathfrak{n}_e \ni X \mapsto \langle f, \pi (\exp_N(X)) g \rangle \in \mathbb{C} $
is in $\mathcal{S} (\mathfrak{n}_e)$. 
A direct calculation using $\exp_{N/\pker} (X + \mathfrak{p}) = p (\exp_{N} (X))$ and the definition of $\pi'$ shows
\[ \langle \veco , \pi'(\exp_{N/\pker} (X + \mathfrak{p})) \vect \rangle 
= \langle \veco, \pi' (\exp_N (X) \pker) \vect \rangle 
= \langle \veco, \pi (\exp_N (X) ) \vect \rangle, \quad X \in \mathfrak{n}.
\]
Therefore, $X + \mathfrak{p} \mapsto \langle \veco , \pi'(\exp_{N/\pker} (X + \mathfrak{p})) \vect \rangle$ defines a Schwartz function on $\mathfrak{n} / \mathfrak{p} \cong \mathfrak{n}_e$, i.e., $\langle f, \pi'(\cdot) g \rangle \in \mathcal{S} (N/\pker)$.
\end{proof}

Lemma \ref{lem:smooth_matrix} allows to prove a convenient characterization of the space $\Hs$. For this and other purposes, a family of semi-norms on $\mathcal{S} (N / \pker)$ defined via polynomial weights and left-invariant differential operators will be used, cf.~\cite{schweitzer1993dense, hulanicki1984functional, ludwig1988minimal} for more details on what follows.

Let $U$ be fixed a symmetric, compact generating set for $G = N/\pker$ and define the length function $\tau : G \to [0,\infty)$ by
\[
\tau (x) = \min \{ n \in \mathbb{N}_0 \; | \; x \in U^n \},
\]
with $U^0 := \{e\}$.
Then $\tau(xy) \leq \tau(x) + \tau(y)$, $\tau(x^{-1}) = \tau(x)$ and $\tau(e) = 0$ for $x,y \in G$. Given $\alpha \in \mathbb{N}_0$, let $w_{\alpha} : G \to [1,\infty)$ be defined as $x \mapsto (1+\tau(x))^{\alpha}$ 
and define $L^p_{w_{\alpha}} (G)$ to be the collection of all $F \in L^p (G)$ such that $\| F \|_{L^p_{w_{\alpha}}} := \| w_{\alpha} \cdot F \|_{L^p} < \infty$. 

Let $\mathfrak{D}(G)$ be the (unital) algebra of all left-invariant differential operators on $G$, i.e., all linear operators $D : C^{\infty} (G) \to C^{\infty} (G)$ of the form $D = \sum_{\beta \in \mathbb{N}_0^d} c_{\beta} X^{\beta}$, where all but finitely many $c_{\beta} \in \mathbb{C}$ are zero and $X^{\beta} = X^{\beta_1}_1 \cdots X^{\beta_d}_d$ for a basis $\{X_1, ..., X_d\}$ for $\mathfrak{g}$.

Let $p \in [1,\infty]$. Then a function $F \in C^{\infty} (G)$ belongs to $\mathcal{S}(G)$ if, and only if, for all $D \in \mathfrak{D} (G)$ and $\alpha \in \mathbb{N}_0$,
\begin{align} \label{eq:schwartz_seminorm}
\| F \|_{D,\alpha, p} := \big\|  D F \big\|_{L^p_{w_{\alpha}}} < \infty.
\end{align}
The space $\mathcal{S}(G)$ is independent of the choice of the neighborhood $U$ and the exponent $p$, cf.
\cite{schweitzer1993dense, hulanicki1984functional, ludwig1988minimal}.

\begin{proposition} \label{prop:smooth_vectors}
Let $\pi \in \SIP$. For $\vect \in \Hs \setminus \{0\}$ and $\alpha \in \mathbb{N}_0$, let
\[\Hik := \bigg\{ \veco \in \Hpi \; : \; \| C_g f \|_{L^1_{w_{\alpha}}} = \int_{G} |C_g f (x)| w_{\alpha} (x) \; d\mu_{G} (x) < \infty \bigg\}. \]
Then
$
\Hs = \bigcap_{\alpha \in \mathbb{N}_0} \Hik.
$
\end{proposition}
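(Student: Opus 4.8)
The statement is an equality of two spaces, so the plan is to prove the two inclusions $\Hs \subseteq \bigcap_\alpha \Hik$ and $\bigcap_\alpha \Hik \subseteq \Hs$ separately. The first is immediate from the machinery already in place. If $f \in \Hs$, then since $g \in \Hs$ as well, \Cref{lem:smooth_matrix} gives $C_g f = \langle f, \pi'(\cdot) g \rangle \in \mathcal{S}(G)$. Taking $D = \id$ and $p = 1$ in the Schwartz seminorm characterization \eqref{eq:schwartz_seminorm} yields $\| C_g f \|_{L^1_{w_\alpha}} = \| C_g f \|_{\id,\alpha,1} < \infty$ for every $\alpha \in \mathbb{N}_0$, so $f \in \Hik$ for all $\alpha$. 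This direction needs no further work.

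For the reverse inclusion, fix $f \in \bigcap_\alpha \Hik$; in particular $C_g f \in L^1(G) = L^1_{w_0}(G)$. First I would record the reproducing formula coming from the orthogonality relations \eqref{eq:ortho}: testing against an arbitrary $h \in \Hpi$ and using $\overline{\langle g,g\rangle} = \|g\|_{\Hpi}^2$, one obtains
\[ f = \frac{d_\pi}{\|g\|_{\Hpi}^2} \int_G C_g f(x)\, \pi'(x) g \; d\mu_G(x), \]
a norm-convergent Bochner integral (the integrand has norm $|C_g f(x)|\,\|g\|_{\Hpi}$ and $C_g f \in L^1(G)$). It then remains to show that the right-hand side is a smooth vector; equivalently, that $f$ lies in the domain of $d\pi'(X^\beta)$ for every multi-index $\beta$, which is the standard characterization of $\Hs$.

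The heart of the argument is to differentiate under the integral sign. Using $\exp_G(sX)\,x = x\exp_G(s\,\Ad(x^{-1})X)$ together with the cocycle relation, one computes
\[ d\pi'(X)\big[\pi'(x) g\big] = c_X(x)\, \pi'(x) g + \pi'(x)\, d\pi'\big(\Ad(x^{-1})X\big) g, \]
where $c_X$ is a smooth scalar function stemming from the smooth cocycle $\sigma$ and the polynomial group law of $G$. Since $G$ is nilpotent, $\Ad(x^{-1})X = \sum_j p_j(x) X_j$ has polynomial coefficients, and polynomials on $G$ are dominated by powers of $1 + \tau(x)$; as $g \in \Hs$, this produces a bound $\| d\pi'(X)\pi'(x) g \|_{\Hpi} \leq C(1+\tau(x))^k$ with $C,k$ depending on $X$ and finitely many seminorms of $g$. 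Uniformly for small $s$, the difference quotients $s^{-1}\big(\pi'(\exp_G(sX)) - I\big)\pi'(x) g$ obey the same polynomial bound, which is $|C_g f|$-integrable precisely because $C_g f \in L^1_{w_k}(G)$. Dominated convergence for Bochner integrals then legitimizes passing $d\pi'(X)$ inside the integral, so $f$ lies in the domain of $d\pi'(X)$ and an explicit integral formula for $d\pi'(X) f$ results.

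Finally I would iterate this to reach all higher derivatives: applying the operators $d\pi'(X_i)$ successively produces integrands of the form $C_g f(x)\,\pi'(x)\,[\text{polynomial-coefficient combination of } d\pi'(X^{\beta'})g]$, whose norms are bounded by $|C_g f(x)|(1+\tau(x))^{k_\beta}$ for a suitable exponent $k_\beta$, and integrability holds since $C_g f \in L^1_{w_{k_\beta}}(G)$. An induction on $|\beta|$ then shows every $d\pi'(X^\beta) f$ exists, whence $f \in \Hs$. The hard part is exactly this quantitative step: establishing the polynomial growth bound for $\| d\pi'(X^\beta)\pi'(x) g \|_{\Hpi}$ (using nilpotency to control $\Ad$, smoothness of $\sigma$, and the comparison of polynomials with powers of the length function $\tau$) and combining it with membership in all $\Hik$ to justify differentiation under the integral. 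The remaining tracking of which weight $w_{k_\beta}$ is needed at each stage is routine bookkeeping.
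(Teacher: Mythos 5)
Your proposal is correct and takes essentially the same route as the paper: the easy inclusion via \Cref{lem:smooth_matrix} and the seminorms \eqref{eq:schwartz_seminorm} with $p=1$, and the converse via the reproducing formula from the orthogonality relations, differentiation under the Bochner integral, a commutation identity of the form $d\pi(X^{\beta})\pi(x)g = \pi(x)\sum_{|\beta'|\leq|\beta|} p_{\beta'}(x)\, d\pi(X^{\beta'})g$ with polynomial coefficients coming from nilpotency, domination of polynomials by powers of $1+\tau(\cdot)$, and integrability supplied by membership in every $\Hik$. The only cosmetic difference is that the paper pulls the computation back to the genuine representation $\pi$ of $N$ via the cross-section and the diffeomorphism $\mathfrak{n}_e \cong N/\pker$ (following the proof of \cite[Lemma A.1.1]{corwin1990representations}), which avoids differentiating the cocycle, whereas you work directly with the projective representation $\pi'$ and absorb the cocycle derivative into the scalar term $c_X$; both devices yield the same polynomial bounds.
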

\begin{proof}
As in the proof of Lemma \ref{lem:smooth_matrix}, consider the orthogonal decomposition $\mathfrak{n} = \mathfrak{n}_e \oplus \mathfrak{p}$ and the diffeomorphism $\phi : \mathfrak{n}_e \to N/\pker, \; X \mapsto \exp_N (X) \pker$. Let $\pi'$ denote the projective representation of $G = N/P_{\pi}$ 
defined via the section $s : N/\pker \to N, \; s(\exp_N (X) \pker) = \exp_N (X)$ as in \eqref{eq:projective}. 

If $\veco \in \Hs$, then $C_{\vect} \veco \in \mathcal{S} (G)$ by Lemma \ref{lem:smooth_matrix}. In particular, using the semi-norms \eqref{eq:schwartz_seminorm} with $p = 1$, yields directly that $C_{\vect} \veco \in L^1_{w_{\alpha}} (G)$ for all $\alpha \in \mathbb{N}_0$. 
For the converse, let $f \in \bigcap_{\alpha \in \mathbb{N}_0} \Hik$ and let $\vect \in \Hs \setminus \{0\}$ be normalized 
such that  
\begin{align} \label{eq:repro_form}
f = \int_{G} \langle f, \pi' (x) g \rangle \pi'(x) g \; d\mu_G (x) = \int_{N/\pker} \langle f, \pi (s(xP_{\pi})) g \rangle \pi(s(xP_{\pi})) g \; d\mu_{N/\pker} (x\pker);
\end{align}
 cf.~the orthogonality relations ~\eqref{eq:ortho} and \eqref{eq:ortho_P}. By 
\cite[Theorem 1.2.10]{corwin1990representations}, the map $\phi : \mathfrak{n}_e \to N/\pker$ transforms the Lebesgue measure $dY$ on $\mathfrak{n}_e$ to Haar measure $\mu_G$ on $G = N/\pker$. Therefore, the reproducing formula \eqref{eq:repro_form} and the change-of-variables formula yields
\[
f = \int_{\mathfrak{n}_e} \langle f, \pi(\exp_N (Y)) g \rangle \pi(\exp_N (Y)) g \; dY.
\]
Given a basis $\{X_1, ..., X_d \}$ of $\mathfrak{n}$ and a multi-index $\beta \in \mathbb{N}_0^d$, a direct calculation entails then that
$
d\pi(X^{\beta}) f = \int_{\mathfrak{n}_e} \langle f, \pi (\exp_N (Y)) g \rangle d\pi(X^{\beta}) \pi (\exp_N(Y)) g \; dY.  
$ Since $g \in \Hs$, it follows as in the proof of \cite[Lemma A.1.1]{corwin1990representations} that
\[
d\pi(X^{\beta}) \pi (\exp_N (Y)) g = \pi(\exp_N(Y)) \sum_{|\beta'| \leq |\beta|} p_{\beta'} (\exp_N (Y)) d\pi(X^{\beta'}) g,
\]
where $p_{\beta'}$ are polynomial functions on $N$, i.e., $p_{\beta'} \circ \exp_N$ is a polynomial on $\mathfrak{n}$. Combining these identities with norm estimates for vector-valued integrals (see, e.g., \cite[Theorem A.22]{folland2016course}) gives
\begin{align*} 
&\| d\pi(X^{\beta}) f \|_{\Hpi} \\
&\quad \quad \quad \quad \leq  \sum_{|\beta'| \leq |\beta|} \| d\pi (X^{\beta'}) g \|_{\Hpi} \int_{\mathfrak{n}_e} | \langle f, \pi (\exp_N (Y)) g \rangle | |p_{\beta'} (\exp_N (Y)) | \; dY \\
& \quad \quad \quad \quad =  \sum_{|\beta'| \leq |\beta|} \| d\pi (X^{\beta'}) g \|_{\Hpi}  \int_{N/\pker} | \langle f, \pi (s(xP_{\pi})) g \rangle | |p_{\beta'} (s(xP_{\pi})) | \; d\mu_{N/\pker} (x\pker). \numberthis \label{eq:smooth_seminorm_finite}
\end{align*}
By the assumption $g \in \Hs$, we have $C_1 := \max_{|\beta'| \leq |\beta|} \| d\pi(X^{\beta'}) g\|_{\Hpi}  < \infty$. 
Since $p_{\beta'} \circ \exp_N$ is a polynomial on $\mathfrak{n}_e$, it follows by the identity $p_{\beta'} (\exp_N (Y)) = p_{\beta'} (s(\exp_{N/\pker} (Y + \mathfrak{p})))$ that $ p_{\beta'} \circ s$ is a polynomial function on $G = N/\pker$. 
By \cite[Section 1.5]{ludwig1988minimal} or \cite[Section 3.5]{ludwig1995algebre}, 
any polynomial function $p$ on $N/\pker$ is comparable to the polynomial weight $w = 1+\tau(\cdot)$ in the sense that there exist $C_2, M > 0$ (depending on $p$) such that $p(x) \leq C_2 w(x)^M$ for $x \in G$.
Choosing $\alpha > 0$ sufficiently large, it follows therefore easily from this and  \eqref{eq:smooth_seminorm_finite} that there exists $C > 0$ such that
\begin{align} 
\| d\pi(X^{\beta}) f \|_{\Hpi} \leq C  \int_{G} | \langle f, \pi (s(x)) g \rangle | w_{\alpha} (x) \; d\mu_{G} (x) < \infty.
\end{align}
 Since $\beta \in \mathbb{N}_0^d$ was chosen arbitrary, it follows that $f \in \Hs$.
\end{proof}

The arguments used in the proof of \Cref{prop:smooth_vectors} are reminiscent of some arguments used in
\cite[Section 11.2]{groechenig2001foundations} and \cite[Section 2.2]{beltita2011modulation}, which require different assumptions than used here.

\subsection{Mapping properties for smooth vectors}
Henceforth, $\pi = \pi'$ will denote a 
projective relative discrete series representation of $G = N / \pker$ 
and $\Gamma \leq G$ will denote a lattice.

The Schwartz space $\mathcal{S} (\Gamma)$ on the discrete subgroup $\Gamma \leq G$ 
is defined by
\[
\mathcal{S}(\Gamma) = \bigg\{ c \in \mathbb{C}^{\Gamma} : \sum_{\gamma \in \Gamma} |c_{\gamma} | w_{\alpha} (\gamma) < \infty, \; \forall \alpha \in \mathbb{N}_0 \bigg\}, 
\]
and equipped with the semi-norms $\| c \|_{\alpha} := \| c \|_{\ell^1_{w_{\alpha}}} = \| w_{\alpha} \cdot c \|_{\ell^1}$ for $\alpha \in \mathbb{N}_0$; see \cite{ji1992smooth, jolissaint1990rapidly, schweitzer1993dense}.

The following result shows that the action of the analysis (resp.\ synthesis) operator associated to smooth vectors is well-defined into (resp.\ on) the space $\mathcal{S}(\Gamma)$.

\begin{proposition}\label{prop:admissible_nilpotent}
Let $(\pi, \Hpi)$ be a projective relative discrete series representation of a nilpotent Lie group $G$. 
Suppose that $\Gamma \leq G$ is a lattice. 
Then the following assertions hold:
\begin{enumerate}[(i)]
    \item For all $\veco, \vect \in \Hs$, the mapping 
    \[ \Gamma \ni \gamma \mapsto \langle \veco, \pi(\gamma) \vect \rangle \in \mathbb{C} \]
    defines an element of $\mathcal{S}(\Gamma)$.
    \item For $(c_{\gamma})_{\gamma \in \Gamma} \in \mathcal{S} (\Gamma)$ and $\vect \in \Hs$, 
    the series 
    \[ \sum_{\gamma \in \Gamma} c_{\gamma} \pi(\gamma) \vect \]
    defines an element of $\Hs$.
\end{enumerate}
Thus $(\mathcal{S}(\Gamma), \Hs)$ forms an admissible pair for $\pi|_{\Gamma}$ in the sense of \Cref{def:admissible}.
\end{proposition}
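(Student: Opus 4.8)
The plan is to establish assertions (i) and (ii), which are precisely conditions (2) and (1) of \Cref{def:admissible}, and then to dispatch the remaining hypotheses of that definition (density of $\Hs$ and the $*$-algebra structure of $\mathcal{S}(\Gamma)$). Two elementary weight estimates will be used repeatedly: submultiplicativity $w_\alpha(xy) \le w_\alpha(x)\,w_\alpha(y)$, which follows from $\tau(xy) \le \tau(x)+\tau(y)$ together with $(1+a+b)^\alpha \le (1+a)^\alpha(1+b)^\alpha$, and the symmetry $w_\alpha(\gamma^{-1}) = w_\alpha(\gamma)$, coming from $\tau(\gamma^{-1}) = \tau(\gamma)$.

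For (i), I would start from \Cref{lem:smooth_matrix}: as $f,g \in \Hs$, the matrix coefficient $C_g f$ lies in $\mathcal{S}(G)$, so $\|X^\beta(C_g f)\|_{L^1_{w_\alpha}} < \infty$ for all $\beta$ and all $\alpha$. The task is then to pass from this weighted $L^1$-control to weighted summability of the sampled sequence $(C_g f(\gamma))_{\gamma \in \Gamma}$. For this I would invoke a local Sobolev (sampling) estimate: fixing a relatively compact neighbourhood $V$ of $e$ and a Sobolev embedding $W^{k,1}(V) \hookrightarrow C(V)$ with $k \ge d$, and using left-invariance of the $X_i$, one obtains a constant $C>0$ with \[ |F(\gamma)| \le C \sum_{|\beta|\le k} \int_{\gamma V} |X^\beta F|\, d\mu_G \] for every smooth $F$ and every $\gamma$. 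Multiplying by $w_\alpha(\gamma)$, noting $w_\alpha(\gamma) \asymp w_\alpha(x)$ for $x \in \gamma V$ (since $\tau(\gamma)$ and $\tau(x)$ differ by at most $\sup_{v\in V}\tau(v)$), and summing over $\Gamma$ using the bounded overlap of $\{\gamma V\}_{\gamma\in\Gamma}$ (a consequence of the discreteness of $\Gamma$), I expect $\sum_\gamma |C_g f(\gamma)|\,w_\alpha(\gamma)$ to be dominated by $C' \sum_{|\beta|\le k}\|X^\beta(C_g f)\|_{L^1_{w_\alpha}} < \infty$. Since $\alpha$ is arbitrary, this gives $(C_g f(\gamma))_\gamma \in \mathcal{S}(\Gamma)$.

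For (ii), the series $h := \sum_\gamma c_\gamma \pi(\gamma)g$ converges absolutely in $\Hpi$ because $\mathcal{S}(\Gamma) \subseteq \ell^1(\Gamma)$ and each $\pi(\gamma)$ is unitary. To check $h \in \Hs$ I would use the characterization $\Hs = \bigcap_\alpha \Hik$ of \Cref{prop:smooth_vectors} with reference vector $g$ (assuming $g \ne 0$, the case $g=0$ being trivial), reducing the claim to $C_g h \in L^1_{w_\alpha}(G)$ for all $\alpha$. The intertwining relation \eqref{eq:matrix_coefficient_intertwining} yields $\langle \pi(\gamma)g, \pi(x)g\rangle = \sigma(\gamma,\gamma^{-1}x)\,(C_g g)(\gamma^{-1}x)$, whence, as $|\sigma|\equiv 1$, \[ |C_g h(x)| \le \sum_\gamma |c_\gamma|\,\big|(C_g g)(\gamma^{-1}x)\big|. \] Integrating against $w_\alpha$, substituting $y=\gamma^{-1}x$ (using unimodularity of $G$) and applying submultiplicativity should factor the bound as \[ \|C_g h\|_{L^1_{w_\alpha}} \le \Big(\sum_\gamma |c_\gamma|\,w_\alpha(\gamma)\Big)\,\|C_g g\|_{L^1_{w_\alpha}} = \|c\|_\alpha\,\|C_g g\|_{L^1_{w_\alpha}}, \] which is finite since $c \in \mathcal{S}(\Gamma)$ and $C_g g \in \mathcal{S}(G)$ by \Cref{lem:smooth_matrix}; hence $h \in \Hs$.

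Finally, to conclude admissibility it remains to recall that $\Hs$ is dense in $\Hpi$, and to verify that $\mathcal{S}(\Gamma)$ is a $*$-subalgebra of $\ell^1(\Gamma,\sigma)$ containing $C_c(\Gamma,\sigma)$; the latter follows from the same two weight estimates, since $|a *_\sigma b| \le |a| * |b|$ and $|a^*(\gamma)| = |a(\gamma^{-1})|$ give $\|a *_\sigma b\|_\alpha \le \|a\|_\alpha\|b\|_\alpha$ and $\|a^*\|_\alpha = \|a\|_\alpha$. I expect the only real obstacle to be the sampling step in (i), where the geometry of $\Gamma$ inside $G$ enters through the local Sobolev estimate and the bounded overlap of lattice translates; every remaining step reduces to submultiplicativity of the weights together with \Cref{lem:smooth_matrix} and \Cref{prop:smooth_vectors}.
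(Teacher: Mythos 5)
Your proof is correct, but the analytic core runs along a genuinely different track than the paper's. For (i), the paper uses the $p=\infty$ seminorms of $\mathcal{S}(G)$ to get pointwise polynomial decay $|C_\vect\veco(x)| \leq C_{\alpha'}(1+\tau(x))^{-\alpha'}$, verifies from this the Wiener-amalgam condition \eqref{eq:amalgam}, and then quotes the coorbit sampling lemma \cite[Lemma 3.8]{feichtinger1989banach}; you instead use the $p=1$ seminorms (weighted $L^1$ control of $X^\beta C_\vect\veco$) together with a local Sobolev sampling estimate, comparability of $w_\alpha$ on the translates $\gamma V$, and bounded overlap of $\{\gamma V\}_{\gamma\in\Gamma}$. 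All three ingredients you need are sound: the estimate $|F(\gamma)| \leq C\sum_{|\beta|\leq k}\int_{\gamma V}|X^\beta F|\,d\mu_G$ holds with a constant uniform in $\gamma$ by left-invariance, since on a relatively compact chart the operators $X^\beta$, $|\beta|\leq k$, dominate the coordinate derivatives with bounded coefficients and $W^{k,1}\hookrightarrow C$ locally for $k \geq \dim G$; bounded overlap follows from discreteness of $\Gamma$ (so $\sup_x|\Gamma\cap xV^{-1}|<\infty$); and $w_\alpha(\gamma)\asymp w_\alpha(x)$ on $\gamma V$ since $\tau$ is bounded on compacta. For (ii), the paper cites the synthesis bound $\ell^1_{w_\alpha}(\Gamma)\to\Hik$ from \cite[Proposition 5.2]{feichtinger1989banach} or \cite[Theorem 6.1]{christensen1996atomic}, whereas your direct weighted-convolution estimate $\|C_\vect h\|_{L^1_{w_\alpha}} \leq \|c\|_\alpha\,\|C_\vect\vect\|_{L^1_{w_\alpha}}$ is precisely the proof of that bound specialized to the case where the analyzing window equals the synthesis atom; your use of the same $\vect$ as reference vector in \Cref{prop:smooth_vectors} is legitimate since that characterization holds for any nonzero $\vect\in\Hs$ (and $\vect=0$ is trivial). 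Both routes rest on \Cref{lem:smooth_matrix} and \Cref{prop:smooth_vectors}, and your bookkeeping for the $*$-algebra property of $\mathcal{S}(\Gamma)$ and density of $\Hs$ matches the paper's. What each approach buys: yours is self-contained and avoids the coorbit literature at the modest cost of the Sobolev-embedding justification, while the paper's amalgam condition \eqref{eq:amalgam} does double duty, since it is re-used verbatim later (in \Cref{prop:multiwindow} and, implicitly, \Cref{lem:stability}) to invoke the sampling and atomic-decomposition machinery of \cite{feichtinger1989banach, christensen1996atomic, groechenig1991describing}.
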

\begin{proof}
The space $L^1_{w_{\alpha}} (G)$ is invariant under translations $L_x F := F(x^{-1} \cdot)$ and $R_x F := F(\cdot x)$ for $x \in G$, with 
 $\|L_x \|_{\mathcal{B}(L^1_{w_{\alpha}})}, \| R_x \|_{\mathcal{B}(L^1_{w_{\alpha}})} \leq w_{\alpha} (x)$, see, e.g., \cite[Proposition 3.7.6]{reiter2000classical}. In particular, the weight $w_{\alpha} : G \to [1,\infty)$ is a control weight for $L^1_{w_{\alpha}}(G)$ in the sense of \cite[Section 3]{feichtinger1989banach}. Throughout, let $\Hik$ be as defined in  \Cref{prop:smooth_vectors}, see \cite{feichtinger1989banach, christensen1996atomic} for basic properties.

(i)  Let $\alpha \in \mathbb{N}_0$. If $\veco, \vect \in \Hs$, then $C_{\vect} \veco \in \mathcal{S}(G)$ by Lemma \ref{lem:smooth_matrix}. In particular, this implies, by using the semi-norms \eqref{eq:schwartz_seminorm} with $p = \infty$, that, for all $\alpha' \in \mathbb{N}_0$, there exists $C_{\alpha'} > 0$ such that $|C_{\vect} \veco (x)| \leq C_{\alpha'} (1 + \tau(x) )^{-\alpha'}$ for $x \in G$. Since $(1 + \tau(\cdot))^{-\alpha'} \in L^1_{w_{\alpha}} (G)$ for a sufficiently large $\alpha' \geq \alpha$ (cf.~\cite[Proposition 1.5.1]{schweitzer1993dense}), the submultiplicativity and local boundedness of  $w = (1+\tau(\cdot))$ yields that 
\begin{align*} 
\int_G \sup_{y \in V} |C_{\vect} \veco (x y)| w_{\alpha} (x) \; d\mu_G (x) &\leq C_{\alpha'}
\int_G \sup_{y \in V} (1+\tau(xy))^{-\alpha'} w_{\alpha} (x) \; d\mu_G (x) 
\\
&\leq C_{\alpha'} \sup_{y \in V} (1 + \tau(y^{-1}))^{\alpha'} \int_G (1+\tau(x))^{-\alpha'} w_{\alpha} (x) \; d\mu_G (x)  \numberthis \label{eq:amalgam} \\
&< \infty
\end{align*}
for any relatively compact unit neighborhood $V \subset G$.
The property \eqref{eq:amalgam} allows an application of \cite[Lemma 3.8]{feichtinger1989banach}, which yields that $ (\langle \veco, \pi (\gamma) \vect \rangle )_{\gamma \in \Gamma}\in \ell^1_{w_{\alpha}} (\Gamma)$. Since $\alpha \in \mathbb{N}_0$ was chosen arbitrary, it follows that $(\langle \veco, \pi (\gamma) \vect \rangle )_{\gamma \in \Gamma} \in \bigcap_{\alpha \in \mathbb{N}_0} \ell^1_{w_{\alpha}} (\Gamma) = \mathcal{S} (\Gamma)$.

(ii) If $(c_{\gamma} )_{\gamma \in \Gamma} \in \mathcal{S}(\Gamma)$ and $\vect \in \Hs$, 
then $(c_{\gamma} )_{\gamma \in \Gamma} \in \ell^1_{w_{\alpha}} (\Gamma)$ and $\vect$ satisfies \eqref{eq:amalgam} with the choice $\vect = \veco$ for all $\alpha \in \mathbb{N}_0$. Hence, by  \cite[Proposition 5.2]{feichtinger1989banach} or \cite[Theorem 6.1]{christensen1996atomic}, the mapping $(c_{\gamma})_{\gamma \in \Gamma} \mapsto \sum_{\gamma \in \Gamma} c_{\gamma} \pi(\gamma) \vect$ is bounded from $\ell^1_{w_{\alpha}} (\Gamma)$ into $\Hik$ for $\alpha \in \mathbb{N}_0$. This shows $\sum_{\gamma \in \Gamma} c_{\gamma} \pi(\gamma) \vect \in \bigcap_{\alpha \in \mathbb{N}_0} \Hik = \Hs$ by \Cref{prop:smooth_vectors}.

For 
the admissibility of the pair $(\mathcal{S}(\Gamma), \Hs)$ for $\pi|_{\Gamma}$,
it is obvious that $C_c(\Gamma, \sigma)$ is contained in $\mathcal{S}(\Gamma)$, so 
it remains only to show that  $\mathcal{S}(\Gamma)$ is a 
$*$-subalgebra of $\ell^1 (\Gamma, \sigma)$.
It is straightforward to see that $\mathcal{S}(\Gamma)$ is closed under twisted involution. For the algebra property,  note that $\mathcal{S}(\Gamma) = \bigcap_{\alpha \in \mathbb{N}_0} \ell^1_{w_{\alpha}} (\Gamma)$ and that each $\ell^1_{w_{\alpha}} (\Gamma)$, where $\alpha \in \mathbb{N}_0$, is an ordinary convolution algebra. Since $|c \ast_{\sigma} d| \leq |c| \ast |d|$ for $c, d \in \ell^1 (\Gamma)$, it follows readily that $\mathcal{S}(\Gamma)$ is also closed under twisted convolution.
\end{proof}

\subsection{Finitely generated modules associated to lattices}
This section is devoted to the construction of a Hilbert $C^*$-module from $\Hs$. The following observation will guarantee that this module is finitely generated.

\begin{proposition}\label{prop:multiwindow}
Let $(\pi, \Hpi)$ be a projective relative discrete series representation of a nilpotent Lie group $G$. Suppose that $\Gamma \leq G$ is a lattice. Then there exists a finite family $(\vect_j)_{j = 1}^n$ of vectors $\vect_j \in \Hs$ such that $(\pi(\gamma) \vect_j)_{\gamma \in \Gamma, 1 \leq j \leq n}$ is a frame for $\Hip$.
\end{proposition}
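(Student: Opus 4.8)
The plan is to extract the finitely many windows from a single good smooth vector by means of a covering argument that exploits the cocompactness of $\Gamma$, and then to invoke the classical sampling theory of Feichtinger and Gröchenig. First I would fix any nonzero $\vect \in \Hs$. By \Cref{lem:smooth_matrix} its diagonal matrix coefficient $C_\vect\vect$ belongs to $\mathcal{S}(G)$, and the estimate \eqref{eq:amalgam} from the proof of \Cref{prop:admissible_nilpotent} shows that $C_\vect\vect$ satisfies the amalgam-type integrability (relative to the control weight $w_\alpha$) that makes $\vect$ an admissible window for the atomic decomposition and sampling machinery of \cite{feichtinger1989banach, groechenig1991describing}.

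Second, I would recall that the sampling theorem for such windows (see \cite{groechenig1991describing}, and \cite[Proposition 5.2]{feichtinger1989banach}) furnishes a relatively compact unit neighbourhood $V \subseteq G$, depending only on $\vect$, such that $(\pi(x)\vect)_{x \in X}$ is a frame for $\Hpi$ for every relatively separated, $V$-dense set $X \subseteq G$ (i.e.\ with $G = \bigcup_{x \in X} xV$). The purpose of allowing finitely many windows is precisely to upgrade the fixed --- and possibly coarse --- lattice $\Gamma$ to a set of this covering density. Since $\Gamma$ is a lattice in the connected, simply connected nilpotent Lie group $G$, it is cocompact \cite{raghunathan1972discrete, corwin1990representations}, so $G = \Gamma K$ for some compact $K \subseteq G$. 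By compactness of $K$, finitely many points $x_1, \ldots, x_n \in G$ satisfy $K \subseteq \bigcup_{j=1}^n x_j V$, whence
\[
G = \Gamma K \subseteq \bigcup_{\gamma \in \Gamma} \bigcup_{j=1}^n \gamma x_j V .
\]
Thus $X := \{\gamma x_j : \gamma \in \Gamma, \, 1 \leq j \leq n\}$ is $V$-dense, and it is relatively separated as a finite union of translates of the lattice $\Gamma$. The sampling theorem then yields that $(\pi(\gamma x_j)\vect)_{\gamma \in \Gamma, 1 \leq j \leq n}$ is a frame for $\Hpi$.

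Third, I would pass back to the orbit form required in the statement. Setting $\vect_j := \pi(x_j)\vect$, the $\pi$-invariance of $\Hs$ gives $\vect_j \in \Hs$, while the projective relation $\pi(\gamma)\pi(x_j) = \sigma(\gamma, x_j)\pi(\gamma x_j)$ shows that $\pi(\gamma)\vect_j$ and $\pi(\gamma x_j)\vect$ differ only by the unimodular factor $\sigma(\gamma, x_j)$. As the frame inequalities \eqref{eq:frame_ineq} involve only the moduli $|\langle \veco, \pi(\gamma)\vect_j\rangle| = |\langle \veco, \pi(\gamma x_j)\vect\rangle|$, the family $(\pi(\gamma)\vect_j)_{\gamma \in \Gamma, 1 \leq j \leq n}$ is again a frame for $\Hpi$, which is the assertion.

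The step I expect to require the most care is the first: confirming that $\vect$ is a legitimate window for these sampling theorems, whose standard formulations are phrased for genuine rather than projective representations. Concretely, one must check that the oscillation of $x \mapsto \pi(x)\vect$ over $V$ can be made uniformly small and that $C_\vect\vect$ lies in the appropriate weighted amalgam space --- both consequences of the Schwartz decay from \Cref{lem:smooth_matrix} together with the translation bounds $\|R_x\|_{\mathcal{B}(L^1_{w_\alpha})} \leq w_\alpha(x)$ used in \Cref{prop:admissible_nilpotent}. The cocycle $\sigma$ enters only through unimodular scalars and therefore leaves every relevant norm estimate unchanged, so the projective case reduces to the scalar one without additional difficulty.
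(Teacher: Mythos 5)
Your proof is correct and follows essentially the same route as the paper's: fix a single nonzero $\vect \in \Hs$, invoke the sampling results of \cite{groechenig1991describing, christensen1996atomic, feichtinger1989banach} to obtain a covering-density criterion, use cocompactness of $\Gamma$ to produce finitely many translates $x_1,\ldots,x_n$ so that $\{\gamma x_j\}$ is sufficiently dense and relatively separated, and set $\vect_j := \pi(x_j)\vect$. The only (inessential) differences are that the paper covers a relatively compact fundamental domain rather than a compact set $K$ with $G = \Gamma K$, and that it leaves implicit the unimodularity of the cocycle factor $\sigma(\gamma,x_j)$, which you spell out.
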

\begin{proof}
Let $\vect \in  \Hs \setminus \{0\}$, so that $C_g g$ satifies the property \eqref{eq:amalgam} with $g = f$. Then, by \cite[Theorem 6.4]{christensen1996atomic} or \cite[Section 4]{groechenig1991describing}, there exists a compact unit neighborhood $U \subset G$ such that for any discrete family $\Lambda$ in $G$ satisfying $G = \bigcup_{\lambda \in \Lambda} \lambda U$ and $\sup_{x \in G} | \Lambda \cap xU | < \infty$, 
\[ 
\| f \|_{\Hip}^2 \asymp \sum_{\lambda \in \Lambda} | \langle f, \pi (\lambda) \vect \rangle |^2, \quad f \in \Hip.
\]
Since $G$ is a nilpotent Lie group, $\Gamma \leq G$ is also co-compact, see, e.g., \cite[Corollary 5.4.6]{corwin1990representations}. Hence, there exists a relatively compact fundamental domain $\Sigma \subset G$ for $\Gamma$. Let $(x_j U)_{j = 1}^n$ be a finite cover of $\Sigma$. Then $\Lambda' := \{\gamma x_j : \gamma \in \Gamma, j = 1, \ldots ,n\}$ satisfies 
$G = \bigcup_{\lambda' \in \Lambda'} \lambda' U$ and $\sup_{x \in G} |\Lambda' \cap xU| < \infty$, 
so that
\[ 
\| f \|_{\Hip}^2 \asymp \sum_{\lambda \in \Lambda'} | \langle f, \pi (\lambda') \vect \rangle |^2 
= \sum_{j = 1}^n \sum_{\gamma \in \Gamma} | \langle f, \pi (\gamma) \pi(x_j) \vect \rangle |^2, \quad f \in \Hip.
\]
Therefore, defining $\vect_j := \pi(x_j) \vect \in \Hs$ for $j = 1, \ldots , n$, gives the desired result.
\end{proof}

The existence of localized multi-window Gabor frames was proven in \cite[Theorem 4.6]{Lu09} via a correspondence to projective modules over non-commutative tori. The proof of Proposition \ref{prop:multiwindow} shows that this is also a direct consequence of the classical sampling techniques \cite{feichtinger1989banach, christensen1996atomic}. 

\begin{theorem}\label{thm:trace}
Let $(\pi, \Hpi)$ be a $\sigma$-projective relative discrete series representation of a nilpotent Lie group $G$ of formal dimension $d_{\pi} > 0$. Suppose that $\Gamma \leq G$ is a lattice.
Then 
$(\mathcal{S}(\Gamma), \Hip^\infty)$ is  an admissible pair for  $\pi|_{\Gamma}$ in the sense of \Cref{def:admissible},
so that $\Hip^\infty$ can be completed into a Hilbert $C_r^*(\Gamma,\sigma)$-module $\E$. The module $\E$ is finitely generated, and if $\tau$ denotes the canonical tracial state on  $C_r^*(\Gamma,\sigma)$, then
\[ \tau(\E) =  \vol(G/\Gamma) d_{\pi}.\]
$($The constant $\vol(G/\Gamma) d_{\pi}$ is independent of the choice of Haar measure on $G$.$)$
\end{theorem}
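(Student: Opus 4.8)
The plan is to assemble this result from the pieces already established in the excerpt. First I would verify that $(\mathcal{S}(\Gamma), \Hip^\infty)$ is an admissible pair for $\pi|_\Gamma$: this is precisely the content of \Cref{prop:admissible_nilpotent}, which shows that the analysis map $\gamma \mapsto \langle f, \pi(\gamma) g\rangle$ lands in $\mathcal{S}(\Gamma)$ for $f,g \in \Hs$, and that the synthesis series $\sum_\gamma c_\gamma \pi(\gamma) g$ defines an element of $\Hs$ for $c \in \mathcal{S}(\Gamma)$, while $\mathcal{S}(\Gamma)$ is a $*$-subalgebra of $\ell^1(\Gamma,\sigma)$ containing $C_c(\Gamma,\sigma)$. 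By \Cref{prop:admissible_module}, admissibility immediately gives that $\Hip^\infty$ completes to a Hilbert $C_r^*(\Gamma,\sigma)$-module $\E$.

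Next I would establish that $\E$ is finitely generated. By \Cref{prop:multiwindow} there exist finitely many $g_1,\ldots,g_n \in \Hs$ such that $(\pi(\gamma) g_j)_{\gamma \in \Gamma, 1 \leq j \leq n}$ is a frame for $\Hip$. Applying \Cref{prop:generating_multiwindow}(i), this frame property is equivalent to $\{g_1,\ldots,g_n\}$ being an algebraic generating set for $\E$, so $\E$ is finitely generated in the required sense.

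It remains to compute $\tau(\E)$, which I expect to be the main point. The idea is to use \Cref{prop:trace_dimension}, valid since $\tau$ is a faithful tracial state on $C_r^*(\Gamma,\sigma)$, giving $\tau(\E) = \dim_{(M,\tau)} \Hi_\E^\tau$ where $M = \vN(\Gamma,\sigma)$. By the discussion following \Cref{prop:generating_multiwindow}, the localization $\Hi_\E^\tau$ is naturally identified with $\Hip$, carrying the Hilbert $M$-module structure coming from $\pi|_\Gamma$ extended to $\vN(\Gamma,\sigma)$. Thus the task reduces to computing the von Neumann dimension $\dim_{(M,\tau)}\Hip$ of $\Hip$ as a module over $\vN(\Gamma,\sigma)$. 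The hard part will be evaluating this dimension explicitly. I would proceed by selecting a single $g \in \Hs$ with $\|g\|=1$ and forming the rank-one module projection; concretely, $d_\pi^{1/2} C_g$ realizes $\pi$ as a subrepresentation of $\lambda_G^\sigma$ (cf.\ \Cref{sub:projective}), and restricting the coefficient map to $\Gamma$ via a fundamental domain of measure $\vol(G/\Gamma)$ lets one express the trace of the associated projection. The orthogonality relations \eqref{eq:ortho_P}, together with Weil's integral formula and the fact that the formal dimension $d_\pi$ measures the ``size'' of $\Hip$ inside $L^2(G)$, should yield $\dim_{(M,\tau)}\Hip = \vol(G/\Gamma) d_\pi$. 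This matches the known von Neumann dimension computation underlying the density theorem (Theorem \ref{thm:density_intro}), and I would expect to cite the standard coupling-constant / $\Gamma$-dimension formula for the restriction of a square-integrable representation to a lattice, as in the approach of Bekka \cite{bekka2004square}.

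Finally, the parenthetical independence of $\vol(G/\Gamma) d_\pi$ from the choice of Haar measure on $G$ follows because rescaling $\mu_G$ scales $\vol(G/\Gamma)$ and $d_\pi^{-1}$ by the same factor (by \eqref{eq:ortho_P}), so their product is invariant; this is already noted in Theorem \ref{thm:density_intro}.
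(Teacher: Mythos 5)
Your proposal is correct and follows essentially the same route as the paper: admissibility via \Cref{prop:admissible_nilpotent}, finite generation via \Cref{prop:multiwindow} combined with \Cref{prop:generating_multiwindow}, and the trace computation via \Cref{prop:trace_dimension} after identifying the localization $\Hi_\E^\tau$ with $\Hip$ as a Hilbert $\vN(\Gamma,\sigma)$-module. The only difference is cosmetic: where you sketch the coupling-constant computation of $\dim_{(\vN(\Gamma,\sigma),\tau)}\Hip$ by hand (and note you would ultimately cite the standard formula in the spirit of \cite{bekka2004square}), the paper simply invokes \cite[Theorem 4.3]{En21}, which is exactly that computation in the projective setting.
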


\begin{proof}
Admissibility of the pair $(\mathcal{S}(\Gamma), \Hip^\infty)$ was proved in \Cref{prop:admissible_nilpotent}. Combining \Cref{prop:multiwindow} with \Cref{prop:generating_multiwindow}, we get that $\E$ is finitely generated.

By the discussion preceding Proposition \ref{prop:generating_multiwindow}, the localization space 
$\Hi_\E^\tau$ of $\E$ with respect to $\tau$ can be naturally identified with $\Hip$, 
and the representation $\pi_\E^\tau$
of $C_r^*(\Gamma,\sigma)$ on $\Hi_\E^\tau$ 
induces a representation $\pi_r$ of $C_r^*(\Gamma,\sigma)$ on $\Hip$.
By \Cref{proposition:extend_von_neumann}, this representation can be extended  to give $\Hip$ the structure of a Hilbert $\vN(\Gamma,\sigma)$-module, where the action is determined by 
$\lambda_\Gamma^\sigma(\gamma) \cdot \veco = \pi(\gamma) \veco$
for $\gamma \in \Gamma$ and $\veco \in \Hip$. The dimension $\dim_{(\vN(\Gamma,\sigma),\tau)} \Hip$ of this Hilbert $\vN(\Gamma,\sigma)$-module was computed in \cite{En21} to be $\vol(G/\Gamma) d_{\pi}$, see \cite[Theorem 4.3]{En21}. Therefore, by \Cref{prop:trace_dimension}, it follows that
$ \tau(\E) = \dim_{(\vN(\Gamma,\sigma),\tau)} \Hip =  \vol(G/\Gamma) d_{\pi}$,
as required.
\end{proof}

\begin{proof}[Proof of Theorem \ref{thm:module_intro}]
The statement of Theorem \ref{thm:module_intro} follows directly from \Cref{thm:trace} 
combined with the fact that $\pi \in \SIP$ and $\pker = Z$, cf.~Section  \ref{sec:relativeDS}.
\end{proof}

\begin{proof}[Proof of Theorem \ref{thm:intro_module_frame}]
Theorem \ref{thm:intro_module_frame} follows by applying \Cref{prop:generating_multiwindow} to the module of \Cref{thm:trace}.
\end{proof}

\subsection{Existence of smooth frames and Riesz sequences}
The following theorem is the main result of this paper.

\begin{theorem}\label{thm:existence}
Let $(\pi, \Hpi)$ be a $\sigma$-projective relative discrete series representation of a nilpotent Lie group $G$. 
Suppose $\Gamma \leq G$ is a lattice such that $(\Gamma,\sigma)$ satisfies Kleppner's condition. Then the following assertions hold:
\begin{enumerate}[(i)]
    \item If $ \vol(G/\Gamma) d_{\pi} < 1$, then there exists $\vect \in \Hip^\infty$ such that $\pi(\Gamma) \vect$ is a frame for $\Hip$.
    \item If $ \vol(G/\Gamma) d_{\pi} > 1$, then there exists $\vect \in \Hip^\infty$ such that $\pi(\Gamma) \vect$ is a Riesz sequence in $\Hip$.
\end{enumerate}
\end{theorem}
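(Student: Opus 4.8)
The plan is to reduce both parts to the single-generator case ($n=1$) of Proposition \ref{prop:trace_generators}, applied to the Hilbert module produced by Theorem \ref{thm:trace}. First I would collect the relevant structural facts about the coefficient algebra $A := C_r^*(\Gamma,\sigma)$. As a lattice in a connected, simply connected nilpotent Lie group, $\Gamma$ is finitely generated and nilpotent; hence Kleppner's condition together with Theorem \ref{fg-nil3} yields that $A$ is simple and admits a \emph{unique} tracial state, namely the canonical trace $\tau$, while Corollary \ref{fg-nil4} (i.e.\ Theorem \ref{thm:intro_fnd}) provides that $A$ has strict comparison of projections in the extended sense of \Cref{subsec:strict_comp_pos}. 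By Theorem \ref{thm:trace}, the space $\Hip^\infty$ completes to a finitely generated Hilbert $A$-module $\E$ with $\tau(\E) = \vol(G/\Gamma) d_{\pi}$. Because $\tau$ is the only tracial state on $A$, the hypotheses $\vol(G/\Gamma)d_{\pi} < 1$ and $\vol(G/\Gamma)d_{\pi} > 1$ translate, respectively, into the conditions $\tau(\E) < 1$ and $\tau(\E) > 1$ holding for \emph{all} tracial states, which is exactly the input required by Proposition \ref{prop:trace_generators}.

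For part (i) I would apply Proposition \ref{prop:trace_generators}(i) with $n=1$ to obtain a generating set $\{\vect\}$ for $\E$ consisting of a single vector. The final clause of Proposition \ref{prop:generating_isometry} permits replacing $\vect$ by an element of the dense subspace $\Hip^\infty$ while preserving the generating property, so I may take $\vect \in \Hip^\infty$. Proposition \ref{prop:generating_multiwindow}(i) (equivalently Theorem \ref{thm:intro_module_frame}(i)) with $n=1$ then identifies ``$\{\vect\}$ generates $\E$'' with ``$\pi(\Gamma)\vect$ is a frame for $\Hip$'', which gives the desired smooth window. Part (ii) runs in exact parallel: Proposition \ref{prop:trace_generators}(ii) furnishes a single-element, $A$-linearly independent set with closed $A$-span, Proposition \ref{prop:generating_isometry} places this vector in $\Hip^\infty$, and Proposition \ref{prop:generating_multiwindow}(ii) translates the module statement into $\pi(\Gamma)\vect$ being a Riesz sequence in $\Hip$.

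The substantive content of the argument lives entirely in the results quoted above --- strict comparison of projections for $C_r^*(\Gamma,\sigma)$ (obtained via finite decomposition rank) and the construction of $\E$ with the explicit trace value $\tau(\E) = \vol(G/\Gamma)d_{\pi}$ --- so the theorem itself is a short synthesis. I do not anticipate a genuine obstacle at this stage; the only points deserving care are the passage from ``for all tracial states'' in Proposition \ref{prop:trace_generators} to the single numerical inequality on $\tau(\E)$, which is precisely what the uniqueness of the trace under Kleppner's condition supplies, and the verification that the single generator can be chosen smooth, which is handled by the density clause of Proposition \ref{prop:generating_isometry}.
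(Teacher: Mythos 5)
Your proposal is correct and follows essentially the same route as the paper's proof: Theorem \ref{thm:trace} gives the finitely generated module $\E$ with $\tau(\E) = \vol(G/\Gamma)d_\pi$, uniqueness of the trace under Kleppner's condition (via \cite{P}, as in Theorem \ref{fg-nil3}) converts the numerical hypothesis into the ``all tracial states'' input of Proposition \ref{prop:trace_generators}, strict comparison from Corollary \ref{fg-nil4} supplies the single generator (resp.\ single independent element), the density clause of Proposition \ref{prop:generating_isometry} moves it into $\Hip^\infty$, and Proposition \ref{prop:generating_multiwindow} translates back to frames and Riesz sequences. The only cosmetic difference is that the paper also explicitly cites \cite[Corollary 5.4.4]{corwin1990representations} for the finite generation of $\Gamma$, which you correctly assert but do not reference.
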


\begin{proof}
For the applicability of the results of Section \ref{sec:nilpotent}, we note that a discrete $\Gamma \leq G$ is finitely generated, see, e.g., \cite[Corollary 5.4.4]{corwin1990representations}.

(i) Suppose $\vol(G/\Gamma) d_{\pi} < 1$. By \Cref{thm:trace}, it follows that $\tau(\E) < 1$ for the canonical trace $\tau$ on $C_r^*(\Gamma,\sigma)$, which is the unique tracial state on $C_r^*(\Gamma,\sigma)$ by \cite{P}. Since $C_r^*(\Gamma,\sigma)$ has strict comparison of projections by \Cref{fg-nil4}, it follows from  \Cref{prop:trace_generators} that $\E$ admits a generating set with one element. By \Cref{prop:generating_isometry}, the generating element may be chosen to be $g \in \Hip^\infty$. Hence, $\pi(\Gamma) g$ is a frame for $\Hip$ by \Cref{prop:generating_multiwindow}.

(ii) Suppose $\vol(G/\Gamma) d_{\pi} > 1$. Just as in (i), we get $\tau(\E) > 1$ for the unique tracial state $\tau$ on $C_r^*(\Gamma,\sigma)$, so by strict comparison of projections and \Cref{prop:trace_generators}, $\E$ admits an $A$-linearly independent set $\{ \vect \}$ which is closed in $\E$. By \Cref{prop:generating_isometry}, $\vect$ can be chosen in $\Hs$, so $\pi(\Gamma) \vect$ is a Riesz sequence by \Cref{prop:generating_multiwindow}.
\end{proof}

\begin{proof}[Proof of Theorem \ref{thm:main_intro}]
The statement of Theorem \ref{thm:main_intro} follows directly from \Cref{thm:existence} 
combined with the fact that $\pi \in \SIP$ and $\pker = Z$, cf.~Section  \ref{sec:relativeDS}.
\end{proof}

\begin{remark}
\Cref{thm:existence} can be extended to multi-window and super systems, cf.~\cite{En21} for these notions. Under  Kleppner's condition, the inequality $\vol(G/\Gamma)d_{\pi} < n/d$ (resp.\ $\vol(G/\Gamma) d_{\pi} > n/d$) implies the existence of an $n$-multiwindow $d$-super frame (resp.\ Riesz sequence) in $\Hip^d$ with windows in $\Hip^\infty$.
\end{remark}

\subsection{Special classes of smooth vectors}
Theorem \ref{thm:existence} can also be used to prove the existence of frames and Riesz sequences generated by smooth vectors with additional qualities, such as G\aa rding vectors or analytic vectors for a representation $(\pi, \Hpi)$ of a Lie group $N$.

For $k \in C_c^{\infty} (N)$ and $g \in \Hpi$, a \emph{G\aa rding vector} is defined by
\begin{align} \label{eq:garding}
\pi(k) g = \int_N k(x) \pi(x) g \; d\mu_N (x).
\end{align}
The G\aa rding subspace $\Hpi^{\gamma} \subseteq \Hpi$ is the linear span of all vectors of the form \eqref{eq:garding}.  The space $\Hpi^{\gamma}$ is $\pi$-invariant and norm dense in $\Hpi$ and satisfies $\Hpi^{\gamma} \subseteq \Hs$, cf.~\cite[Appendix A]{corwin1990representations}. 

A vector $\vect \in \Hpi$ is called \emph{analytic} if the orbit map $x \mapsto \pi (x) \vect$ is real-analytic. The space of all analytic vectors is denoted by $\Hpi^{\omega}$ and is a $\pi$-invariant dense subspace of $\Hpi$, cf.~\cite{goodman1969analytic, nelson1959analytic}. 

The following modification result can be proved in a similar manner as \cite[Proposition 4.4]{grochenig2020balian} (cf.~also \cite[Proposition 1]{grochenig2013phase}). Its proof will be omitted here.

\begin{lemma} \label{lem:stability}
Let $\pi$ be an irreducible, square-integrable projective representation of a nilpotent Lie group $G$. For $g \in \Hs \setminus \{0\}$, let $\mathcal{H}_{\pi}^1 = \{ f \in \Hpi : C_{\vect} \veco \in L^1 (G) \}$ be equipped with the norm $\| \veco \|_{\Hpi^1} := \| C_{\vect} \veco \|_{L^1}$. 
Suppose that $\mathcal{V} \subset \Hpi^1$ is a norm dense subspace. Then the following assertions hold:
\begin{enumerate}[(i)]
    \item If $\pi (\Gamma) g$ is a frame, then there exists $\widetilde{g} \in \mathcal{V}$ such that $\pi (\Gamma) \widetilde{g}$ is a frame.
    \item If $\pi (\Gamma) g$ is a Riesz sequence, then there exists $\widetilde{g} \in \mathcal{V}$ such that $\pi (\Gamma) \widetilde{g}$ is a Riesz sequence.
\end{enumerate}
\end{lemma}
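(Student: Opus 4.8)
The plan is to prove \Cref{lem:stability} as a perturbation (stability) statement: both the frame and the Riesz property are invertibility conditions for certain operators attached to the generator, these operators depend continuously on the generator when it is measured in the $\Hpi^1$-norm, and $\mathcal V$ is dense in $\Hpi^1$; hence $\vect$ may be replaced by a nearby element of $\mathcal V$ without destroying the relevant property. First I would record that $\vect$ itself lies in $\Hpi^1$: since $\vect \in \Hs$, \Cref{lem:smooth_matrix} gives $C_\vect \vect \in \mathcal S(G) \subset L^1(G)$, so $\|\vect\|_{\Hpi^1} < \infty$. For a window $u \in \Hpi^1$ write $\analysis_u \colon \Hpi \to \ell^2(\Gamma)$, $\analysis_u h = (\langle h, \pi(\gamma) u\rangle)_{\gamma}$, for the analysis operator of $\pi(\Gamma)u$, with synthesis operator $\synthesis_u = \analysis_u^*$, frame operator $\frameop_u = \analysis_u^*\analysis_u \in \B(\Hpi)$, and Gramian $\gramian_u = \synthesis_u^*\synthesis_u \in \B(\ell^2(\Gamma))$. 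By \Cref{sec:frames}, $\pi(\Gamma)u$ is a frame (resp.\ a Riesz sequence) precisely when $\frameop_u$ (resp.\ $\gramian_u$) is invertible.

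The crucial ingredient, and the main obstacle, is the Bessel estimate
\[
\|\analysis_u\|_{\B(\Hpi, \ell^2(\Gamma))} \le C\,\|u\|_{\Hpi^1} \qquad (u \in \Hpi^1),
\]
with $C$ depending only on $\Gamma$ and the fixed window $\vect$. I would obtain it through the coorbit/amalgam machinery already used in \Cref{prop:admissible_nilpotent} and in \cite{feichtinger1989banach, christensen1996atomic, groechenig1991describing}. Concretely, the reproducing formula for the square-integrable representation $\pi$ yields a convolution domination of matrix coefficients, $|C_u h| \lesssim d_\pi\,\big(|C_\vect h| * |C_\vect u|^{\vee}\big)$; since $\vect \in \Hs$ one has $C_\vect \vect \in \mathcal S(G)$, and the reproducing identity expressing $C_\vect u$ as the ($\sigma$-twisted) convolution of $C_\vect u$ with $C_\vect \vect$ upgrades $C_\vect u \in L^1(G)$ to the Wiener amalgam space $W(L^\infty, L^1)(G)$ with norm controlled by $\|C_\vect u\|_{L^1} = \|u\|_{\Hpi^1}$, while $C_\vect h$ lands in $W(L^\infty, L^2)(G)$ with norm $\lesssim \|h\|_{\Hpi}$. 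The amalgam convolution relation then places the right-hand side in $W(L^\infty, L^2)(G)$, and sampling of amalgam functions over the relatively separated set $\Gamma$ gives $\|\analysis_u h\|_{\ell^2(\Gamma)} \lesssim \|h\|_{\Hpi}\,\|u\|_{\Hpi^1}$, as claimed.

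Granting this estimate, $\frameop_u$ and $\gramian_u$ depend Lipschitz-continuously on $u \in \Hpi^1$. Indeed $u \mapsto \analysis_u$ is linear, so $\analysis_u - \analysis_{u'} = \analysis_{u-u'}$, whence $\|\analysis_u - \analysis_{u'}\| \le C\|u - u'\|_{\Hpi^1}$ and likewise for $\synthesis_u = \analysis_u^*$. Consequently
\[
\|\frameop_u - \frameop_{u'}\| \le \|\analysis_{u-u'}\|\,\big(\|\analysis_u\| + \|\analysis_{u'}\|\big), \qquad \|\gramian_u - \gramian_{u'}\| \le \|\synthesis_{u-u'}\|\,\big(\|\synthesis_u\| + \|\synthesis_{u'}\|\big),
\]
both of which tend to $0$ as $\|u - u'\|_{\Hpi^1} \to 0$.

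Finally I would invoke openness of the set of invertible operators. For part (i), $\frameop_\vect$ is invertible in $\B(\Hpi)$ by hypothesis, so there is $\varepsilon > 0$ with $\frameop_{\widetilde\vect}$ invertible whenever $\|\widetilde\vect - \vect\|_{\Hpi^1} < \varepsilon$; by density of $\mathcal V$ in $\Hpi^1$ choose such a $\widetilde\vect \in \mathcal V$, and then $\pi(\Gamma)\widetilde\vect$ is a frame. Part (ii) is identical with $\gramian_\vect$ in place of $\frameop_\vect$: invertibility of the Gramian on $\ell^2(\Gamma)$ is preserved under sufficiently small $\Hpi^1$-perturbations, and a $\widetilde\vect \in \mathcal V$ close to $\vect$ yields a Riesz sequence $\pi(\Gamma)\widetilde\vect$. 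The only genuinely technical point is the Bessel estimate of the second paragraph; everything else is soft functional analysis, which is precisely why the argument can be made to mirror that of \cite[Proposition 4.4]{grochenig2020balian}.
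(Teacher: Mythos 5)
Your proposal is correct and is essentially the argument the paper has in mind: the paper omits the proof and refers to \cite[Proposition 4.4]{grochenig2020balian}, which is exactly this perturbation scheme — the coorbit/amalgam Bessel bound $\|\analysis_u\| \lesssim \|u\|_{\Hpi^1}$, Lipschitz dependence of the frame and Gramian operators on the window, and openness of the invertibles combined with density of $\mathcal{V}$. The only (harmless) slip is calling $u \mapsto \analysis_u$ linear when it is conjugate-linear; additivity is all that the identity $\analysis_u - \analysis_{u'} = \analysis_{u-u'}$ requires.
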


\begin{corollary} \label{cor:analytic}
Under the assumptions of \Cref{thm:existence}, the following hold:
\begin{enumerate}[(i)]
    \item If $\vol(G/\Gamma) d_{\pi} < 1$, there exists $\vect \in \Hip^{\omega}$ (resp.\ $ \vect \in \Hpi^{\gamma}$) such that $\pi(\Gamma) \vect$ is a frame.
    \item If $\vol(G/\Gamma) d_{\pi}  > 1$, there exists $\vect \in \Hip^{\omega}$ (resp.\ $\vect \in \Hpi^{\gamma}$) such that $\pi(\Gamma) \vect$ is a Riesz sequence.
\end{enumerate}
\end{corollary}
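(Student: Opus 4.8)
The plan is to derive \Cref{cor:analytic} by feeding the output of \Cref{thm:existence} into the modification result \Cref{lem:stability}, taking for the dense reservoir $\mathcal{V}$ either the analytic vectors $\Hpi^{\omega}$ or the G\aa rding vectors $\Hpi^{\gamma}$. First I would invoke \Cref{thm:existence} to obtain a smooth vector $g_0 \in \Hs \setminus \{0\}$ with $\pi(\Gamma) g_0$ a frame in case (i) and a Riesz sequence in case (ii). Fixing this $g_0$, I form the space $\mathcal{H}_{\pi}^1 = \{ f \in \Hpi : C_{g_0} f \in L^1(G) \}$ of \Cref{lem:stability}, normed by $\| f \|_{\mathcal{H}_{\pi}^1} = \| C_{g_0} f \|_{L^1}$. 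The routine first point is the inclusion $\Hpi^{\gamma}, \Hpi^{\omega} \subseteq \mathcal{H}_{\pi}^1$: an analytic vector is smooth and $\Hpi^{\gamma} \subseteq \Hs$, while for any $f \in \Hs$ the matrix coefficient $C_{g_0} f$ is a Schwartz function on $G$ by \Cref{lem:smooth_matrix}, hence lies in $L^1(G)$.

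The substance of the argument is the \emph{density} of $\Hpi^{\gamma}$ and $\Hpi^{\omega}$ in $\mathcal{H}_{\pi}^1$ for its $L^1$-coorbit norm. Given $f \in \mathcal{H}_{\pi}^1$, I would approximate it by the G\aa rding vectors $\pi(k_j) f$, where $(k_j)_{j} \subset C_c^{\infty}(N)$ is an approximate identity concentrating at the identity. Using $\pi(y) = \chi(p_y)\,\pi(s(\overline{y}))$ for $y \in N$ and the intertwining relation \eqref{eq:matrix_coefficient_intertwining}, the matrix coefficient $C_{g_0}(\pi(k_j) f)$ becomes the $\sigma$-twisted convolution $\overline{k}_j \ast_{\sigma} C_{g_0} f$ on $G = N/\pker$, where $\overline{k}_j$ is the push-forward of $k_j$ to the quotient integrated against the central character $\chi$. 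Since $\overline{k}_j$ is again an approximate identity on $G$ and $\sigma$ is continuous near the identity, one gets $\overline{k}_j \ast_{\sigma} C_{g_0} f \to C_{g_0} f$ in $L^1(G)$, that is, $\| \pi(k_j) f - f \|_{\mathcal{H}_{\pi}^1} \to 0$, proving $\Hpi^{\gamma}$ dense in $\mathcal{H}_{\pi}^1$.

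For $\Hpi^{\omega}$ I would run the identical computation with $(k_j)$ replaced by a heat-kernel family $(p_t)_{t>0} \subset \mathcal{S}(N)$ for the Laplacian $\sum_i d\pi(X_i)^2$: these form an approximate identity, the pushed-forward mollifiers $\overline{p}_t$ again converge to the Dirac mass on $G$, and $\pi(p_t) f$ is an analytic vector by Nelson's theorem, so the same twisted-convolution estimate gives $\| \pi(p_t) f - f \|_{\mathcal{H}_{\pi}^1} \to 0$ as $t \to 0$. With density in hand, I would apply \Cref{lem:stability} with $\mathcal{V} = \Hpi^{\omega}$ (resp.\ $\mathcal{V} = \Hpi^{\gamma}$): part (i) of the lemma converts the frame $\pi(\Gamma) g_0$ into a frame $\pi(\Gamma) \widetilde{g}$ with $\widetilde{g}$ in the chosen space, and part (ii) converts the Riesz sequence $\pi(\Gamma) g_0$ into a Riesz sequence $\pi(\Gamma) \widetilde{g}$, which is exactly the claim.

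I expect the main obstacle to be precisely the density of $\Hpi^{\omega}$ in the $L^1$-type norm of $\mathcal{H}_{\pi}^1$. Unlike the smooth and G\aa rding cases, analytic mollifiers cannot be taken compactly supported, so one is forced to use the heat semigroup and to verify carefully that the reduction of the $N$-convolution to a twisted convolution on $G = N/\pker$ behaves well, keeping track of the integration over $\pker$ and of the central character $\chi$; establishing that $\overline{p}_t$ is a genuine $L^1$-approximate identity on $G$ (and that $\pi(p_t)f$ is analytic) is the delicate part, whereas the subsequent application of \Cref{lem:stability} is formal.
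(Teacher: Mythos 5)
Your proposal is correct, and its overall scaffold (Theorem \ref{thm:existence} plus \Cref{lem:stability} applied with $\mathcal{V} = \Hpi^{\omega}$ or $\Hpi^{\gamma}$, after checking $\mathcal{V} \subseteq \Hpi^1$ via \Cref{lem:smooth_matrix}) matches the paper exactly; but your density argument is genuinely different. You mollify: $\pi(k_j)f$ with an approximate identity in $C_c^\infty(N)$ for the G\aa rding case, and $\pi(p_t)f$ with the heat semigroup plus Nelson's theorem for the analytic case, reducing in each case to a twisted-convolution estimate $\overline{k}_j \ast_\sigma C_{g_0}f \to C_{g_0}f$ in $L^1(G)$ after pushing the kernel down to $G = N/\pker$ against the central character. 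The paper instead proves density in one stroke by the atomic decomposition of the coorbit space $\Hpi^1$: fixing a single nonzero $h \in \Hpi^{\omega}$ (resp.\ $h \in \Hpi^{\gamma}$) --- which lies in $\Hs$, so its matrix coefficients are Schwartz and the decomposition of \cite{feichtinger1989banach, christensen1996atomic} applies --- every $f \in \Hpi^1$ expands as $f = \sum_i c_i \pi(x_i)h$ with $(c_i) \in \ell^1$, and the partial sums remain in the $\pi$-invariant subspace $\Hpi^{\omega}$ (resp.\ $\Hpi^{\gamma}$) and converge to $f$ in $\Hpi^1$. This sidesteps precisely what you flag as delicate: no heat kernels on $N$, no verification that the pushed-forward mollifiers $\overline{p}_t$ form an $L^1$-approximate identity on the quotient, no continuity analysis of $\sigma$ near the identity --- only the existence of one nonzero analytic vector (Nelson/Goodman, already cited) and machinery the paper has in play anyway. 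Your route is more constructive and self-contained (and does go through: the needed strong $L^1$-continuity of $\lambda_G^\sigma$ at $e$ holds since the cocycle comes from a smooth cross-section, and $p_t \in \mathcal{S}(N)$ with $\pi(p_t)f$ analytic is Nelson's construction), at the cost of those heat-kernel technicalities; one small slip is that $p_t$ is the heat kernel of the group Laplacian $\sum_i X_i^2$ on $N$, the operator $\sum_i d\pi(X_i)^2$ being the generator of the corresponding semigroup on $\Hpi$. Note also that for the G\aa rding case alone the paper's subsequent remark offers an even shorter path via Dixmier--Malliavin ($\Hs = \Hpi^{\gamma}$ for nilpotent Lie groups), making \Cref{thm:existence} directly sufficient there.
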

\begin{proof}
The result follows from Theorem \ref{thm:existence} and Lemma \ref{lem:stability} after showing that $\Hpi^{\omega}$ (resp.\ $\Hpi^{\gamma}$) is dense in $\Hpi^1$. To show the latter, let $h \in \Hpi^{\omega}$ (resp.\ $h \in \Hpi^{\gamma}$) be non-zero. By the atomic decomposition of $\Hpi^1$ (cf.~\cite{feichtinger1989banach,christensen1996atomic}), there exists a sequence $(x_i )_{i \in \mathbb{N}}$ in $G$ such that any $f \in \Hpi^1$ can be represented as a norm convergent series $f = \sum_{i \in I} c_i \pi(x_i) h$ for some $(c_i)_{i \in I} \in \ell^1 (I)$. If $f_n := \sum_{i = 1}^n c_i \pi(x_i) h$ for $n \in \mathbb{N}$, then $f_n \in \Hpi^{\omega}$ (resp.\ $f_n \in \Hpi^{\gamma}$), and $f_n \to f$ in $\Hpi^1$ as $n \to \infty$. This completes the proof.
\end{proof}

\begin{remark}
An alternative argument for the existence claims (i) and (ii) in Corollary \ref{cor:analytic} for the G\aa rding space $\Hpi^{\gamma}$ can be obtained 
via the Dixmier-Malliavin theorem \cite{dixmier1978factorisations}, which 
asserts that $\Hs = \Hpi^{\gamma}$ for a nilpotent Lie group. Then (i) and (ii) follow already from Theorem \ref{thm:existence}. 
\end{remark}

\section{Examples}\label{sec:examples}
In this section we discuss two examples that illustrate our main result.

\begin{example}[The Heisenberg group] \label{sec:heisenberg}

Let $N$ be the $2d+1$-dimensional Heisenberg group, i.e.,\ $N = \R^{d} \times \R^d \times \R$ with multiplication
\[ (x,\omega,s)(x',\omega',s') = (x+x',\omega+\omega',s+s' + x \cdot \omega') .\]
The center of $N$ is given by $Z = \{ 0 \} \times \{ 0 \} \times \R \cong \R$, hence the quotient $G = N/Z$ is isomorphic to the abelian group $\R^d \times \R^d$.

The Schrödinger representation of $N$ on $L^2(\R^d)$ is given by
\[ \pi(x,\omega,s) \veco(t) = e^{2\pi i s} e^{-2\pi i \omega t} \veco(t-x) .\]
The corresponding projective representation of $\R^{2d} \cong \R^d \times \R^d$ can be given by
\[ \pi(x,\omega) \veco(t) = e^{-2\pi i \omega t} \veco(t-x) \]
where the associated cocycle is given by $\sigma((x,\omega),(x',\omega')) = e^{-2\pi i x \cdot \omega'}$. A lattice orbit $\pi(\Gamma) \vect$ for $\Gamma$ a lattice in $\R^{2d}$ and $\vect \in L^2(\R^d)$ is in this context known as a \emph{Gabor system}.

A lattice $\Gamma$ in $\R^{2d}$ is of the form $\Gamma = M \Z^{2d}$ for some $M \in GL_{2n}(\R)$. Viewing instead $\Gamma$ as $\Z^{2d}$, the cocycle is given by $\sigma_{\Theta}(k,l) = e^{-2\pi i (\Theta k) \cdot l}$ for $k,l \in \Z^{2d}$, where $\Theta = M^t J M$ and $J$ denotes the standard symplectic $2n \times 2n$ matrix
\[ J = \begin{pmatrix} 0 & I_n \\ -I_n & 0 \end{pmatrix} . \]
With this notation, Kleppner's condition translates into the statement that whenever $k \in \Z^{2d}$ satisfies $e^{2\pi i (\Theta k) \cdot l} = 1$ for all $l \in \Z^{2d}$, then $k=0$. For a lattice of the form $\Gamma = \alpha \Z^{d} \times \beta \Z^d$ with $\alpha, \beta > 0$, this translates into the number $\alpha \beta$ being irrational. Kleppner's condition implies a weaker condition, namely that $\Theta$ contains at least one irrational entry. Let us call $\Gamma$ \emph{nonrational} when the latter condition holds. For nonrational lattices, Rieffel proved that the non-commutative tori $C^*(\Gamma,\sigma)$ have strict comparison of projections and cancellation \cite{Ri88} (see also \cite[Theorem 5.3.2]{Bla}). A consequence of this (cf.~\cite[Corollary 7.10]{Ri88}) was used in \cite[Theorem 5.4]{jakobsen2021duality}, combined with the link between Heisenberg modules over non-commutative tori and Gabor frames \cite{Lu09}, to prove the existence of Gabor frames $\pi(\Gamma)\vect$ with integrable vector $\vect \in \Hpi^1$ (hence, $\vect \in \mathcal{S} (\mathbb{R}^d)$ by Lemma \ref{lem:stability}) for nonrational lattices $\Gamma$ satisfying $\vol(\R^{2d}/\Gamma) < 1$. Therefore, in this setting, our main result is already covered by the result in \cite{jakobsen2021duality}.
\end{example}

The following example considers the group $G_{5,3}$ from Nielsen's catalogue \cite{Ni83}. This example is of interest to time-frequency analysis as it leads to so-called coorbit spaces \cite{feichtinger1989banach} that are different \cite[Example 3.3]{grochenig2021new} from the coorbit spaces associated to the Schrödinger representation defined in \Cref{sec:heisenberg}, so-called modulation spaces. In addition, we mention that group $C^*$-algebras associated with lattices in $G_{5,3}$ have been studied in \cite{MilWal2005}. 

\begin{example}[The group $G_{5,3}$]

Consider the group $G_{5,3}$ from \cite[p.\ 6]{Ni83}. This is a step 3 nilpotent Lie group with $\R^5$ as underlying manifold. The group operation is given by
\[ (x_1, \ldots ,x_5)(y_1, \ldots ,y_5) = (x_1 + y_1 + x_4 y_2 + x_5 y_3 + x_5^2 y_4 / 2, x_2 + y_2, x_3 + y_3 + x_5 y_4, x_4 + y_4 , x_5 + y_5) . \]
The center of $G_{5,3}$ is given by $\R \times \{ 0 \}^4$. An irreducible representation $(\pi,L^2(\R^2))$ of $G_{5,3}$ which is square-integrable modulo the center is given by
\[ \pi(x_1, \ldots, x_5) \veco(s,t) = e^{2\pi i( x_1 - x_2 x_4 + x_4 s - x_3 t + x_4 t^2/2)}\vect(s - x_2, t - x_5) .\]
The formal dimension of $\pi$ is $d_{\pi} = 1$. The quotient of $G_{5,3}$ by the center is isomorphic to $G \coloneqq \R \times N$, where $N$ denotes the $3$-dimensional Heisenberg group from the previous example, although the multiplication is in a different order:
\[ (x_1, x_2, x_3, x_4)(y_1, y_2, y_3, y_4) = (x_1 + y_1, x_2 + y_2 + x_4 y_3, x_3 + y_3, x_4 + y_4) .\]
Here we have relabeled the coordinates from $x_j$ to $x_{j-1}$ for $j=2,3,4,5$. The Haar measure $\mu_G$ on $G$ is just the 4-dimensional Lebesgue measure. The center $Z$ of $G$ is $\R^2 \times \{ 0 \}^2$. The corresponding projective representation of $\R \times N$ corresponding to $\pi$ (which we denote also by $\pi$) is given by
\[ \pi(x_1, \ldots, x_4) \veco(s,t) = e^{2\pi i (x_3 s - x_2 t + x_3 t^2/2)}\veco(s-x_1,t-x_4) .\]
The cocycle is given by
\[ \sigma((x_1,x_2,x_3,x_4),(y_1,y_2,y_3,y_4)) = \exp(2\pi i( - x_1 y_3 + x_4 y_2 + x_4^2 y_3/2)) .\]

Let $N' = N \cap \Z^3$ denote the discrete Heisenberg group, which is a cocompact lattice in the Heisenberg group. Hence the group $\Gamma = \Z \times N'$ is a lattice in $G$.
The conjugacy class of an element $(k_1,k_2,k_3,k_4) \in \Gamma$ is given by
\[ \{ (k_1,k_2 + p k_3 + q k_4,k_3,k_4 ) : p,q \in \Z \} .\]
From this we see that the elements with finite conjugacy class in $\Gamma$ are exactly elements of the center $Z \cap \Gamma = \Z^2 \times \{ 0 \}^2$, and these elements have singleton conjugacy classes.

Let us consider the dilation automorphisms $\delta_{\alpha,\beta}$ of $G$ ($\alpha,\beta > 0$) given by
\[ \delta_{\alpha,\beta}(x_1,x_2,x_3,x_4) = (\alpha x_1, \beta^2 x_2, \beta x_3, \beta x_4) .\]
 Applying these to $\Gamma \subseteq G$, we get a family of lattices $\Gamma_{\alpha, \beta}:=\delta_{\alpha,\beta}(\Gamma)$ in $G$
 which are isomorphic as discrete groups to $\Gamma$. 
We can compute the covolume of 
$\Gamma_{\alpha, \beta}$ as
\[
\vol(G/\Gamma_{\alpha, \beta})=
\mu_G(\delta_{\alpha,\beta}([0,1]^4)) = \mu_G([0,\alpha] \times [0,\beta^2] \times [0,\beta] \times [0,\beta]) = \alpha \beta^4 .\]
Let us check Kleppner's condition for 
$(\Gamma_{\alpha, \beta}, \sigma)$. We need only check the elements with finite conjugacy class, i.e.,\ those in the center of 
$\Gamma_{\alpha, \beta}$.
Thus, an element $(\alpha k_1, \beta^2 k_2, 0, 0)$ is $\sigma$-regular if and only if for all $(\alpha l_1, \beta^2 l_2, \beta l_3, \beta l_4) \in \Gamma_{\alpha, \beta}$ we have that
\begin{align*}
    1 &= \sigma((\alpha k_1, \beta^2 k_2, 0, 0),(\alpha l_1, \beta^2 l_2, \beta l_3, \beta l_4)) \overline{\sigma((\alpha l_1, \beta^2 l_2, \beta l_3, \beta l_4),(\alpha k_1, \beta^2 k_2, 0, 0))} \\
    &= \exp(-2\pi i (\alpha \beta k_1 l_3 + \beta^3 k_2 l_4)) .
\end{align*}
For this to happen, we need $\alpha \beta k_1 l_3 + \beta^3 k_2 l_4 \in \Z$ for all $l_3,l_4 \in \Z$. Hence, we see that if at least one of the numbers $\alpha \beta$ and $\beta^3$ is rational, then nontrivial $\sigma$-regular conjugacy classes exist. On the other hand, if both $\alpha \beta$ and $\beta^3$ are irrational, then Kleppner's condition is satisfied.

Our main result now states the following: Let $\alpha,\beta > 0$ such that both $\beta^3$ and $\alpha \beta$ are irrational numbers. If $\alpha \beta^4 < 1$ (resp.\ $\alpha \beta^4 > 1$), then there exists $\vect \in \Hip^\infty = \mathcal{S}(\R^2)$ such that 
$\pi(\Gamma_{\alpha, \beta}) \vect$
is a frame (resp.\ Riesz sequence) for $L^2(\R^2)$.
\end{example}

\section*{Acknowledgements}
U.E.\ gratefully acknowledges support from the The Research Council of Norway through project 314048. J.v.V. gratefully acknowledges support from the Research
Foundation - Flanders (FWO) Odysseus 1 grant G.0H94.18N and the Austrian Science Fund (FWF) project J-4445.

\bibliography{bibl}
\bibliographystyle{abbrv}

\end{document}